\documentclass[12pt]{amsart}
\usepackage[margin=1in,includeheadfoot]{geometry}
\usepackage{amsmath}
\usepackage{slashed}
\usepackage{graphicx}

\usepackage[margin=1in,includeheadfoot]{geometry}
\usepackage{amsmath}
\usepackage{slashed}
\usepackage{graphicx}
\usepackage{mathrsfs}
\usepackage{txfonts}
\usepackage{amssymb,dsfont}
\usepackage{enumerate}
\usepackage{slashed}
\usepackage{fancyhdr}
\usepackage{aliascnt}
\usepackage{pdfsync}
\usepackage{hyperref}

\usepackage{color}
\vfuzz2pt 
\hfuzz2pt 
\newtheorem{thm}{Theorem}[section]

\newtheorem{lem}[thm]{Lemma}
\newtheorem{prop}[thm]{Proposition}
\theoremstyle{definition}
\newtheorem{defn}[thm]{Definition}
\theoremstyle{remark}
\newtheorem{rem}[thm]{Remark}
\numberwithin{equation}{section}

\newcounter{stepnum}

\def\bee{\begin{eqnarray}}
\def\beee{\begin{eqnarray*}}
\def\eee{\end{eqnarray}}
\def\eeee{\end{eqnarray*}}

\def\ba{\begin{array}}
\def\ea{\end{array}}

\def\S{\mathbb S}
\def\R{\mathbb R}

\def\s{\sigma}

\def\dist{{\rm dist\,}}
%

\begin{document}

\title[Toda-syatem] 
{The blow-up analysis of an affine Toda system corresponding to superconformal minimal surfaces in ${\mathbb S}^4$}

\author[Liu]{Lei Liu}
\address{School of Mathematics and Statistics \& Hubei Key Laboratory of
Mathematical Sciences, Central China Normal University, Wuhan 430079,
P. R. China}%
\email{llei1988@mail.ustc.edu.cn}

\author[Wang]{Guofang Wang}%
\address{Albert-Ludwigs-Universit\"{a}t Freiburg, Math Inst., Ernst-Zermelo-Str. 1,  Freiburg, D-79104 Germany}%
\email{guofang.wang@math.uni-freiburg.de}%

\thanks{}

\subjclass[2010]{}
\keywords{Liouville equation, open and affine Toda system, blow up, global Pohozaev identity, local mass}

\date{\today}
\begin{abstract}
	In this paper, we study the blow-up analysis of an affine Toda system corresponding  to minimal surfaces into ${\mathbb S}^4$ \cite{Ferus-Pinkall-Pedit-Sterling}.
	This system is an integrable system which is a natural   generalization of 
	sinh-Gordon equation \cite{FG}. 
	By exploring a refined blow-up analysis in the bubble domain, we prove that the blow-up values are multiple of $8\pi$, which generalizes  the previous results proved in \cite{Spruck, OS, Jost-Wang-Ye-Zhou, Jevnikar-Wei-Yang} for the sinh-Gordon equation.
	Let $(u_k^1,u_k^2, u^3_k)$  be a  sequence of solutions of
	\begin{align*}
	-\Delta u^1&=e^{u^1}-e^{u^3}, \\
	-\Delta u^2&=e^{u^2}-e^{u^3}, \\
	-\Delta u^3&=-\frac{1}{2}e^{u^1}-\frac{1}{2}e^{u^2}+ e^{u^3},\\
	u^1+u^2+2u^3&=0,
	\end{align*} in $B_1(0)$, which has a uniformly bounded energy in $B_1(0)$, a uniformly bounded oscillation on $\partial B_1(0)$ and blows up at an isolated blow-up point $\{0\}$, then the local masses
	$(\s_1,\s_2, \s_3) \not = 0$ satisfy
	\begin{align*}
	\begin{array}{rcl}
	\sigma_1&=&m_1(m_1+3)+m_2(m_2-1)\\
	\sigma_2&=& m_1(m_1-1)+m_2(m_2+3) \\
	\sigma_3 &=& m_1(m_1-1)+m_2(m_2-1) \end{array}  \, \qquad
	\hbox { for some }
	\begin{array} {l}  (m_1, m_2)\in {\mathbb Z}  \hbox { with }\\
	m_1, m_2= 0 \hbox{ or } 1 \hbox{ mod } 4,  \\ m_1, m_2 = 2   \hbox { or } 3\hbox { mod } 4. \end{array}
	\end{align*}
	Here the local mass is defined by $ \s_i:=\frac 1{2\pi}\lim_{\delta\to 0}\lim_{k\to\infty}\int_{B_\delta(0)}e^{u_k^i}dx.$
\end{abstract}
\maketitle
\section{Introduction}

The compactness of a solution space of a nonlinear equation or a system plays an important role in the study of the existence of solutions.
For most of interesting geometric partial differential equations it is a challenging problem to understand their solution space. One of important methods is the so-called blow-up analysis, which gives us better information,
when  the solution space  is lack of  the compactness.
This method goes back at least to
the work of Sacks-Uhlenbeck \cite{Sacks-Uhlenbeck} on harmonic maps. Since then, there has been a lot of work on the blow-up analysis for harmonic maps, and also for minimal surfaces,
the  equation of the constant mean curvature surface, pseudo-holomorphic curves,
Yang-Miles fields and the Yamabe equation \cite{Bahri-Coron}, \cite{Brezis-Coron}, \cite{Struwe}, which play an  important role in geometric analysis. In this paper we are interested in the blow-up  analysis for the following  Liouville type system,
\begin{equation}\label{eq_0}
-\Delta u_i =\sum_{j=1}^N a_{ij} e^{u_j}, \quad i=1,2, \cdots,  N,
\end{equation}
where $A=(a_{ij})$ is an $N\times N$ matrix, which has  usually   a  geometric meaning, for example the Cartan matrix of $SU(N+1)$. The Toda system is an important object in integrable system \cite{LS, FG}, appears in many physical models \cite{Dunne}
and
 relates to many geometric objects,  holomorphic curve, minimal surfaces,
harmonic maps and  flat connections. See for example  \cite{BJ,BW,Do, EW, JW0}.

When $N=1$ and $a_{11}=1$, it is the Liouville equation
\begin{equation}
\label{liou} -\Delta u =2 e^u,
\end{equation}
which plays a fundamental role in many two dimensional  physical models and  also in the problem of prescribed Gaussian curvature \cite{Chang-Yang}, \cite{Chang-Gursky-Yang}, \cite{Ding-Chen}.
Its blow-up analysis has been carried out by Brezis-Merle \cite{Brezis},  Li \cite{Li} and  Li-Shafrir \cite{Li-Shafrir}.
When  $N\ge 2$ and $A$ is the Cartan matrix
for $SU(N+1)$, the blow-up analysis for \eqref{eq_0} was initiated by Jost-Wang \cite{Jost-Wang} and continued by Jost-Lin-Wang \cite{Jost-Lin-Wang},  Lin-Wei-Ye
 \cite{Lin-Wei-Ye} and  many  mathematicians. There also have been a lot of work on the blow-up analysis for the Liouville system with conical singularities. Here we just mention the work of Chen-Li \cite{Chen-Li2}, Lin-Wei-Ye \cite{Lin-Wei-Ye} and the
recent  work of Lin-Wei-Yang \cite{Lin-Wei-Zhang}. When $A$ is the Cartan matrix
for a simple group, for example $S(N+1)$, \eqref{eq_0} is called  a Toda  system, or a 2-dimensional open (or finite) Toda system.
There are also work on the Toda  system  for other simple groups. Here we remark that such a system could be embedded in  the Toda system for $SU(N+1)$.
The solutions of an  (open) Toda system are closely related to holomorphic curves, harmonic maps and flat connections,  see \cite{BJ,BW,Do, EW}.

Other than these open Toda systems
there are another  type of  Liouville systems closely related to constant mean curvature surfaces,
superconformal  minimal surfaces and harmonic maps
rather than holomorphic curves,
 which are usually called
affine Toda systems or periodic Toda systems.
The corresponding Liouville systems  (or Toda systems) have also different features, though they are very close.
The simplest equation is the well-known equation, especially in mathematical physics,
the sinh-Gordon equation
\begin{equation}\label{eq_1}
-\Delta u= e^u-e^{-u}.
\end{equation}
The blow-up analysis of \eqref{eq_1} was initiated by Spruck in \cite{Spruck} and  was continued by   Ohtsuka-Suzuki in \cite{OS}, where the existence of solutions  were also studied.  A  more precise blow-up analysis was obtained by Jost-Wang-Ye-Zhou \cite{Jost-Wang-Ye-Zhou}, where they showed that  the local masses of blow-up must be multiples of $8\pi$. The proof uses the  connection of the solutions of \eqref{eq_1} with
constant mean curvature surfaces and harmonic maps. A possible drawback of this   geometric  proof is
that it may work  only for \eqref{eq_1}, not for the system with variable coefficients, namely,
\begin{equation}\label{eq_1a}
-\Delta u= h_1e^u-h_2 e^{-u},
\end{equation}
where $h_1$ and $h_2$ are two positive functions.\footnote{We believe that this geometric  method still works for the  blow-up analysis of \eqref{eq_1a}
	by considering
	constant mean curvature equation and harmonic maps with a small suitable perturbation. One can see a related consideration in \cite{Jost-Lin-Wang}.} The precise blow up analysis for \eqref{eq_1a} has been carried out recently by Jevnikar-Wei-Yang \cite{Jevnikar-Wei-Yang} by using a powerful  analytic method.  A similar blow-up analysis for the Tzitzeica equation was carried out in \cite{Jevnikar-Yang}.
In this paper we want to generalize their analysis to the following system
\begin{align}
-\Delta w&=e^{w}-\frac{1}{2}e^{-w+\eta}-\frac{1}{2}e^{-w-\eta},\label{equ:01}\\
-\Delta \eta&=\frac{1}{2}e^{-w+\eta}-\frac{1}{2}e^{-w-\eta}.\label{equ:02}
\end{align}
It is clear that the sinh-Gordon equation \eqref{eq_1} is a special case of \eqref{equ:01}-\eqref{equ:02} with $\eta=0$.
This system is related  to  minimal surfaces into ${\mathbb S}^4$  without superminimal points   \cite{Ferus-Pinkall-Pedit-Sterling}. It was appeared  already in the work on integrable system by   Fordy-Gibbons  in \cite{FG}.
For the geometric formulation, we refer to the classical paper by Ferus-Pinkall-Pedit-Sterling
\cite{Ferus-Pinkall-Pedit-Sterling}. For System \eqref{equ:01}-\eqref{equ:02}
it is convenient to consider the following equivalent system:
Let $$u^1=-w+\eta,\ u^2=-w-\eta,\ u^3=w.$$
System \eqref{equ:01}-\eqref{equ:02} is now equivalent to the following system
\begin{align}
-\Delta u^1&=e^{u^1}-e^{u^3}, \label{eq:03}\\
-\Delta u^2&=e^{u^2}-e^{u^3}, \label{eq:04}\\
-\Delta u^3&=-\frac{1}{2}e^{u^1}-\frac{1}{2}e^{u^2}+ e^{u^3},\label{eq:05}\\
u^1+u^2+2u^3&=0,\label{eq:06}
\end{align} in $B_1(0)$.

Our main result in this paper is the following

\begin{thm}
	\label{thm:main}
	Let $u^k=(u_k^1, u_k^2, u_k^3)$ be a sequence of solutions of System \eqref{eq:03}-\eqref{eq:06}
	satisfying
	\begin{align}\label{eq:13}
	\sum_{i=1}^3 \int_{B_1(0)}e^{u^k_i}(x)dx \le C, \quad
	\sum_{i=1}^3 osc_{\partial B_1(0)}u_k^i\leq C
	\end{align}
	and  with $0$ being its only blow-up point in $B_1(0)$, i.e.
	\begin{align}\label{eq:14}
	\max_{K\subset\subset \overline{B_1(0)}\setminus \{0\}}\big\{u_k^1, u_k^2, u_k^3\big\}\leq C(K),\ \quad  \max_{\overline{B_1(0)}}\big\{
	u_k^1, u_k^2, u_k^3
	\big\}\to+\infty.
	\end{align}
	Denote
	\[\sigma_i :=\frac 1{2\pi}\lim_{\delta\to 0}\lim_{k\to\infty}\int_{B_\delta(0)}e^{u_k^i}dx.\]
	Then
	\[ (\s_1,\s_2, \s_3)\in \mathbf{V},
	\]
	where $\mathbf{V}$ is defined
	\begin{align}\label{equation:14}
	\mathbf{V}:=&\big\{(\sigma_1,\sigma_2,\sigma_3)\ | \, \sigma_i =4n_i, n_i\in {\mathbb N}\cup \{0\}, i=1,2,3;
	\notag \\
	&\ (\sigma_1-\sigma_3)^2+(\sigma_2-\sigma_3)^2=4(\sigma_1+\sigma_2+2\sigma_3)\big\}\backslash\{0,0,0\}.
	\end{align}
\end{thm}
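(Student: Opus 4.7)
The plan is to adapt the analytic framework that Jevnikar--Wei--Yang \cite{Jevnikar-Wei-Yang} developed for the sinh-Gordon equation \eqref{eq_1a} and that was used earlier by Jost--Wang--Ye--Zhou \cite{Jost-Wang-Ye-Zhou}, now to the coupled affine Toda system \eqref{eq:03}--\eqref{eq:06}. The key structural input is the linear constraint \eqref{eq:06}: it reduces three scalar equations to a system with two genuine degrees of freedom and tells us that the natural ``symmetric mass'', which plays the role of the sinh-Gordon mass here, is the weighted sum $\sigma_1+\sigma_2+2\sigma_3$. Throughout, the proof will combine three ingredients: a bubble selection, a classification of the entire limits, and a global Pohozaev identity.

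First I would perform the bubble analysis. Let $M_k := \max_i \max_{\overline{B_1(0)}} u_k^i \to \infty$ and rescale $v_k^i(y) := u_k^i(\lambda_k y) + 2\log \lambda_k$ with $\lambda_k = e^{-M_k/2}$. Using \eqref{eq:13}, \eqref{eq:14} and a Brezis--Merle alternative adapted to systems (in the spirit of \cite{Jost-Wang, Jost-Lin-Wang}), along a subsequence the rescaled triple converges in $C^2_{\mathrm{loc}}(\mathbb{R}^2 \setminus S)$ for a finite blow-up set $S$ to a limit $(V^1,V^2,V^3)$ satisfying either the full affine Toda system on $\mathbb{R}^2$ or, if some exponentials vanish in the limit, a Liouville or sinh-Gordon subsystem. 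The next, and most delicate, step is the classification of finite-energy entire limits. For the full system, the integrability structure of Ferus--Pinkall--Pedit--Sterling \cite{Ferus-Pinkall-Pedit-Sterling, FG} identifies $(V^1,V^2,V^3)$ with the conformal factor of a superconformal minimal immersion $\mathbb{C}\to\mathbb{S}^4$ whose two Hopf-type holomorphic differentials are polynomial of degrees $m_1,m_2\ge 0$; a residue computation then yields the bubble masses $\tau_i$ in the form displayed in the abstract, and in particular verifies both that each $\tau_i$ is a multiple of $4$ and that $(\tau_1-\tau_3)^2+(\tau_2-\tau_3)^2 = 4(\tau_1+\tau_2+2\tau_3)$. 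For degenerate (Liouville/sinh-Gordon) limits, the same conclusion is obtained from the classifications in \cite{Jost-Wang-Ye-Zhou, Jevnikar-Wei-Yang}.

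Next I would derive the global Pohozaev identity. Multiplying \eqref{eq:03}, \eqref{eq:04}, \eqref{eq:05} by $x\cdot\nabla u_k^1$, $x\cdot\nabla u_k^2$ and $2(x\cdot\nabla u_k^3)$ respectively, summing, integrating over $B_r$, and then using \eqref{eq:06} to cancel cross terms between gradients of different components, one arrives at an identity relating the weighted integral $\int_{B_r}\bigl(e^{u_k^1}+e^{u_k^2}+2e^{u_k^3}\bigr)dx$ to a boundary term which is quadratic in the radial derivatives. Sending $k\to\infty$ and then letting $r\to 0$, the boundary gradient terms converge to the quadratic form $(\sigma_1-\sigma_3)^2+(\sigma_2-\sigma_3)^2$ while the volume term produces $4(\sigma_1+\sigma_2+2\sigma_3)$; this yields the quadratic constraint in the definition of $\mathbf{V}$. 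To pass from the single-bubble quantization to the full statement I would perform the standard neck/annulus analysis, controlling each annular region between successive bubbles by a spherical Harnack inequality or fast-decay estimate as in \cite{Jevnikar-Wei-Yang}, and iterate the Pohozaev identity bubble by bubble.

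The main obstacle is the neck analysis under the coupling of the three equations. Unlike for a single Liouville or sinh-Gordon equation, an intermediate annulus here can exhibit a ``fake'' blow-up, where $e^{u^1}$ and $e^{u^3}$ (or $e^{u^2}$ and $e^{u^3}$) concentrate with nearly cancelling contributions in \eqref{eq:03}, so that no spherical Harnack inequality is available from any single scalar equation. I expect to handle this by applying the weighted Pohozaev identity above to each annulus, exploiting the fact that the symmetric combination $e^{u^1}+e^{u^2}+2e^{u^3}$ is sign-definite (hence admits an Alexandrov--Bol-type comparison), and then running induction on the number of bubbles with inductive hypothesis that $\mathbf{V}$ is closed under the additive operation induced by bubble superposition. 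The non-triviality $(\sigma_1,\sigma_2,\sigma_3)\neq(0,0,0)$ is immediate from \eqref{eq:14} together with standard energy concentration at an isolated blow-up point.
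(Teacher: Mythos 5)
Your skeleton (bubble selection, classification of entire limits, Pohozaev identity, neck analysis) matches the paper's, but two of your key steps do not work as stated. First, the constraint \eqref{eq:06} forces $u_k^1+u_k^2+2u_k^3=0$ at every point, so at a blow-up scale at least one rescaled component must tend to $-\infty$; the ``full affine Toda system on $\mathbb{R}^2$'' therefore never arises as a finite-energy entire blow-up limit, and no classification of such solutions is needed (or available). The limits are always either the Liouville equation or the \emph{open} subsystem \eqref{System2}, classified analytically by embedding into the $SU(4)$ Toda system (Theorem \ref{thm:classification-1}); the geometric route via polynomial Hopf differentials that you propose is exactly what the paper avoids, since it does not survive the neck/iteration analysis nor perturbations of the coefficients. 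The formula with $(m_1,m_2)$ in the abstract parametrizes the set $\mathbf{V}$ of possible \emph{accumulated} masses, not the mass of a single bubble.

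Second, and more seriously, your neck analysis misses the central difficulty that distinguishes this system from sinh-Gordon. With three local masses, it can happen that as $r$ crosses a slow-decay scale $s_k$ one mass stays fixed while \emph{two} masses increase; the single quadratic Pohozaev relation then has a one-parameter family of solutions and cannot by itself force the increments to be multiples of $4$. An Alexandrov--Bol comparison for $e^{u^1}+e^{u^2}+2e^{u^3}$ does not resolve this, and $\mathbf{V}$ is not simply ``closed under bubble superposition.'' The paper's resolution is to rescale a second time at the slow-decay scale, obtaining a \emph{singular} bubble --- a solution of the Liouville equation or of \eqref{System2} with a Dirac source $-4\pi b\,\delta_0$ whose weight is determined by the masses already accumulated (see \eqref{eq_addf1}, \eqref{eq_add_f2}) --- then invoking the Lin--Yang--Zhong classification of such singular solutions to get quantization by $4$, and proving via the oscillation estimate (Lemma \ref{lem:02}), the ODE for the spherical averages \eqref{equation:12}--\eqref{equation:13}, and the local Pohozaev identity that there is no energy loss in the intervening annulus. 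Without this singular-bubble step (or a substitute for it), your induction does not close.
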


\

One can easily show that
\begin{align}\label{equation:14_a}
\mathbf{V}=&\left\{
\left(\begin{array}{c}\sigma_1\\ \sigma_2\\ \sigma_3\end{array}\right)
\in{\mathbb N^3_0}\, \bigg|\,
\begin{array}{rcl}
\sigma_1&=&m_1(m_1+3)+m_2(m_2-1)\\
\sigma_2&=& m_1(m_1-1)+m_2(m_2+3) \\
\sigma_3 &=& m_1(m_1-1)+m_2(m_2-1) \end{array}  \, \& \,
\begin{array} {l}  (m_1, m_2)\in {\mathbb Z} \\
m_1, m_2= 0 \hbox{ or } 1 \hbox{ mod } 4, \hbox { or } \\ m_1, m_2 = 2   \hbox { or } 3\hbox { mod } 4 \end{array}
\right\}
\backslash
\left(\begin{array}{c} 0\\0\\0\end{array}\right).
\end{align}
Here ${\mathbb N_0}={\mathbb N}\cup\{0\}$.
The first  5 possibilities of $(\s_1, \s_2, \s_3)$ are
\[ (0,0, 4), (4,0,0), (4,4, 0), (4,4,12) , (12,12,4).
\]
Since the roles of $\s_1$ and $\s_2$ are symmetric, we have not listed the obvious possibilities by changing $\s_1$ and $\s_2$.  When $u_1=u_2$, it is easy to see that  System \eqref{eq:03}--\eqref{eq:06} is reduced to the sinh-Gordon equation. The above result generalizes the previous results given in
\cite{Spruck, OS, Jost-Wang-Ye-Zhou, Jevnikar-Wei-Yang}  for sinh-Gordon equation.

The result holds also   for
System \eqref{eq:03}--\eqref{eq:05} with variable coefficients.
We remark that like the sinh-Gordon equation one may expect all cases listed in  $\mathbf{V}$ could occur. For the related work we refer to the work of Esposito-Wei   \cite{E_W} and Grossi-Pistoia \cite{G_P} and reference therein. We leave this problem to the interested reader.
With  Theorem \ref{thm:main} one can obtain the existence of solutions for system \eqref{equ:01}-\eqref{equ:02} under suitable conditions.  See for example \cite{DJLW}, \cite{Jost-Lin-Wang} and  the work of Malchiodi for an approach to
the existence \cite{Malchiodi}.

This paper follows closely the arguments given in \cite{Jevnikar-Wei-Yang}.
We do blow-up analysis for our system and obtain blow-up limits which are solution of
either the Liouville equation \eqref{liou} or the following system
\begin{align}\label{System2}
\begin{cases}
-\Delta v_1 &=   e^{v_1}-e^{v_2} \\
-\Delta v_2 &=   -\frac{1}{2} e^{v_1}+e^{v_2}.
\end{cases}\end{align}
 Except this  complexity, there is one big difference. In order to
 describe this difference,
let us   first  recall  the key  ideas in \cite{Jevnikar-Wei-Yang} for the  sinh-Gordon equation. By using a selection process for describing the blow-up situations in the framework of prescribed curvature problems (see for example \cite{Chen-Lin,Li}), the key point in \cite{Jevnikar-Wei-Yang} is to show that in each bubbling disk
 at least local mass  ($\sigma_1$ or $\sigma_2$) is a multiple of $4$.
 More precisely, let $B(0,\lambda_k)$ be a bubbling disk. By using a standard blow-up analysis, one can easily get that there exists a sequence of numbers $r^1_k\to 0$ with $r^1_k=o(1)\lambda_k$, such that $$(\sigma_1(r^1_k),\sigma_2(r^1_k))=(4,0)+o(1), \quad \hbox{ or }\ (0,4)+o(1),$$ which roughly means that in $B_{r^1_k}(0)$ one component of $u_k$ converges to a solution of the
Liouville equation while the other  converges to $-\infty$. Here
$\sigma_i(r) = \frac 1 {2\pi} \int_{B_r} e^{u_i}$ is the so-called local mass (or local energy) of $u^i$. As $r$ increases from $r^1_k$ to $\lambda_k$, there are two possibilities: either

(1) $(\sigma_1(\lambda_k),\sigma_2(\lambda_k))=(4,0)+o(1)$ or $(0,4)+o(1)$, or

(2) there exists $s_k\to 0$ with ${s_k}\slash {r^1_k}\to +\infty$ and  $s_k=o(1)\lambda_k$, such that one component of the solutions has slow decay (for the definition, see Definition \ref{def-fast-decay} below).
One only needs to care about case (2). In this case, the authors in \cite{Jevnikar-Wei-Yang} managed to show that as $r$ increases across  $s_k$, one of
$\sigma_i $ ($i=1,2$)  almost does not change
 while the other increases at least a  positive quantity.
More precisely they proved that there exists $r^2_k\to 0$ with $r^2_k=o(1)\lambda_k$ and  ${r^2_k}\slash{s_k}\to +\infty$ such that $$\sigma_1(r^2_k)-\sigma_1(r^1_k)=o(1),\ \ \sigma_2(r^2_k)-\sigma_2(r^1_k)\geq \delta>0,\  \hbox{ for some small } \delta>0.$$
Then  a local  Pohozaev identity
\begin{equation*}\label{equation:pohozaev-sinh-Gordon}
(\sigma_1(r^2_k)-\sigma_2(r^2_k))^2=4(\sigma_1(r^2_k)+\sigma_2(r^2_k) )+o(1)
\end{equation*}
for the sinh-Gordon equation implies that
$$\sigma_1(r^2_k)-\sigma_1(r^1_k)=o(1),\ \ \sigma_2(r^2_k)-\sigma_2(r^1_k)=4n+o(1),\ \ n\in\mathds{N}.$$ This means the local energies  increase always a multiple of $4$, whenever   $r$ increases across such an $s_k$.
 Then the results follow from a careful iteration argument.

For our system \eqref{eq:03}--\eqref{eq:05} we also have  a similar
 local  Pohozaev identity
\begin{align}
\big(\sigma_k^1(r_k)-\sigma_k^3(r_k)\big)^2+ \big(\sigma_k^2(r_k)-\sigma_k^3(r_k)\big)^2\notag  =4\big(\sigma_k^1(r_k)+\sigma_k^2(r_k)+2\sigma_k^3(r_k)\big)+o(1).
 \end{align} See Section \ref{sec:basic-lemmas} below. This identity  plays also a crucial role in our proof. However, alone with  this identity is not enough
 for our system, since we have 3 local energies. It could happen that
  when $r$ increases,
 one of local energies keeps almost no change, other two local  energies increase
  at least a  positive quantity.
  One  could not use the local Pohozaev identity  to conclude that these two
   energies must increase a multiple of $4$.  In order to deal with this problem,
   we manage to do another  blow-up  near $s_k$ and obtain a ``singular bubble",
   which is  either a solution of the Liouville equation  \eqref{liou} or
   the system \eqref{System2}, but with a singular source at the origin. See \eqref{eq_addf1} and \eqref{eq_add_f2}. The classification result in the recent work of Lin-Yang-Zhong \cite{Lin-Yang-Zhong} tells us that the corresponding
   local energies are a multiple of $4$. Moreover, we
   show that between the previous blow-up and  the singular bubble there is no energy loss. In order to show this we crucially  use an oscillation estimate Lemma \ref{lem:02} and the local Pohozaev identity. This is the main difference to the paper of Jevniker-Wei-Yang  \cite{Jevnikar-Wei-Yang}.
   This proof's ideas come from the study of the harmonic maps, where one proves
   the so-called energy identity, see for example the work of Ding-Tian \cite{Ding-T}.

   We want to emphasize that in the above blow-up analysis the classification of all entire solution of
   the blow-up limits plays a crucial role.
   The classification for the Liouville equation \eqref{liou} was given by Chen-Li \cite{Chen-Li},
   for \eqref{eq_0} for $SU(N+1)$  by Jost-Wang \cite{Jost-Wang}. As mentioned above
   we need also
   the classification of entire solutions to the Liouville equation \eqref{liou} and system \eqref{System2} with  a singular source. For such a result see the work of  Lin-Wei-Ye \cite{Lin-Wei-Ye} and Lin-Yang-Zhong \cite{Lin-Yang-Zhong} or Appendix  below.

   Our methods work at least also for the affine Toda system for $SU(4)$, which includes
   the Tzitzeica equation as a special case. See Section \ref{sec_last} below. The blow-up analysis of the Tzitzeica equation \cite{Tzitzeca-2} was carried out by
   Jevniker-Yang \cite{Jevnikar-Yang} recently.

\

The rest of the paper is organized as follows. In Section \ref{sec:basic-lemmas}
	we establish a selection process  for finite number of bubbling areas, the oscillation estimate outside the blow-up set and the local Pohozaev identities
	corresponding to the blow-up analysis of system \eqref{equ:03}-\eqref{equ:06},
	In Section \ref{sec:local-blowup-behavior}, we prove a local blow-up behavior Theorem \ref{thm:01} where we need to explore more careful blow-up analysis in the bubbling areas. With the help of local  Theorem \ref{thm:01}, by using a standard argument of combining the blow-up areas and a global Pohozaev identity, we give the proof of our main Theorem \ref{thm:main}  in Section \ref{sec:main-theorem}. In Section  \ref{sec_last}, we discuss another systems, affine Toda system for $SU(4)$. The blow-up analysis holds for this system.
	 In Section \ref{sec:appendix}, we recall  two classification theorems which are used in our proof.

\section{Some basic lemmas}\label{sec:basic-lemmas}

\

In this section, we prove the selection of bubbling areas, a crucial oscillation estimate and the local  Pohozaev identity, which play important roles in our later proof.

Let us repeat the system and the assumptions we will use in this paper
\begin{align}
  -\Delta u^1&=e^{u^1}-e^{u^3}, \label{equ:03}\\
  -\Delta u^2&=e^{u^2}-e^{u^3}, \label{equ:04}\\
  -\Delta u^3&=-\frac{1}{2}e^{u^1}-\frac{1}{2}e^{u^2}+ e^{u^3},\label{equ:05}\\
  u^1+u^2+2u^3&=0,\label{equ:06}
\end{align} in $B_1(0)$.
We consider a sequence of solutions $u_k=(u^1_k,u^2_k,u^3_k)$ of \eqref{equ:03}-\eqref{equ:06} with a uniformly bounded energy
\begin{align}\label{equ:07}
 \sum_{i=1}^3\int_{B_1(0)}e^{u^i_k(x)}dx<C
\end{align} and  with $0$ being its only blow-up point in $B_1(0)$, i.e.
\begin{align}\label{equ:08}
 \max_{i=1,2,3}\max_{K\subset\subset \overline{B_1(0)}\setminus \{0\}}u^i_k\leq C(K),\ \quad  \max_{i=1,2,3}\max_{\overline{B_1(0)}}u^i_k\to+\infty,\ \quad \max_{i=1,2,3}osc_{\partial B_1}u^i_k\leq C.
\end{align}

\

We first establish a lemma for the selection of bubbling areas.

\begin{lem}\label{lem:01}
Let $u_k=(u^1_k,u^2_k,u^3_k)$ be a sequence of \eqref{equ:03}-\eqref{equ:06} with \eqref{equ:07} and \eqref{equ:08}. Then there exists a sequence of finite points $\Sigma_k:=\{x^1_k,...,x^m_k\}$ and a sequence of positive numbers $\lambda^1_k,...,\lambda^m_k$ such that
\begin{itemize}
  \item[(1)] $x^j_k\to 0$  and $ \lambda^j_k\to 0$ as $k\to +\infty,\ \ \lambda^j_k\leq \frac{1}{2}\dist(x^j_k,\Sigma_k\setminus \{x^j_k\}), \ j=1,...,m$;

  \

  \item[(2)] $B_{\lambda^j_k}(x^j_k)\cap B_{\lambda^l_k}(x^l_k)=\emptyset$, for $1\leq j,l\leq m,\ j\neq l$;

  \

  \item[(3)] $\max_{i=1,2,3}u^i_k(x^j_k)=\max_{i=1,2,3}\max_{B_{\lambda^j_k}(x^j_k)}u^i_k\to +\infty$ as $k\to +\infty,\ j=1,...,m;$

  \

  \item[(4)] Denote $\epsilon^j_k:=\exp(-\frac{1}{2}\max_{i=1,2,3}u^i_k(x^j_k))$. Then $\frac{\lambda^j_k}{\epsilon^j_k}\to +\infty$  as $k\to +\infty,\ j=1,...,m;$

      \

  \item[(5)] In each $B_{\lambda^j_k}(x^j_k)$, we define the scaled functions $$v^i_k(x):=u^i_k(x^j_k+\epsilon^j_kx)+2\log\epsilon^j_k,\ \ i=1,2,3.$$ Then one of the following alternatives holds:
      \begin{itemize}
        \item[(5-1)] Two components of them $v^1_k$ and $v^3_k$ (or $v^2_k$ and $v^3_k$) converge to a solution of Toda system \eqref{equ:10} in $C^2_{loc}(\R^2)$, while the left one converges to $-\infty$ over all compact subsets of $\R^2$;

            \

        \item[(5-2)] Two components of them $v^1_k$ and $v^2_k$ converge to solutions of Liouville equation in $C^2_{loc}(\R^2)$, while the left one converges to $-\infty$ over all compact subsets of $\R^2$;

            \

        \item[(5-3)] One component  converges to a solution of Liouville equation in $C^2_{loc}(\R^2)$, while the left ones converge to $-\infty$ over all compact subsets of $\R^2$;
      \end{itemize}

      \

  \item[(6)] There exists a constant $C$ independent of $k$ such that $$\max_{i=1,2,3}u^i_k(x)+2\log \dist(x,\Sigma_k)\leq C,\ \ \forall \ x\in B_1. $$
\end{itemize}
\end{lem}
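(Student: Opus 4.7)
The plan is to carry out the standard Schoen-type iterative bubble-selection procedure (cf.\ \cite{Chen-Lin, Li, Jost-Lin-Wang}), specialised to the algebraically constrained three-component system \eqref{equ:03}--\eqref{equ:06}. For the first bubble I would pick $x^1_k\in\overline{B_1}$ realising $M^1_k:=\max_i\max_{\overline{B_1}}u^i_k=\max_iu^i_k(x^1_k)\to+\infty$, set $\epsilon^1_k:=e^{-M^1_k/2}$, and form the rescaled functions $v^i_k(y):=u^i_k(x^1_k+\epsilon^1_k y)+2\log\epsilon^1_k$. Then $v^i_k\le 0$ on the expanding ball $B_{1/(2\epsilon^1_k)}(0)$ with $\max_iv^i_k(0)=0$, and each $v^i_k$ solves the same system with right-hand side uniformly bounded on compacts. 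Standard elliptic regularity yields subsequential $C^2_{\rm loc}$ limits.

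The three alternatives in item (5) come directly from the constraint \eqref{equ:06}, which after rescaling reads
\[ v^1_k+v^2_k+2v^3_k\equiv 8\log\epsilon^1_k\to -\infty, \]
so at least one component of the limit must be identically $-\infty$. If $v^1_k$ and $v^3_k$ stay bounded near $0$ while $v^2_k\to-\infty$ locally uniformly, then $e^{v^2_k}$ drops out of \eqref{equ:03} and \eqref{equ:05} and the limit $(v^1,v^3)$ solves exactly \eqref{System2} after relabelling, giving (5-1); the symmetric case $v^1_k\leftrightarrow v^2_k$ is identical. If instead $v^3_k\to-\infty$ but $v^1_k,v^2_k$ survive, then \eqref{equ:03}--\eqref{equ:04} decouple into two Liouville equations, which is (5-2). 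The remaining possibility---only one component survives---gives (5-3). The normalisation $\max_iv^i_k(0)=0$ prevents the surviving components from being trivial, and the classification results of \cite{Chen-Li} and \cite{Lin-Yang-Zhong} recalled in the appendix describe the limits completely; in particular each admissible profile carries a positive quantum of total mass $\sum_i\int_{\R^2}e^{v^i}$.

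Next I would define $\lambda^1_k$ as the largest radius $\le 1/2$ on which the maximum $\max_i\max_{B_{\lambda^1_k}(x^1_k)}u^i_k$ is still attained at $x^1_k$; the classification in the previous paragraph forces $\lambda^1_k/\epsilon^1_k\to+\infty$, which is item (4). If $u^i_k$ stays locally bounded on $\overline{B_1}\setminus B_{\lambda^1_k}(x^1_k)$ the process stops with $m=1$; otherwise I would iterate on the complement, at the $(m+1)$-th step selecting $x^{m+1}_k$ at the next maximum, rescaling to classify it as in (5), and shrinking all the $\lambda^j_k$ if necessary to enforce the separation (1) and disjointness (2). The quantized lower bound on every bubble's energy together with the global bound \eqref{equ:07} caps the number of bubbles, so the iteration terminates after finitely many steps and yields the finite set $\Sigma_k$.

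Finally, item (6) follows by contradiction. If it failed there would exist $y_k\in\overline{B_1}$ with $\max_iu^i_k(y_k)+2\log\dist(y_k,\Sigma_k)\to+\infty$. Rescaling at $y_k$ by $e^{-\max_iu^i_k(y_k)/2}$ on the disk of radius $\tfrac12\dist(y_k,\Sigma_k)$ would, via the very same blow-up classification, produce another nontrivial bubble disjoint from $\Sigma_k$, contradicting the termination established in the previous paragraph. \emph{The main obstacle} is precisely the classification step underlying the three alternatives (5-1)--(5-3): one must rule out degenerate limits in which a surviving component collapses to a trivial profile, and one must correctly identify the appearance of the coupled system \eqref{System2}. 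This is where the constraint \eqref{equ:06} and the inputs from \cite{Chen-Li, Lin-Yang-Zhong} really do the work; once those are in hand, the remaining bookkeeping for items (1)--(4) and (6) is entirely routine.
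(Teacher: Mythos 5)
Your proposal is correct and follows essentially the same route as the paper: a Schoen-type weighted maximum selection, rescaling at each selected point, and a classification of the blow-up limit into the alternatives (5-1)--(5-3) via the constraint \eqref{equ:06} (the paper runs a five-case analysis of the values $u^i_k(x^1_k)$ at the chosen point, which amounts to your cleaner observation that $v^1_k+v^2_k+2v^3_k=8\log\epsilon^1_k\to-\infty$ forces at least one rescaled component to die). The only points to tighten are that the later centres must be taken as maxima of the weighted function $u^i_k(x)+2\log(d_k-|x-y_k|)$ on $B_{d_k}(y_k)$, so that the rescaled functions are bounded above on expanding balls (your phrase ``the next maximum'' does not by itself guarantee this, and your definition of $\lambda^1_k$ as the largest radius on which the maximum is attained at $x^1_k$ gives $\lambda^1_k=\tfrac12$ for the first, globally maximal, point), and that the paper obtains item (6) for free as the termination criterion of the iteration rather than by a separate contradiction argument.
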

\begin{proof} The proof is becoming more or less standard.
We divide the proof into three steps.

\

\noindent
\textbf{Step 1:} Construction of  the first point $x^1_k$ and $\lambda^1_k$.

\

Let $x^1_k$ be the maximal point such that $$\max_{i=1,2,3}u^i_k(x^1_k)=\max_{i=1,2,3}\max_{x\in \overline{B_1}}u^i_k(x).$$ By assumption, it is clearly that $x^1_k\to 0$ and $\max_{i=1,2,3}u^i_k(x^1_k)\to +\infty$ as $k\to +\infty$. According to \eqref{equ:06}, we know $$u^1_k(x^1_k)+u^2_k(x^1_k)+2u^3_k(x^1_k)=0,$$ which implies that one of the following cases happens:
\begin{itemize}
  \item[(I-1)] $(u^1_k(x^1_k),u^2_k(x^1_k),u^3_k(x^1_k))\to (+\infty,-\infty,+\infty)$ and $|u^1_k(x^1_k)-u^3_k(x^1_k)|\leq C$;
  \item[(I-2)] $(u^1_k(x^1_k),u^2_k(x^1_k),u^3_k(x^1_k))\to (+\infty,-\infty,+\infty)$ and $|u^1_k(x^1_k)-u^3_k(x^1_k)|\to +\infty $;
  \item[(I-3)] $(u^1_k(x^1_k),u^2_k(x^1_k),u^3_k(x^1_k))\to (+\infty,+\infty, -\infty)$ and $|u^1_k(x^1_k)-u^2_k(x^1_k)|\leq C$;
  \item[(I-4)] $(u^1_k(x^1_k),u^2_k(x^1_k),u^3_k(x^1_k))\to (+\infty,+\infty, -\infty)$ and $|u^1_k(x^1_k)-u^2_k(x^1_k)|\to +\infty$;
  \item[(I-5)] There exists one component $u^i_k(x^1_k)\to +\infty$, $i\in\{1,2,3\}$ while the other ones have uniformly upper bound, i.e. $u^l_k(x^1_k)\leq C,\ l\neq i$.
\end{itemize}
Recall that we do not list the cases which just change $1$ and $2$.

Now, we will show  that case (I-1)   leads to  conclusion (5-1),  (I-3)  to (5-2), while  cases (I-2),\ (I-4),\ (I-5)  lead to  conclusion (5-3).
We only  consider  case (I-1), since the other cases are similar or easier. Let $$v^i_k(x):=u^i_k(x^1_k+\epsilon^1_kx)+2\log\epsilon^1_k,\ \ i=1,2,3,$$ where $$\epsilon^1_k:=\exp(-\frac{1}{2}\max_{i=1,2,3}u^i_k(x^1_k))=\exp(-\frac{1}{2}\max_{i=1,3}u^i_k(x^1_k)).$$ Then it is easy to see that $$\max_{i=1,2,3}v^i_k(x)\leq 0,\ \ -C\leq v^1_k(0)\leq 0,\ \ -C\leq v^3_k(0)\leq 0,\ \ v^2_k(0)\to -\infty.$$ By the standard theory of
the elliptic equation we know that $v^2_k(x)\to -\infty$ in $L^\infty_{loc}(\R^2)$ and $(v^1_k, v^3_k)\to (v^1,v^3)$ in $C^2_{loc}(\R^2)$, where $(v^1, v^3)$ satisfies
\begin{align}\label{equ:10}
\begin{cases}
  -\Delta v^1&=e^{v^1}-e^{v^3},\\
  -\Delta v^3&=-\frac{1}{2}e^{v^1}+e^{v^3},
\end{cases} \quad \hbox{ in } \R^2.
\end{align} By the classification Theorem \ref{thm:classification-1} below, we have
\begin{equation}\label{equ:11}
\int_{\R^2}e^{v^1}dx=32\pi,\ \ \int_{\R^2}e^{v^3}dx=24\pi \end{equation} and
\begin{equation}\label{equ:12}
v^1(x)=-4\log |x|+O(1),\ \ v^3(x)=-4\log |x|+O(1)\ as \ |x|\to\infty.\end{equation}
Then it is not hard to take a sequence $R_k\to +\infty$ such that $R_k\epsilon^1_k\to 0$ and $$\max_{i=1,2,3}v^i_k(x)+2\log |x|\leq C,\ \ |x|\leq R_k.$$ Let $\lambda_k^1:=\frac{1}{2}R_k\epsilon^1_k$. Then ${\lambda_k^1}\slash {\epsilon^1_k}\to +\infty$ and $$\max_{i=1,2,3}u^i_k(x)+2\log |x-x^1_k|\leq C,\ \ |x-x^1_k|\leq 2\lambda^1_k.$$

\

\noindent
\textbf{Step 2:} Construction of  the second point $x^2_k$ and $\lambda^2_k$.

\

Now we consider the function $$\max_{i=1,2,3}u^i_k(x)+2\log |x-x^1_k|.$$ If it is uniformly bounded in $\overline{B_1}$, then the lemma holds for $m=1$. Otherwise, there exists $y_k\in \overline{B_1}$ such that $$\max_{i=1,2,3}u^i_k(y_k)+2\log |y_k-x^1_k|=\max_{i=1,2,3}\max_{x\in \overline{B_1}}u^i_k(x)+2\log |x-x^1_k|\to +\infty.$$ Let $d_k:=\frac{1}{2}|x^1_k-y_k|$ and $$w^i_k(x):=u^i_k(x)+2\log (d_k-|x-y_k|),\ \ i=1,2,3.$$ Let $p_k$ be a maximal point such that $$\max_{i=1,2,3}w^i_k(p_k)=\max_{i=1,2,3}\max_{x\in \overline{B_{d_k}(y_k)}}w^i_k(x).$$ Then $$\max_{i=1,2,3}w^i_k(p_k)\to +\infty,$$  since $\max_{i=1,2,3}w^i_k(y_k)\to +\infty.$ In view of  $$\sum_{i=1}^3w^i_k(p_k)=6\log (d_k-|p_k-y_k|)\leq 0,$$ similar to
 \textbf{Step 1}, we have that one of the following cases happens:
\begin{itemize}
  \item[(II-1)] $(w^1_k(p_k),w^2_k(p_k),w^3_k(p_k))\to (+\infty,-\infty,+\infty)$ and $|w^1_k(p_k)-w^3_k(p_k)|\leq C$.
  \item[(II-2)] $(w^1_k(p_k),w^2_k(p_k),w^3_k(p_k))\to (+\infty,-\infty,+\infty)$ and $|w^1_k(p_k)-w^3_k(p_k)|\to +\infty $.
  \item[(II-3)] $(w^1_k(p_k),w^2_k(p_k),w^3_k(p_k))\to (+\infty,+\infty, -\infty)$ and $|w^1_k(p_k)-w^2_k(p_k)|\leq C$.
  \item[(II-4)] $(w^1_k(p_k),w^2_k(p_k),w^3_k(p_k))\to (+\infty,+\infty, -\infty)$ and $|w^1_k(p_k)-w^2_k(p_k)|\to +\infty$.
  \item[(II-5)] There exists one component $w^i_k(p_k)\to +\infty$, $i\in\{1,2,3\}$ while the other ones have uniformly upper bound, i.e. $w^l_k(p_k)\leq C,\ l\neq i$.
\end{itemize}

Next, we will show that the existence of $x^2_k$ and $\lambda_k^2$ for case (II-1), which satisfy the conclusions of the lemma. The other cases are similar or easier.
Let $$l_k:=\frac{1}{2}(d_k-|p_k-y_k|)\  \hbox{ and }  \ \epsilon_k:= \exp(-\frac{1}{2}\max_{i=1,2,3}u^i_k(p_k)).$$
From the fact that $\max_{i=1,2,3}w^i_k(p_k)\to +\infty$, we have $$\epsilon_k\to 0\quad  \hbox{ and }\ \quad  \frac{l_k}{\epsilon_k}\to +\infty.$$  We know  that $\forall x\in B_{l_k}(p_k)$, there hold $$\max_{i=1,2,3}u^i_k(x)+2\log (d_k-|x-y_k|)\leq \max_{i=1,2,3}w^i_k(p_k)= \max_{i=1,2,3}u^i_k(p_k)+2\log (2l_k)$$ and $$d_k-|x-y_k|\geq d_k-|p_k-y_k|-|x-p_k|\geq l_k.$$ It follows that  $$\max_{i=1,2,3}u^i_k(x)\leq \max_{i=1,2,3}u^i_k(p_k)+2\log 2,\ \ \forall x\in B_{l_k}(p_k) .$$ Now, let $$v^i_k(x):=u^i_k(p_k+\epsilon_kx)+2\log\epsilon_k,\ \ |x|\leq\frac{l_k}{\epsilon_k},\ \  i=1,2,3.$$ For  case (II-1), we can easily see that $$\max_{i=1,2,3}v^i_k(x)\leq 2\log 2,\ \ -C\leq v^1_k(0)\leq 0,\ \ -C\leq v^3_k(0)\leq 0,\ \ v^2_k(0)\to -\infty. $$ By the standard theory of Laplacian operator, we know that $v^2_k(x)\to -\infty$ in $L^\infty_{loc}(\R^2)$ and $(v^1_k, v^3_k)\to (v^1,v^3)$ in $C^2_{loc}(\R^2)$ where $(v^1, v^3)$ satisfies \eqref{equ:10}, \eqref{equ:11} and \eqref{equ:12}. By \eqref{equ:12}, we assume $$v^1(\overline{x}^1)=\max_{x\in\R^2}v^1(x),\ \ v^3(\overline{x}^3)=\max_{x\in\R^2}v^3(x),\ \quad  \hbox{  for
	 some }\ \overline{x}^1,\ \overline{x}^3\in \R^2.$$ Moreover, we can choose a sequence of $R_k\to +\infty$ such that $R_k=o(1){l_k}\slash {\epsilon_k}$ and $$v^i_k(x)+2\log |x|\leq C,\ \ |x|\leq R_k,\ i=1,2,3$$ and $$\int_{R_k}e^{v^1_k(x)}dx=32\pi+o(1),\ \ \int_{R_k}e^{v^3_k(x)}dx=24\pi+o(1).$$ Let $q_k\in B_{\frac{1}{2}R_k}(0)$ such that $$\max\{v^1_k(\overline{x}^1+q_k), v^3_k(\overline{x}^3+q_k) \}=\max\{\max_{x\in \overline{B_{\frac{1}{2}R_k}(0)}}v^1_k(\overline{x}^1+x),\max_{x\in \overline{B_{\frac{1}{2}R_k}(0)}} v^3_k(\overline{x}^3+q_k) \}.$$ Now set
\begin{align*}
x^2_k:=
\begin{cases}
  p_k+\epsilon_k(\overline{x}^1+q_k), & \mbox{if } v^1(\overline{x}^1)\geq v^3(\overline{x}^3), \\
  p_k+\epsilon_k(\overline{x}^3+q_k), & \mbox{otherwise},
\end{cases}
\end{align*} and $$\lambda_k^2:=\frac{1}{4}R_k\epsilon_k.$$ One can check that $(x^2_k,\lambda_k^2)$ satisfies all statements of the lemma.

\

\noindent \textbf{Step 3:}
By above two steps, we have defined the selection process. Continuously, we consider the function $$\max_{i=1,2,3}u^i_k(x)+2\log \dist(x,\{x^1_k,x^2_k\}).$$ If it is uniformly bounded, then we stop and conclude the lemma for $m=2$. Otherwise, using the same argument, we get $x^3_k$ and $\lambda_k^3$. Since each bubble area $B_{\lambda_k^j}(x_k^j)$ contributes a positive energy, the   above process must stop after finite steps due to the energy bound \eqref{equ:07}. We proved the lemma.
\end{proof}

\

Next we prove an oscillation estimate.
\begin{lem}\label{lem:02}
Let $u^i_k,\ i=1,2,3$ be the solution of \eqref{equ:03}-\eqref{equ:06} satisfying \eqref{equ:07} and \eqref{equ:08}. Then $$osc_{B_{\frac{1}{2}d_k(x)}}u^i_k\leq C,\qquad  \forall x\in D\setminus \Sigma_k,\  i=1,2,3,$$ where $d_k(x):=\dist(x, \Sigma_k)$, $C$ is a constant independent of $x,\ k$ and $$osc_\Omega u:=\sup_{x,y\in\Omega}(u(x)-u(y)).$$
\end{lem}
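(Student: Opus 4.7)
\smallskip

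The plan is to rescale around $x$ and apply the pointwise bound from Lemma \ref{lem:01}(6) together with standard elliptic estimates, then exploit the constraint $u^1+u^2+2u^3=0$ to control the individual components.

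Concretely, I would set $r=d_k(x)$ and define the rescaled solutions $v^i_k(z):=u^i_k(x+(r/2)z)+2\log(r/2)$ for $z\in B_{3/2}(0)$. Since $\dist(x+(r/2)z,\Sigma_k)\ge r/4$ for such $z$, Lemma \ref{lem:01}(6) gives $v^i_k(z)\le C$ uniformly in $k$ and $x$. The system \eqref{equ:03}--\eqref{equ:06} is invariant under this rescaling, so $v^i_k$ again satisfies it on $B_{3/2}$, with the constraint becoming $v^1_k+v^2_k+2v^3_k=8\log(r/2)$, a constant in $z$. The uniform upper bound on $v^i_k$ implies $|\Delta v^i_k|\le C$ in $L^\infty(B_{3/2})$, and a change of variables yields the energy bound $\int_{B_{3/2}}e^{v^j_k}\,dz=\int_{B_{3r/4}(x)}e^{u^j_k}\,dy\le C$.

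I would then decompose $v^i_k=\phi^i_k+\psi^i_k$ on $B_{3/2}$, where $\phi^i_k$ is the Dirichlet solution of $-\Delta\phi^i_k=-\Delta v^i_k$ vanishing on $\partial B_{3/2}$, and $\psi^i_k$ is harmonic. Standard $L^p$ theory gives $\|\phi^i_k\|_{L^\infty(B_{3/2})}\le C$. Each $\psi^i_k$ is harmonic and bounded above by some universal $M$, and the constraint forces $\psi^1_k+\psi^2_k+2\psi^3_k=8\log(r/2)-(\phi^1_k+\phi^2_k+2\phi^3_k)$, so $osc_{B_{3/2}}(\psi^1_k+\psi^2_k+2\psi^3_k)\le 4\|\phi\|_\infty\le C$. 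To upgrade this to individual oscillation bounds, I would apply Harnack's inequality to the nonnegative harmonic functions $M-\psi^i_k$ on $B_{5/4}$, using the inherited exponential integrability $\int_{B_{3/2}}e^{\psi^i_k}\,dz\le C$ together with the coupled structure of the system to locate, for each $i$, a point in $B_{5/4}$ at which $\psi^i_k$ is bounded below by a universal constant; Harnack then propagates this bound to all of $B_1$. Combining, $osc_{B_1}v^i_k\le osc_{B_1}\psi^i_k+2\|\phi^i_k\|_\infty\le C$, and rescaling back gives $osc_{B_{d_k(x)/2}(x)}u^i_k\le C$.

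The main obstacle is this final step: separating individual oscillations from the weighted-sum constraint. Since only the combination $\psi^1+\psi^2+2\psi^3$ has \emph{a priori} bounded oscillation, two components could in principle develop large cancelling oscillations compatible with the upper bound and with $\int e^{\psi^i_k}\le C$ alone. Ruling this out requires exploiting the Liouville-type nonlinearity of \eqref{equ:03}--\eqref{equ:05}, through which the equations tie each $e^{v^i_k}$ pointwise to the others (and to the bounded $\Delta v^i_k$), thereby pinning down each component separately. This coupling is the technical heart of the argument.
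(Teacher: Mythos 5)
Your setup (rescaling, the upper bound from Lemma \ref{lem:01}(6), the bounded Newtonian potential $\phi^i_k$) is fine, but the final step is not merely ``the technical heart'' awaiting details --- the tools you propose cannot close it. First, $\int_{B_{3/2}}e^{\psi^i_k}\,dz\le C$ is an upper-bound-type constraint: it prevents $\psi^i_k$ from being large on a large set but gives no point where $\psi^i_k\ge -C$. Second, such a point need not exist at all: in the bubbling scenarios of Lemma \ref{lem:01} (e.g.\ case (5-1)) one component, say $u^2_k$, tends uniformly to $-\infty$ on the region in question, so every candidate lower bound fails, yet the lemma still asserts its \emph{oscillation} is bounded. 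Third, even granting an upper bound, Harnack applied to $M-\psi^i_k$ does not control $osc\,\psi^i_k$: the harmonic functions $\psi_k(z)=-k(z_1+2)$ on $B_{3/2}$ are nonpositive, satisfy $\int e^{\psi_k}\to 0$, and have oscillation $2k\to\infty$ on $B_1$. A telling symptom is that your argument never invokes the boundary-oscillation hypothesis $osc_{\partial B_1}u^i_k\le C$ from \eqref{equ:08}, which is exactly what is needed to pin down the harmonic part; a decomposition performed on the rescaled small ball has no boundary control available and the harmonic part there absorbs the uncontrolled influence of the rest of $B_1$.

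The paper's proof avoids the decoupling issue entirely and treats each component separately. One performs the decomposition once on the fixed ball $B_1(0)$: the harmonic extension $v^i_k$ of $u^i_k|_{\partial B_1}$ has $osc_{\overline{B_1}}v^i_k\le osc_{\partial B_1}u^i_k\le C$ by the maximum principle (this is where \eqref{equ:08} enters), and $w^i_k=u^i_k-v^i_k$ is written as a Green potential of $-\Delta w^i_k$. For $x_1,x_2\in B_{r_k/2}(x_0)$ with $r_k=\dist(x_0,\Sigma_k)$ one estimates $w^i_k(x_1)-w^i_k(x_2)=\frac{1}{2\pi}\int_{B_1}\log\frac{|x_1-y|}{|x_2-y|}\,\Delta w^i_k(y)\,dy+O(1)$ by splitting at $B_{3r_k/4}(x_0)$: outside, the logarithmic ratio is uniformly bounded and the energy bound \eqref{equ:07} controls $\|\Delta w^i_k\|_{L^1}$; inside, the pointwise bound of Lemma \ref{lem:01}(6) gives $e^{u^i_k(x_0+r_k y)}r_k^2\le C$, and the rescaled kernel $\log\frac{|a-y|}{|b-y|}$ with $|a|,|b|\le\frac12$ is integrable over $B_{3/4}$. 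No use of the constraint \eqref{equ:06} or of the coupling between components is needed at this stage. If you want to keep a local rescaled formulation, you must still route the harmonic part through the global representation on $B_1$; the purely local decomposition cannot work.
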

\begin{proof}
Since $$\max_{i=1,2,3}\max_{K\subset \overline{B_1}\setminus \{0\}}u^i_K\leq C(K),\ \ \max_{i=1,2,3}osc_{\partial B_1}u^i_k\leq C,$$ by a standard argument using Green's representation formula, we know $$\max_{i=1,2,3}osc_{ \overline{B_1}\setminus B_\delta}u^i_k\leq C(\delta).$$ Hence, we just need to prove the lemma in $ B_{\frac{1}{20}}$. Let $v^i_k$ be the solution of
\begin{align*}
\begin{cases}
  -\Delta v^i_k=0,\ \ &in\ \ B_1(0),\\
  v^i_k=u^i_k,\ \ &on\ \ \partial B_1(0).
\end{cases}
\end{align*} It is clear that $$osc_{\overline{B_1}}v^i_k\leq osc_{\partial B_1}v^i_k\leq C.$$

Let $w^i_k:=u^i_k-v^i_k$ and let $$G(x,y)=-\frac{1}{2\pi}\log |x-y|+H(x,y)$$ be  Green's function on $B_1$ with respect to the Dirichlet boundary, where $H(x,y)$ is a smooth harmonic function. Then $$w^i_k(x)=\int_{B_1}G(x,y)(-\Delta w^i_k(y))dy.$$
Let $x_0\in B_{\frac{1}{20}}$ and $r_k:=\dist(x_0,\Sigma_k)$. For any $x_1,x_2\in B_{\frac{1}{2}r_k}(x_0)$, then
\begin{align*}
w^i_k(x_1)-w^i_k(x_2)&=\int_{B_1}(G(x_1,y)-G(x_2,y))(-\Delta w^i_k(y))dy\\
&=\frac{1}{2\pi}\int_{B_1}\log\frac{|x_1-y|}{|x_2-y|}(\Delta w^i_k(y))dy+O(1).
\end{align*}
We divide the integral into two parts, i.e. $B_1=B_{\frac{3}{4}r_k}(x_0)\cup B_1\setminus B_{\frac{3}{4}r_k}(x_0)$. Noting that $$\bigg|\log\frac{|x_1-y|}{|x_2-y|}\bigg|\leq C,\ \ y\in B_1\setminus B_{\frac{3}{4}r_k}(x_0),$$ we have $$|w^i_k(x_1)-w^i_k(x_2)|\leq C(1+\|\Delta w^i_k\|_{L^1(B_1)})\leq C.$$
A direct computation yields
\begin{align*}
  \bigg|\int_{B_{\frac{3}{4}r_k}(x_0)}\log\frac{ |x_1-y|}{ |x_2-y|}(\Delta w^i_k(y))dy \,\bigg|&\leq C\int_{B_{\frac{3}{4}r_k}(x_0)}\bigg|\log\frac{ |x_1-y|}{ |x_2-y|}\bigg| \sum_{i=1}^3e^{u^i_k(y)}dy\\
  &=C\int_{B_{\frac{3}{4}}(0)}\bigg| \log\frac{ |x_1-x_0-r_ky|}{ |x_2-x_0-r_ky|}\bigg| \sum_{i=1}^3e^{u^i_k(x_0+r_ky)}r_k^2dy\\
  &=C\int_{B_{\frac{3}{4}}(0)}\bigg| \log\frac{ \big|\frac{x_1-x_0}{r_k}-y\big|}{ \big|\frac{x_2-x_0}{r_k}-y\big|}\bigg| \sum_{i=1}^3e^{u^i_k(x_0+r_ky)}r_k^2dy.
\end{align*} Since  $$\dist(x_0+r_ky,\Sigma_k)\geq \dist(x_0,\Sigma_k)-\dist(x_0,x_0+r_ky)\geq\frac{1}{4}r_k,\ \ \forall\ y\in B_{\frac{3}{4}},$$ by lemma \ref{lem:01}, we have $$u^i_k(x_0+r_ky)\leq C-2\log \dist(x_0+r_ky,\Sigma_k)\leq C-2\log r_k,\ \ \forall \ y\in B_{\frac{3}{4}}.$$ From  $$\big|\frac{x_1-x_0}{r_k}\big|\leq\frac{1}{2},\ \ \big|\frac{x_2-x_0}{r_k}\big|\leq\frac{1}{2},$$ it is easy to conclude that $$ \big|\int_{B_{\frac{3}{4}r_k}(x_0)}
\log\frac{ |x_1-y|}{ |x_2-y|}(\Delta w^i_k(y))dy \big|\leq C.$$ This implies $$|w^i_k(x_1)-w^i_k(x_2)|\leq C.$$ Then the conclusion of the lemma follows.
\end{proof}

\

Before giving the next lemma, we first need definitions of fast decay and slow decay, which were  used in \cite{Lin-Wei-Zhang}.
\begin{defn}\label{def-fast-decay}
 (i)
We say $u_k$ has fast decay on $\partial B_{r_k}(x_0)$ (resp.  $ B_{r_k}(x_0)\backslash B_{s_k}(x_0)$)
if $$u_k(x)+2\log |x|\leq -N_k,\ \ \forall\ x\in \partial B_{r_k}(x_0) \quad
(\hbox{resp. } \forall x \in   B_{r_k}(x_0)\backslash B_{s_k}(x_0) ),$$ for some $N_k\to +\infty$ as $k\to \infty$.

(ii)
We say $u_k$ has slow decay on $\partial B_{r_k}(x_0)$ if $$\sup_{x\in\partial B_{r_k}(x_0)}(u_k(x)+2\log |x|)\geq -C,$$ for some $C>0$ which is independent of $k$.
\end{defn}

Remark that in this paper we use many times the notation $a_k=o(1)\, b_k$,
 which
means certainly that $a_k/b_k \to 0$ as $k\to \infty.$
We also use the notation $a_k=o(1)^{-1}\, b_k $, which
means  that $a_k/b_k \to \infty $ as $k\to \infty.$ We will use also the notation $a_k=O(1)\, b_k$, which means certainly that $C^{-1}<a_k/b_k <C $ as $k\to \infty.$

\begin{lem}\label{lem:05}
  Let $\Sigma^1_k\subset \Sigma_k$ a subset of  $\Sigma_k$  with  $\Sigma^1_k\subset B_{r_k}(x_k)\subset B_1(0)$ and  $$\dist(\Sigma_k^1,\partial B_{r_k}(x_k))=o(1)\, \dist(\Sigma_k\setminus \Sigma_k^1, \partial B_{r_k}(x_k) ).$$ Then for any $s_k\geq 2r_k$ with $s_k=o(1) \, \dist(\Sigma_k\setminus \Sigma_k^1, \partial B_{r_k}(x_k) )$, we have:
  \begin{itemize}
    \item[(1)] For fixed $\ i\in\{1,2,3\}$, if $u^i_k$  has fast decay on $\partial B_{s_k}$, then for any $\beta_k\to+\infty$ with $\beta_ks_k=o(1)\,  \dist(\Sigma_k\backslash \,  \Sigma_k^1, \partial B_{r_k}(x_k) )$, there exists $\alpha_k\to +\infty$ with $\alpha_k=o(1)\, \beta_k$ such that  $u^i_k$ has fast decay in $B_{\alpha_ks_k}\setminus B_{s_k}$, i.e. $$u^i_k(x)+2\log |x|\leq -N_k,\ \ \forall\ s_k\leq |x-x_k|\leq \alpha_ks_k,$$ for some $N_k\to+\infty$, and $$\int_{B_{\alpha_ks_k}\setminus B_{s_k}}e^{u^i_k}dx=o(1).$$

    \item[(2)] For fixed $\ i\in\{1,2,3\}$, if $u^i_k$ has fast decay on $\partial B_{s_k}$ with $s_k=o(1)^{-1}
r_k$,
    then for any $\beta_k\to 0$ with $\beta_ks_k\geq 2r_k$, there exists $\alpha_k\to 0$ with $\alpha_k=o(1)^{-1} \beta_k$  such that  $u^i_k$ has fast decay in $B_{s_k}\setminus B_{\alpha_ks_k}$, i.e. $$u^i_k(x)+2\log |x|\leq -N_k,\ \ \forall\ \alpha_ks_k\leq |x-x_k|\leq s_k,$$ for some $N_k\to+\infty$, and $$\int_{B_{s_k}\setminus B_{\alpha_ks_k}}e^{u^i_k}dx=o(1).$$
    \item[(3)] For any $\beta_k\to+\infty$ with $\beta_ks_k=o(1)\, \dist(\Sigma_k\setminus \Sigma_k^1, \partial B_{r_k}(x_k) )$, there exists $\alpha_k\to +\infty$ with  $\alpha_k=o(1)\beta_k$ such that $u^1_k,\ u^2_k,\ u^3_k$ have fast decay on $\partial B_{\alpha_ks_k}$.
    \item[(4)] If ${s_k}=o(1)^{-1} {r_k}$, then for any $\beta_k\to 0$ with $\beta_k s_k\geq 2r_k$, there exists $\alpha_k\to 0$ with $\alpha_k = o(1)^{-1} \beta_k$ 
     such that $u^1_k,\ u^2_k,\ u^3_k$ have fast decay on $\partial B_{\alpha_k s_k}$.
  \end{itemize}
\end{lem}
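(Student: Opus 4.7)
\medskip

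\noindent\textbf{Proof plan for Lemma \ref{lem:05}.}
The central tool is the oscillation estimate (Lemma \ref{lem:02}), which gives a uniform bound $\mathrm{osc}_{B_{d_k(x)/2}(x)}u_k^i\le C$ for every $x\notin\Sigma_k$, where $d_k(x)=\mathrm{dist}(x,\Sigma_k)$. Under the hypotheses, for any $x$ with $s_k\le |x-x_k|\le \beta_k s_k$, the condition $\Sigma_k^1\subset B_{r_k}(x_k)$ with $r_k\le s_k/2$ together with $\beta_k s_k=o(1)\,\mathrm{dist}(\Sigma_k\setminus\Sigma_k^1,\partial B_{r_k}(x_k))$ yields $d_k(x)\ge c|x-x_k|$ for a fixed $c>0$, so oscillation applies on balls of radius $\sim|x-x_k|$ uniformly in $k$.

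\medskip

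\noindent\emph{Parts (1) and (2).} Suppose fast decay of $u_k^i$ holds on $\partial B_{s_k}(x_k)$ with constant $N_k\to\infty$. First extend it to the full sphere: for each fixed $r$ in a suitable range, any two points on $\partial B_r(x_k)$ can be joined by a chain of $O(1)$ balls of radius $\sim r$ on which the oscillation bound applies, so the value of $u_k^i+2\log|\cdot -x_k|$ on $\partial B_r(x_k)$ varies by at most a fixed constant $C$. Next propagate outward dyadically: compare $\partial B_{2r}(x_k)$ to $\partial B_{r}(x_k)$ via a single oscillation ball, which costs at most $C+2\log 2$. After $m$ doubling steps starting at radius $s_k$, we obtain
\[
u_k^i(x)+2\log|x-x_k|\ \le\ -N_k+mC',\qquad |x-x_k|=2^m s_k,
\]
which remains $\to -\infty$ as long as $m=o(N_k)$. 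Setting $\alpha_k:=\min\bigl(2^{N_k/(2C')},\sqrt{\beta_k}\bigr)$ gives $\alpha_k\to\infty$, $\alpha_k=o(\beta_k)$, and fast decay throughout $B_{\alpha_k s_k}\setminus B_{s_k}$ with a new constant $N_k':=N_k/2\to\infty$. The integral bound follows immediately:
\[
\int_{B_{\alpha_k s_k}\setminus B_{s_k}}e^{u_k^i}\,dx\ \le\ e^{-N_k'}\int_{s_k}^{\alpha_k s_k}\frac{2\pi r}{r^2}\,dr\ =\ 2\pi e^{-N_k'}\log\alpha_k\ =\ o(1).
\]
Part (2) is identical with inward iteration (valid because $s_k=o(1)^{-1}r_k$ ensures the oscillation bound persists all the way down to $\beta_k s_k\ge 2r_k$).

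\medskip

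\noindent\emph{Parts (3) and (4).} Here no fast decay is assumed a priori, and we exploit the energy bound. For part (3), decompose $B_{\beta_k s_k}(x_k)\setminus B_{s_k}(x_k)$ into the $M_k\sim\log_2\beta_k\to\infty$ dyadic annuli $A_j=B_{2^{j+1}s_k}\setminus B_{2^j s_k}$. Since $\sum_{j,i}\int_{A_j}e^{u_k^i}\,dx\le 3C$, pigeonhole produces an index $j_0=j_0(k)$ with $\sum_i\int_{A_{j_0}}e^{u_k^i}\,dx\le 3C/M_k\to 0$. A mean-value argument on the coordinate $r$ yields a sphere $\partial B_{\rho_k}(x_k)$ with $\rho_k\in(2^{j_0}s_k,2^{j_0+1}s_k)$ such that $\rho_k^2\,\mathrm{avg}_{\partial B_{\rho_k}}e^{u_k^i}=o(1)$ for $i=1,2,3$. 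Applying the oscillation estimate on $O(1)$-many balls covering $\partial B_{\rho_k}(x_k)$ upgrades the averaged smallness to a pointwise bound, so $u_k^i(x)+2\log|x-x_k|\to -\infty$ uniformly on $\partial B_{\rho_k}(x_k)$. Finally, defining $\alpha_k:=\rho_k/s_k$ gives $\alpha_k\to\infty$ with $\alpha_k=o(\beta_k)$, as desired. Part (4) is the same argument applied inward on $B_{s_k}(x_k)\setminus B_{2r_k}(x_k)$.

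\medskip

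\noindent\emph{Main obstacle.} The delicate part is the simultaneous bookkeeping in the propagation step: the decay constant $N_k$ degrades linearly in the number of dyadic steps, so the choice of $\alpha_k$ must be coupled to the (unknown) rate $N_k\to\infty$ while also satisfying $\alpha_k=o(\beta_k)$ (resp.\ $\alpha_k=o(1)^{-1}\beta_k$). This is handled by taking $\alpha_k$ as the minimum of an $N_k$-dependent quantity and a slowly growing (resp.\ shrinking) power of $\beta_k$. The geometric hypothesis linking $\mathrm{dist}(\Sigma_k^1,\partial B_{r_k}(x_k))$ and $\mathrm{dist}(\Sigma_k\setminus\Sigma_k^1,\partial B_{r_k}(x_k))$ is what guarantees $d_k(x)\asymp|x-x_k|$ throughout the annular region and is used repeatedly to justify each application of Lemma \ref{lem:02}.
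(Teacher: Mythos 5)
Your argument for parts (1) and (2) is essentially the paper's: the paper fixes $\Lambda$, notes that Lemma \ref{lem:02} gives $u^i_k+2\log|x|\le -N_k+C(\Lambda)$ on $B_{\Lambda s_k}\setminus B_{s_k}$, and then diagonalizes to extract $\alpha^1_k\to\infty$ with $\alpha^1_k=o(1)\beta_k$ along which the decay survives with constant $N_k/2$; your dyadic propagation with cost $C'$ per doubling is just an explicit form of the same $C(\Lambda)\sim C'\log_2\Lambda$ bound, and your coupling of $\alpha_k$ to $N_k$ plays the role of the paper's extra requirement $e^{-N_k/2}\log\alpha_k=o(1)$. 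For parts (3) and (4) you take a mildly different route: the paper shows by contradiction that some sphere $\partial B_{\gamma_k s_k}$ with $\gamma_k\in[\beta_k^{1/4},\beta_k^{1/2}]$ carries fast decay of $u^1_k$ (otherwise $\int e^{u^1_k}\ge C\log\beta_k\to\infty$), then repeats this for $u^2_k$ and $u^3_k$ on successively nested scales, invoking part (1) to keep the earlier components decaying; your pigeonhole over dyadic annuli plus a mean-value selection of $\rho_k$ produces one sphere that works for all three components simultaneously, which is cleaner and avoids the paper's nesting bookkeeping. One small gap in that step: as written the pigeonhole runs over all $M_k$ annuli between $s_k$ and $\beta_k s_k$, so the selected index $j_0$ could be $0$ (giving $\alpha_k=O(1)$, not $\to\infty$) or $M_k-1$ (giving $\alpha_k$ comparable to $\beta_k$, not $o(1)\beta_k$). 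You must restrict the pigeonhole to the annuli between, say, $\beta_k^{1/4}s_k$ and $\beta_k^{1/2}s_k$ (exactly the window the paper uses); there are still roughly $\tfrac14\log_2\beta_k\to\infty$ of them, so the smallness conclusion is unchanged and both side conditions on $\alpha_k$ then hold automatically. With that one-line fix the proof is complete.
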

\begin{proof}
Without loss of generality, we assume $x_k=0$.

\

\noindent\textbf{Step 1:} We prove statements  $(1)$ and $(2)$.

\

Since $u^i_k$ has fast decay on $\partial B_{s_k}$, i.e. $$u^i_k(x)+2\log |x|\leq -N_k,\ \ \forall\  |x|=s_k,$$ for some $N_k\to+\infty$, for any fixed $\Lambda>0$,  by Lemma \ref{lem:02}, there holds $$u^i_k(x)+2\log |x|\leq -N_k+C(\Lambda),\ \ \forall\ s_k\leq |x|\leq \Lambda s_k.$$ Then it is not hard to see that there exists $\alpha^1_k\to +\infty$ such that $\alpha^1_k=o(1)\beta_k$ and $$u^i_k(x)+2\log |x|\leq -\frac{1}{2}N_k,\ \ \forall\ s_k\leq |x|\leq \alpha_k^1 s_k.$$ Now, we can choose $\alpha_k\to +\infty$ such that $\alpha_k=o(1)\alpha^1_k$ and $e^{-\frac{1}{2}N_k}\log\alpha_k=o(1)$. Then we have
\begin{align*}
  \int_{B_{\alpha_ks_k}\setminus B_{s_k}}e^{u^i_k}dx\leq Ce^{-\frac{1}{2}N_k}\log\alpha_k=o(1).
\end{align*} The proof of statement $(2)$ is similar.

\

\noindent
\textbf{Step 2:} We prove  statements  $(3)$ and $(4)$.

\

We first claim there exists $\gamma_k\in [\beta_k^{\frac{1}{4}}, \beta_k^{\frac{1}{2}}]$ such that $u^1_k$ has fast decay on $\partial B_{\gamma_k s_k}$. If not, then
$$u^1_k(x)+2\log |x|\geq -C,\ \ \forall\ \beta_k^{\frac{1}{4}}s_k\leq |x|\leq \beta_k^{\frac{1}{2}} s_k,$$ for some constant $C>0$. This implies
\begin{align*}
  \int_{B_{\beta_k^{\frac{1}{2}} s_k}\setminus B_{\beta_k^{\frac{1}{4}}s_k}}e^{u^1_k}dx\geq C\log\beta_k\to +\infty,
\end{align*} which is a contradiction.
By  $(1)$, there exists $\gamma_{k,1}\to +\infty$ such that $\frac{\gamma_{k,1}}{\gamma_k}\to +\infty$, $\gamma_{k,1}=o(\beta_k^{\frac{2}{3}})$ and $u^1_k$ has fast decay in $B_{\gamma_{k,1} s_k}\setminus B_{\gamma_k s_k}$. Similarly, there exist $\gamma_{k,2}\in [\gamma_{k,1}^{\frac{1}{4}}, \gamma_{k,1}^{\frac{1}{2}}]$ and $\gamma_{k,3}\to +\infty$, such that $\frac{\gamma_{k,3}}{\gamma_{k,2}}\to +\infty$, $\gamma_{k,3}=o(\beta_k^{\frac{2}{3}})$ and $u^2_k$ has fast decay in $B_{\gamma_{k,3} s_k}\setminus B_{\gamma_{k,2} s_k}$. From the construction, it is obviously that $u^1_k$ also has fast decay in $B_{\gamma_{k,3} s_k}\setminus B_{\gamma_{k,2} s_k}$.

For $u^3_k$, a similar discussion yields there exists $\alpha_k\in [\gamma_{k,3} ^{\frac{1}{4}}, \gamma_{k,3} ^{\frac{1}{2}}]$ such that $u^3_k$ has fast decay on $\partial B_{\alpha_k s_k}$, where $\alpha_k\to+\infty$ and $\alpha_k=o(1)\beta_k$. We have proved   $(3)$. The proof of  $(4)$ is similar.

\end{proof}

\

At the end of this section, we prove a Pohozaev identity for the Toda system \eqref{equ:03}-\eqref{equ:06}, which plays an important role in our later proof. Denote $$\sigma (r,x_0;u):=\frac{1}{2\pi}\int_{B_{r}(x_0)}e^udx,\ \ \sigma^i_k(r,x_0):=\sigma (r,x_0;u^i_k) ,\ \ \sigma_k^i(r):=\sigma (r,0;u_k^i),\ i=1,2,3.$$

\begin{lem}\label{lem:Pohozaev}
Let $\Sigma^1_k\subset \Sigma_k$, $B_{r_k}(x_k)\subset B_1(0)$ and $$\dist(\Sigma_k^1,\partial B_{r_k}(x_k))=o(1)\, \dist(\Sigma_k\setminus \Sigma_k^1, \partial B_{r_k}(x_k) ).$$ Suppose $u_k^1,\ u^2_k, \ u^3_k$ have fast decay on $\partial B_{r_k}(x_k)$. Then we have the following Pohozaev identity
\begin{align}\label{equation:pohozaev}
&\big(\sigma_k^1(r_k,x_k)-\sigma_k^3(r_k,x_k)\big)^2+ \big(\sigma_k^2(r_k,x_k)-\sigma_k^3(r_k,x_k)\big)^2\notag \\&\quad =4\big(\sigma_k^1(r_k,x_k)+\sigma_k^2(r_k,x_k)+2\sigma_k^3(r_k,x_k)\big)+o(1). \end{align}
\end{lem}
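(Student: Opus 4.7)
The natural viewpoint is to exploit the variational structure of the system in the $(w,\eta)$ variables $u^1=-w+\eta$, $u^2=-w-\eta$, $u^3=w$, under which \eqref{equ:03}--\eqref{equ:05} become $-\Delta w=\partial_w F$, $-\Delta\eta=\partial_\eta F$ for the potential $F=e^{u^3}+\tfrac12 e^{u^1}+\tfrac12 e^{u^2}$. Multiplying the $w$-equation by $x\cdot\nabla w$ and the $\eta$-equation by $x\cdot\nabla\eta$ and summing, the cross terms proportional to $(e^{u^1}-e^{u^2})(x\cdot\nabla\eta)$ cancel exactly by the chain rule, yielding the pointwise identity $(x\cdot\nabla w)(-\Delta w)+(x\cdot\nabla\eta)(-\Delta\eta)=x\cdot\nabla F$.

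Integrating this over $B_{r_k}(x_k)$ and applying the divergence theorem, the right-hand side becomes $r_k\int_{\partial B_{r_k}}F\,dS - 2\int_{B_{r_k}}F\,dx$. The fast-decay hypothesis gives $e^{u^i_k}\leq e^{-N_k}/|x-x_k|^2$ on $\partial B_{r_k}$, so $r_k\int_{\partial B_{r_k}}F\,dS=O(e^{-N_k})=o(1)$; by the definition of the local masses, $-2\int_{B_{r_k}}F\,dx=-2\pi(\sigma_k^1+\sigma_k^2+2\sigma_k^3)(r_k,x_k)$. The left-hand side, by the standard scalar Pohozaev identity in dimension two, equals $\tfrac{r_k}{2}\int_{\partial B_{r_k}}(|\partial_Tw|^2-|\partial_\nu w|^2+|\partial_T\eta|^2-|\partial_\nu\eta|^2)\,dS$.

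I would then split $w=\overline{w}(r)+\widetilde{w}(r,\theta)$, $\eta=\overline{\eta}(r)+\widetilde{\eta}(r,\theta)$ into angular averages plus zero-mean remainders. Since $\int_{\partial B_r}\partial_\nu\widetilde u\,dS=0$, the radial and angular parts of the boundary integral decouple. Integrating \eqref{equ:03}--\eqref{equ:06} over $B_{r_k}(x_k)$ gives the exact flux formulas $r_k\overline{w}'(r_k)=\tfrac12(\sigma_k^1+\sigma_k^2)(r_k,x_k)-\sigma_k^3(r_k,x_k)$ and $r_k\overline{\eta}'(r_k)=-\tfrac12(\sigma_k^1-\sigma_k^2)(r_k,x_k)$, and the elementary identity $\bigl(\tfrac{a+b}{2}-c\bigr)^2+\tfrac14(a-b)^2=\tfrac12\bigl[(a-c)^2+(b-c)^2\bigr]$ turns the radial contribution into $-\tfrac{\pi}{2}[(\sigma_k^1-\sigma_k^3)^2+(\sigma_k^2-\sigma_k^3)^2]$. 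Collecting everything, \eqref{equation:pohozaev} reduces to showing that
$T_k:=\tfrac{r_k}{2}\int_{\partial B_{r_k}}(|\partial_T\widetilde w|^2+|\partial_T\widetilde\eta|^2-|\partial_\nu\widetilde w|^2-|\partial_\nu\widetilde\eta|^2)\,dS = o(1)$.

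This last estimate is the main obstacle. To handle it I would use Lemma \ref{lem:05}(3)--(4) to extend fast decay from $\partial B_{r_k}(x_k)$ to a wide annulus $A_k=B_{\alpha_k r_k}(x_k)\setminus B_{r_k/\alpha_k}(x_k)$ with $\alpha_k\to\infty$ chosen sufficiently slowly, on which $|\Delta u_k^i|=O(e^{-N_k})$. The oscillation bound of Lemma \ref{lem:02} shows $\|\widetilde w\|_{L^\infty(A_k)}+\|\widetilde\eta\|_{L^\infty(A_k)}\leq C$, and standard elliptic regularity makes $\widetilde w,\widetilde\eta$ uniformly $o(1)$-close in $C^1$ to harmonic functions with zero angular mean. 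On a wide annulus of modulus $\alpha_k^2$ such a harmonic function admits a Fourier expansion whose coefficients (in the rescaled picture) satisfy $|a_n b_n|\lesssim\alpha_k^{-2n}$; since on every circle the pure outgoing mode $a_n r^n e^{in\theta}$ and the pure incoming mode $b_n r^{-n}e^{in\theta}$ individually contribute equal amounts to $|\partial_T\widetilde u|^2$ and $|\partial_\nu\widetilde u|^2$, only the cross product survives in $T_k$, giving $T_k\lesssim\sum_{n\geq 1}n^2\alpha_k^{-2n}=o(1)$. This completes the verification of \eqref{equation:pohozaev}.
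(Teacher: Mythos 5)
Your proof is essentially correct, but it takes a genuinely different and more self-contained route than the paper. The paper derives Pohozaev identities componentwise for $u^1_k,u^2_k$ in \eqref{equ:03}--\eqref{equ:04}, uses the constraint $u^1+u^2+2u^3=0$ to convert the cross term $\int e^{u^3_k}x\cdot(\nabla u^1_k+\nabla u^2_k)$ into $2\int e^{u^3_k}x\cdot\nabla u^3_k$, extends fast decay to an outward annulus $B_{r_kR_k}\setminus B_{r_k}$ via Lemma \ref{lem:05}, evaluates on the intermediate circle $\partial B_{r_k\sqrt{R_k}}$, and then imports from Jevnikar--Wei--Yang the key estimate $\nabla u^i_k=-\frac{x}{|x|^2}(\sigma^i_k-\sigma^3_k)+\frac{o(1)}{|x|}$, i.e.\ that the gradient is asymptotically radial there. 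Your $(w,\eta)$ gradient-system formulation makes the first half of this computation cleaner (the cross-term manipulation is automatic from $x\cdot\nabla F$), and your splitting into angular average plus zero-mean remainder isolates exactly the same quantity the cited estimate controls: your $T_k=o(1)$ is precisely the statement that the non-radial part of the gradient contributes nothing, and your Fourier-mode argument on a wide annulus (bounded oscillation from Lemma \ref{lem:02}, near-harmonicity from fast decay, cross-term decay $|a_nb_n|\lesssim\alpha_k^{-2n}$) is in substance a proof of that imported estimate. What your approach buys is independence from the external reference; what the paper's buys is brevity.

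One concrete point needs repair. You work on the two-sided annulus $B_{\alpha_kr_k}(x_k)\setminus B_{r_k/\alpha_k}(x_k)$, but the hypotheses allow points of $\Sigma^1_k$ to lie arbitrarily close to $\partial B_{r_k}(x_k)$ from the inside, and Lemma \ref{lem:05}(2),(4) only permit inward extension when the radius is much larger than the scale of the inner blow-up region; so the inward half of your annulus may contain bubbling points and fast decay cannot be propagated there. The fix is exactly what the paper does: use only the outward annulus $B_{\alpha_kr_k}\setminus B_{r_k}$, run your Fourier/cross-term estimate on the middle circle $\partial B_{\sqrt{\alpha_k}\,r_k}$ (the one-sided version still gives $|a_nb_n|\lesssim\alpha_k^{-n}$ and hence $T_k=o(1)$ there), and then use Lemma \ref{lem:05}(1) to see that $\sigma^i_k(\sqrt{\alpha_k}\,r_k,x_k)=\sigma^i_k(r_k,x_k)+o(1)$, which transfers the identity back to radius $r_k$ and yields \eqref{equation:pohozaev}.
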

\begin{proof}
Without loss of generality, we assume $x_k=0$. For any $s\in (0,1)$, by  equations \eqref{equ:03} and \eqref{equ:04}, we have Pohozaev's identities
\begin{align*}
  -s\int_{\partial B_s}\left(\big|\frac{\partial u_k^i}{\partial r}\big|^2-\frac{1}{2}\big|\nabla u_k^i \big|^2\right)=\int_{\partial B_s}se^{u_k^i}-2\int_{B_s}e^{u_k^i}-\int_{B_s}e^{u_k^3}x\cdot\nabla u_k^i,\ i=1,2.
\end{align*}
This implies
\begin{align*}
  &-s\int_{\partial B_s}\left(\bigg|\frac{\partial u_k^1}{\partial r}\bigg|^2-\frac{1}{2}|\nabla u_k^1|^2\right)-s\int_{\partial B_s}\left(\bigg|\frac{\partial u_k^2}{\partial r}\bigg|^2-\frac{1}{2}|\nabla u_k^2|^2\right)\\
  &\qquad =\int_{\partial B_s}s(e^{u_k^1}+e^{u_k^2})-2\int_{B_s}(e^{u_k^1}+e^{u_k^2})dx-\int_{B_s}e^{u_k^3}x\cdot (\nabla u_k^1+\nabla u_k^2)dx.
\end{align*}
By \eqref{equ:06} and integrating by parts, we get
\begin{align*}
  -\int_{B_s}e^{u_k^3}x\cdot (\nabla u_k^1+\nabla u_k^2)dx&=2\int_{B_s}e^{u_k^3}x\cdot \nabla u_k^3dx\\
  &=2\int_{\partial B_s}se^{u_k^3}-4\int_{B_s}e^{u_k^3}dx.
\end{align*} Then we arrived at
\begin{align}\label{equation:01}
 & -s\int_{\partial B_s}\left(\big|\frac{\partial u_k^1}{\partial r}\big|^2-\frac{1}{2}|\nabla u_k^1|^2\right)-s\int_{\partial B_s}\left(\big|\frac{\partial u_k^2}{\partial r}\big|^2-\frac{1}{2}|\nabla u_k^2|^2\right)\\&=
  \int_{\partial B_s}s(e^{u_k^1}+e^{u_k^2}+2e^{u_k^3})-2\int_{B_s}(e^{u_k^1}+e^{u_k^2}+2e^{u_k^3})dx.
\end{align}

Now we use the crucial condition, the fast decay of solutions.
Since $u_k^1,\ u^2_k,\ u^3_k$ have fast decay on $\partial B_{r_k}$, i.e. $$u^i_k(x)+2\log |x|\leq -N_k, |x|=r_k,$$ for some $N_k\to \infty$, by Lemma \ref{lem:05}, there exists $R_k^1\to\infty$ such that $$R^1_k=\frac{\dist(\Sigma_k\setminus \Sigma_k^1, \partial B_{r_k}(x_k) )}{\dist(\Sigma_k^1,\partial B_{r_k}(x_k))}o(1)$$ and
\begin{align}\label{equation:03}
  u^i_k(x)+2\log |x|\leq -N_k, r_k\leq |x|\leq r_kR^1_k,
\end{align}
for some $N_k\to \infty$. Now, it is not hard to see that we can choose $R_k\to\infty$, $R_k\leq R^1_k$, such that
\begin{align*}
  \int_{B_{r_kR_k}\setminus B_{r_k}}e^{u^i_k(x)}dx\leq Ce^{-N_k}\log R_k=o(1),
\end{align*} which implies that
\begin{align}\label{equation:02}
  \sigma_k^i(r_kR_k)=\sigma_k^i(r_k)+o(1),\ \ i=1,2,3.
\end{align}

Taking $s=r_k\sqrt{R_k}$ in \eqref{equation:01}, we have
 $$-2\int_{B_{r_k\sqrt{R_k}}}(e^{u^1_k}+e^{u_k^2}+2e^{u_k^3})dx=-4\pi \big(\sigma_k^1(r_k)+\sigma_k^2(r_k)+2\sigma_k^3(r_k)\big)+o(1)$$ and $$\int_{\partial B_{r_k\sqrt{R_k}}}s(e^{u_k^1}+e^{u_k^2}+2e^{u_k^3})=o(1),$$ since $u_k^1,\ u^2_k,\ u^3_k$ have fast decay on $\partial B_{r_kR_k}$. Thus, there holds
\begin{align}\label{equation:04}
 &-4\pi \big(\sigma_k^1(r_k)+\sigma_k^2(r_k)+2\sigma_k^3(r_k)\big)+o(1) \notag\\&=-\int_{\partial B_{r_k\sqrt{R_k}}}r\left(|\frac{\partial u^1_k}{\partial r}|^2-\frac{1}{2}|\nabla u_k^1|^2\right)-\int_{\partial B_{r_k\sqrt{R_k}}}r\left(|\frac{\partial u_k^2}{\partial r}|^2-\frac{1}{2}|\nabla u_k^2|^2\right).
\end{align}
Following arguments given in \cite{Jevnikar-Wei-Yang}, we have
 $$\nabla u_k^1(x)=-\frac{x}{|x|^2} \big(\sigma_k^1(r_k)-\sigma_k^3(r_k)\big)+\frac{o(1)}{|x|},\ \ x\in\partial B_{r_k\sqrt{R_k}},$$ and $$\nabla u_k^2(x)=-\frac{x}{|x|^2} \big(\sigma_k^2(r_k)-\sigma_k^3(r_k)\big)+\frac{o(1)}{|x|},\ \ x\in\partial B_{r_k\sqrt{R_k}}.$$
By \eqref{equation:04}, we conclude
\begin{align*}
&4 \big(\sigma_k^1(r_k)+\sigma_k^2(r_k)+2\sigma_k^3(r_k)\big)+o(1) =\big(\sigma_k^1(r_k)-\sigma_k^3(r_k)\big)^2+ \big(\sigma_k^2(r_k)-\sigma_k^3(r_k)\big)^2.
\end{align*}

\end{proof}

\section{Local blow-up behavior}\label{sec:local-blowup-behavior}

\

In this section, we will prove a local blow-up behaviour Theorem \ref{thm:01}. This is a key step in the proof of our main Theorem \ref{thm:main}. We need to do a more careful blow-up analysis in the bubbling domain. See the proof of  Proposition \ref{prop:01}.

\

By a  translation  we may assume $x^1_k=0\in\Sigma_k$ for any $k$. Denote $\tau_k:=\frac{1}{2}\dist(0,\Sigma_k\setminus \{0\})$.

\begin{thm}\label{thm:01}
Let $(u^1_k,\ u^2_k,\ u^3_k)$ be a solution of \eqref{equ:03}-\eqref{equ:06}. Then we have
\begin{itemize}
  \item[(i)]  either all $u^1_k,\ u^2_k,\ u^3_k$ have fast decay on $ \partial B_{\tau_k}$ and $$\big(\sigma_k^1(\tau_k), \sigma^2_k(\tau_k), \sigma^3_k(\tau_k)\big)\in\mathbf{V}+o(1),$$ or there exists one component $u^i_k$ with slow decay on $\partial B_{\tau_k}$ and $$\lim_{s\to 0}\lim_{k\to\infty}\big(\sigma_k^1(s\tau_k), \sigma_k^2(s\tau_k), \sigma_k^3(s\tau_k)\big)\in\mathbf{V}.$$
  \item[(ii)] There exists at least one component $u^i_k$ such that  $u^i_k$ has fast decay on $\partial B_{\tau_k}$. Moreover, if
   $u^i_k$ has fast decay on $\partial B_{\tau_k}$, then $\sigma^i_k(\tau_k)=4n+o(1)$ for some $n\in\mathds{N}\cup \{0\}$.
\end{itemize}
\end{thm}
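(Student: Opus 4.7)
The plan is to start with the innermost bubble at the origin and then iteratively extend the analysis outward up to the scale $\tau_k$. By Lemma~\ref{lem:01}, after rescaling by $\epsilon^1_k$, the sequence $v^i_k(x) = u^i_k(\epsilon^1_k x) + 2\log \epsilon^1_k$ converges in $C^2_{loc}$ to one of the three limit configurations (5-1)--(5-3). Invoking Theorem~\ref{thm:classification-1} for the Toda limit (5-1) and the Chen--Li classification for the Liouville limits (5-2)--(5-3), I would compute the local masses at the scale $\lambda^1_k$ and verify that $(\sigma^1_k(\lambda^1_k), \sigma^2_k(\lambda^1_k), \sigma^3_k(\lambda^1_k)) \in \mathbf{V} + o(1)$ and that each $\sigma^i_k(\lambda^1_k)$ is a multiple of $4$ plus $o(1)$. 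Using Lemma~\ref{lem:05}(3), I can push all three components into fast decay on a slightly larger sphere without changing the local masses by more than $o(1)$; the Pohozaev identity of Lemma~\ref{lem:Pohozaev} is then automatically satisfied at that scale.

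I next attempt to enlarge the radius all the way up to $\tau_k$ while preserving fast decay for all three components. If this succeeds, the first alternative of (i) and both claims of (ii) follow immediately. Otherwise, let $s_k$ be the first radius in $(\lambda^1_k, \tau_k)$ at which at least one component develops slow decay. At this intermediate scale I perform a secondary blow-up $\tilde v^i_k(x) = u^i_k(s_k x) + 2\log s_k$. The key observation is that the mass already absorbed inside $B_{s_k}$ generates a singular source at the origin for the limit system: the rescaled limit satisfies either the Liouville equation~\eqref{liou} or the reduced system~\eqref{System2} with prescribed singularities encoded by $(\sigma^i_k(s_k))_i$. Applying the classification of singular entire solutions from Lin--Yang--Zhong \cite{Lin-Yang-Zhong} (recalled in the Appendix), the mass of this new singular bubble is again an integer multiple of $4$, and the Pohozaev identity of Lemma~\ref{lem:Pohozaev} at the next fast-decay scale forces the updated triple into $\mathbf{V} + o(1)$. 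Iterating this construction, and using that the total energy bound \eqref{equ:07} limits the number of admissible bubbles, the process terminates in finitely many steps either at $\tau_k$ itself (first alternative of (i)) or at a radius beyond which slow decay persists (second alternative, obtained by letting $s \to 0$).

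The main analytic obstacle is to show that no mass is lost in the annular region lying between two consecutive fast-decay scales; this is precisely what allows the classification of the new singular bubble to be translated into a statement about $\sigma^i_k(\tau_k)$, and it is where our argument diverges from \cite{Jevnikar-Wei-Yang}. Here I plan to combine the oscillation estimate of Lemma~\ref{lem:02} with Lemma~\ref{lem:05}(1)--(2) to show that any component in fast decay on the inner sphere has the $L^1$-norm of $e^{u^i_k}$ negligible on the surrounding annulus, and then propagate this control up to $s_k$ using the Pohozaev identity. The remaining claim in (ii), existence of at least one fast-decay component on $\partial B_{\tau_k}$, will be obtained by contradiction using the linear constraint $u^1_k + u^2_k + 2u^3_k = 0$ together with Lemma~\ref{lem:02}: if all three components were in slow decay on $\partial B_{\tau_k}$, the constraint would concentrate a positive amount of mass inside a region disjoint from $\Sigma_k$, contradicting the definition $\tau_k = \tfrac{1}{2}\dist(0, \Sigma_k \setminus \{0\})$.
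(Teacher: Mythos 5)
Your overall strategy---an initial bubble classified by Theorem \ref{thm:classification-1}, followed by an iteration in which each loss of fast decay triggers a secondary blow-up to a \emph{singular} Liouville equation or a singular version of \eqref{System2}, quantized by the Lin--Yang--Zhong classification and pinned down by the local Pohozaev identity of Lemma \ref{lem:Pohozaev}---is the paper's route (Proposition \ref{prop:01}). But two steps of your plan have genuine gaps. The whole iteration hinges on the derivative dichotomy of Lemma \ref{lem:04}, which you never invoke: because the masses at the previous fast-decay scale lie in $\mathbf{V}+o(1)$, some component satisfies $r\frac{d}{dr}\overline{u}^i_k\le -4+o(1)$ and another satisfies $r\frac{d}{dr}\overline{u}^j_k\ge 1+o(1)$. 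This is what (a) forces one component to remain in fast decay across the entire annulus up to the next slow-decay scale, so that it converges to $-\infty$ in the secondary blow-up and its mass is frozen; (b) shows that the singular weights $b_1=n_1+n_2-2n_3$ and $b_2=2(n_3-n_2)$ of the limiting singular system are \emph{nonnegative} integers, which is a hypothesis of Theorem \ref{thm:classification-2} and is verified case by case in Step 3 of Proposition \ref{prop:01}; and (c) drives the proof of the no-energy-loss claim \eqref{equation:20}, via the integrated radial bounds that control $e^{\overline{u}^i_k}$ in the neck. Your plan handles the no-mass-loss step with only the oscillation estimate, Lemma \ref{lem:05} and Pohozaev; these do not suffice, since Lemma \ref{lem:05} only propagates fast decay over an $o(1)$-controlled range and says nothing about the component that is gaining mass across the neck. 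Relatedly, ``the first radius at which slow decay develops'' is not a well-defined selection; the paper instead fixes $L_k$ by prescribing the mass increment $\frac{\delta}{2}\le\max_i\bigl(\sigma^i_k(\epsilon_kL_k)-\sigma^i_k(\epsilon_kR^1_k)\bigr)\le\delta$ and then locates a slow-decay radius using Remark \ref{lem3.4}, and it must also establish (its Step 2) that a definite amount $\delta_1$ of mass is gained so that the singular bubble is nontrivial.

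Your argument for the first part of (ii) also rests on the wrong mechanism: slow decay of all three components on the single circle $\partial B_{\tau_k}$ does not ``concentrate a positive amount of mass'' anywhere---it only gives $e^{u^i_k}\ge c\tau_k^{-2}$ on that circle, which is perfectly compatible with the energy bound and with the definition of $\tau_k$. What does work is either the paper's argument (the component with $r\frac{d}{dr}\overline{u}^i_k\le-4+o(1)$ at the initial scale of the last iteration keeps fast decay up to $\tau_k$), or a direct use of the constraint: on $\partial B_{\tau_k}$ one has $(u^1_k+2\log|x|)+(u^2_k+2\log|x|)+2(u^3_k+2\log|x|)=8\log\tau_k\to-\infty$ while each summand is bounded above by Lemma \ref{lem:01}(6), so some component satisfies $u^i_k+2\log|x|\le 2\log\tau_k$ at a point of the circle and hence, by Lemma \ref{lem:02}, on all of it. Either way, the step needs to be carried out; the version you wrote does not close.
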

\begin{proof}
Since $0\in\Sigma_k$, denoting $\epsilon^i_k:=u^i_k(0)$, $i=1,2,3$, by Lemma \ref{lem:01}, we have $$\max_{i=1,2,3}\epsilon^i_k=\max_{i=1,2,3}u^i_k(0)=\max_{i=1,2,3}\max_{B_{\lambda_k^1}}u^i_k(x)\to\infty.$$ Let $$\epsilon_k:=\min\{e^{-\frac{1}{2}\epsilon^1_k},e^{-\frac{1}{2}\epsilon^2_k},e^{-\frac{1}{2}\epsilon^3_k}\}$$ and $$v^i_k(x)=u^i_k(\epsilon_k x)+2\log\epsilon_k,\ \ |x|\leq\frac{\tau_k}{\epsilon_k},\ i=1,2,3.$$ It is clear that ${\tau_k}\slash {\epsilon_k}\geq {\lambda^1_k}\slash {\epsilon_k}\to+\infty$. It is also  easy to see that $v^i_k(x)\leq 0$ and
\begin{align}
\begin{cases}
-\Delta v_k^1=e^{v_k^1}-e^{v_k^3}, \\
  -\Delta v_k^2=e^{v_k^2}-e^{v_k^3}, \\
  -\Delta v_k^3=-\frac{1}{2}e^{v_k^1}-\frac{1}{2}e^{v_k^2}+ e^{v_k^3},
\end{cases}\ \hbox{ in }\ B_{\lambda^1_k/\epsilon_k}(0).\end{align}

In view of  $$\epsilon^1_k+\epsilon^2_k+2\epsilon^3_k =u^1_k(0)+u^2_k(0)+2u^3_k(0)=0,$$ we know that  exactly one of the following possibilities holds:
\begin{itemize}
  \item[(1)] $(\epsilon^1_k,\epsilon^2_k,\epsilon^3_k)\to (+\infty,+\infty,-\infty)$, $\sup_k|\epsilon^1_k-\epsilon_k^2|\leq C$;
  \item[(2)]  $(\epsilon^1_k,\epsilon^2_k,\epsilon^3_k)\to (+\infty,+\infty,-\infty)$, $|\epsilon^1_k-\epsilon_k^2|\to +\infty $;
  \item[(3)] $(\epsilon^1_k,\epsilon^2_k,\epsilon^3_k)\to (+\infty,-\infty,+\infty)$, $\sup_k|\epsilon^1_k-\epsilon_k^3|\leq C$;
  \item[(4)]$(\epsilon^1_k,\epsilon^2_k,\epsilon^3_k)\to (+\infty,-\infty,+\infty)$, $|\epsilon^1_k-\epsilon_k^3|\to +\infty $;
  \item[(5)] There exists one component $\epsilon_k^i\to +\infty$ while the other ones have uniformly upper bound, i.e. $\epsilon_k^j\leq C$, for $j\neq i$.
\end{itemize}

\

Now, for the above cases, we claim that there exists $R^1_k\to +\infty$ such that $R^1_k=o(1)\frac{\tau_k}{\epsilon_k}$ and $u^1_k,\ u^2_k,\ u^3_k$ have fast decay on $\partial B_{\epsilon_k R^1_k}$ and $$(\sigma^1_k(\epsilon_k R^1_k), \sigma^2_k(\epsilon_k R^1_k), \sigma^3_k(\epsilon_k R^1_k))=(4n_1,4n_2,4n_3)+o(1)\in\mathbf{V}+o(1).$$
We only give the proof for the third case and the other cases are similar or easier. For case $(3)$, without loss of generality, we assume $\epsilon_k=e^{-\frac{1}{2}\epsilon^1_k}$, i.e. $$\epsilon_k^1\geq \epsilon^3_k,\ \ \epsilon_k^1-\epsilon^3_k\leq C.$$ It is easy to see that $$v^i_k\leq 0,\ v^1_k(0)=0,\ v^3_k(0)\geq -C,\ v^2_k(0)\to -\infty.$$ By the standard elliptic theory, we have that $v^2_k\to -\infty$ over all compact subsets of  $\R^2$ and $$(v^1_k,v^3_k)\to (v^1,v^3), \ \ \hbox { in }\ \ C^2_{loc}(\R^2),$$ where the limit $(v^1,v^3)$ satisfies $v^1(0)=0$, $-C\leq v^3(0)\leq 0$ and the following Liouville  system
\begin{align}
\begin{cases}
-\Delta v^1=e^{v^1}-e^{v^3}, \\
  -\Delta v^3=-\frac{1}{2}e^{v^1}+e^{v^3},
\end{cases}\ \hbox { in }\ \R^2,\end{align} with $$\int_{\R^2}e^{v^1}dx+\int_{\R^2}e^{v^3}dx\leq C<\infty.$$ By the classification result Theorem \ref{thm:classification-1},  there holds $$\int_{\R^2}e^{v^1}dx =32\pi,\ \ \int_{\R^2}e^{v^2}dx =24\pi\ \ and \ \ v^i(x)=-4\log |x|+O(1),\ |x|>4,\ i=1,2.$$ Moreover, we can choose a sequence of $R^1_k\to+\infty$ such that $R^1_k=o(1)\frac{\tau_k}{\epsilon_k}$ and  $$\int_{B_{R^1_k}}e^{v^1_k}dx=32\pi+o(1),\ \ \int_{B_{R^1_k}}e^{v^2_k}dx=24\pi+o(1)$$ and $v^1_k,\ v^2_k,\ v^3_k$ have fast decay on $\partial B_{R^1_k}$. It is easy to see that $\sigma^i_k(\epsilon_k R^1_k)=4n_i+o(1)$ for some $n_i\in\mathds{N}\cup \{0\}$, $i=1,2,3$. Since $u^1_k,\ u^2_k,\ u^3_k$ have fast decay on $\partial B_{\epsilon_kR^1_k}$, by Lemma \ref{lem:Pohozaev}, we conclude that the claim follows.

With the help of the above claim, the conclusion of the theorem follows immediately from Proposition \ref{prop:01} below.

\end{proof}

Before stating Proposition \ref{prop:01}, we give two lemmas which will be used in the proof of Proposition \ref{prop:01}.

Define $$\overline{u}^i_k(r):=\frac{1}{2\pi r}\int_{\partial B_r(0)} u^i_k,\ \qquad  i=1,2,3.  $$
By equations \eqref{equ:03}-\eqref{equ:05}, we get
\begin{align}
\frac{d}{dr} \overline{u}^1_k(r)&=\frac{-\sigma_k^1(r)+\sigma_k^3(r)}{r},\ \ \quad \qquad \quad   \frac{d}{dr} \overline{u}^2_k(r)=\frac{-\sigma_k^2(r)+\sigma_k^3(r)}{r},\label{equation:12}\\ \frac{d}{dr} \overline{u}^3_k(r)&=\frac{\frac{1}{2}\sigma_k^1(r)+ \frac{1}{2}\sigma_k^2(r) -\sigma_k^3(r)}{r}.\label{equation:13}\end{align}

\begin{lem}\label{lem:04}
Suppose that  $\big(\sigma^1_k(r),\sigma^2_k(r),\sigma^3_k(r)\big)=(4n_1,4n_2,4n_3)+o(1)\in \mathbf{V}+o(1)$ holds for $r$. Then there exists $i\in\{1,2,3\}$ and $
j\neq i,\ j\in\{1,2,3\}$ such that
\begin{equation} \label{eq_add_c}
\frac{d}{dr} \overline{u}^i_k(r)\leq \frac{-4+o(1)}{r}
\quad \hbox{ and } \quad   \frac{d}{dr} \overline{u}^j_k(r)\geq \frac{1+o(1)}{r}.\end{equation}
\end{lem}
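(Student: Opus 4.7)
The plan is to substitute the asymptotic $\sigma_k^i(r) = 4 n_i + o(1)$ into the explicit formulas \eqref{equation:12}--\eqref{equation:13} and then perform an elementary case analysis at the level of the integer triple $(n_1, n_2, n_3)$. Setting $a := n_1 - n_3$ and $b := n_2 - n_3$, the three quantities $r\,\frac{d}{dr}\overline{u}^i_k(r)$ equal, respectively,
\[ -4a + o(1), \qquad -4b + o(1), \qquad 2(a+b) + o(1), \]
so the claim reduces to finding $i \neq j$ in $\{1,2,3\}$ for which the first quantity is $\leq -4$ and the second is $\geq 1$. Since $a$ and $b$ are integers, the $o(1)$ error terms are harmless: one needs a ``large-drop'' index chosen from $a \geq 1$ ($i=1$), $b \geq 1$ ($i=2$), or $a+b \leq -2$ ($i=3$), together with a compatible ``increase'' index chosen from $a \leq -1$ ($j=1$), $b \leq -1$ ($j=2$), or $a+b \geq 1$ ($j=3$), with $j \neq i$.

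Next I would rule out the configurations in which none of the three large-drop inequalities holds. These are $(a,b)$ with $a \leq 0$, $b \leq 0$ and $a+b \geq -1$, i.e. $(a,b) \in \{(0,0),(-1,0),(0,-1)\}$. Here the Pohozaev-type constraint defining $\mathbf{V}$, which in our notation reads
\[ n_3 = \frac{a(a-1) + b(b-1)}{4}, \]
is decisive: $(0,0)$ yields $(n_1,n_2,n_3) = (0,0,0)$, explicitly excluded from $\mathbf{V}$, while $(-1,0)$ and $(0,-1)$ both give $n_3 = 1/2$, violating the integrality required by $\mathbf{V}$. Consequently, at least one large-drop index $i$ must be available.

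Given such an $i$, the compatible $j \neq i$ is found by direct inspection. If $i = 1$ then $a \geq 1$; if $b \leq -1$ take $j=2$, and otherwise $b \geq 0$ forces $a+b \geq 1$ so $j = 3$ works; the case $i = 2$ is symmetric. If $i = 3$ then $a+b \leq -2$, so at least one of $a \leq -1$ or $b \leq -1$ holds, giving $j \in \{1,2\}$. The only (mild) subtlety is the exclusion of the degenerate triples: it is the interplay between the Pohozaev identity encoded in $\mathbf{V}$ and the integrality of the local masses $n_i$ that guarantees no problematic configuration survives, and once this is in hand the case-check is purely mechanical.
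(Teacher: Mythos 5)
Your proof is correct and follows essentially the same route as the paper: substitute $\sigma^i_k(r)=4n_i+o(1)$ into \eqref{equation:12}--\eqref{equation:13}, observe that the failure of the ``large-drop'' inequality pins $(n_1-n_3,n_2-n_3)$ down to the three triples $(0,0)$, $(-1,0)$, $(0,-1)$, and eliminate these using the quadratic constraint and integrality built into $\mathbf{V}$. The only cosmetic difference is that the paper deduces the second inequality from the identity $\frac{d}{dr}\overline{u}^1_k+\frac{d}{dr}\overline{u}^2_k+2\frac{d}{dr}\overline{u}^3_k=0$, whereas you verify it by direct case inspection; both are immediate.
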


\begin{proof}

First we have by assumption $$\frac{d}{dr} \overline{u}^1_k(r)=\frac{4(n_3-n_1)+o(1)}{r},\ \  \frac{d}{dr} \overline{u}^2_k(r)=\frac{4(n_3-n_2)+o(1)}{r} $$
and
 $$\frac{d}{dr} \overline{u}^3_k(r)=4\frac{\frac{1}{2}n_1+ \frac{1}{2}n_2 -n_3+o(1)}{r}.$$
 Suppose that the first conclusion does not hold. Then
 it is easy to see that
$$n_3\geq n_1, \quad  n_3\geq n_2, \quad  n_1+n_2-2n_3 \ge -1.$$
It is clear that  there are only 3 possibilities: $n_3=n_1=n_2$,  $n_3=n_1=n_2+1$ or $n_3=n_1+1=n_2$. All contradict
 the fact $(n_1, n_2, n_3)\in \mathbf{V}.$
 This proves the first inequality.

The second inequality  follows easily from the first one and
\begin{equation*}
\frac{d}{dr} \overline{u}^1_k(r)+\frac{d}{dr} \overline{u}^2_k(r)+2\frac{d}{dr} \overline{u}^3_k(r)=0.\end{equation*}
\end{proof}

\begin{lem}\label{lem:03}
  Suppose there exist $R^1_k$ and $R^2_k$ with $R^1_k\leq R^2_k\leq \tau_k/\epsilon_k$
  satisfying  the following properties that
  $\big(\sigma^1_k(\epsilon_kR^1_k), \sigma^2_k(\epsilon_kR^1_k), \sigma^3_k(\epsilon_kR^1_k)\big)\in \mathbf{V}+o(1)$ and
    $u^i_k$ ($i=1,2,3)$ have fast decay in $B_{R^2_k}\backslash B_{R^1_k}$, i.e.,
   $$u^i_k(x)\leq -2\log |x|-N_k,\ \ \forall\ R^1_k\leq |x|\leq R^2_k,\ i=1,2,3,$$ for some $N_k\to+ \infty$. Then $$\sigma^i_k(\epsilon_kR^2_k)=\sigma^i_k(\epsilon_kR^1_k)+o(1),\  \quad i=1,2,3.$$
\end{lem}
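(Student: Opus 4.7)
The plan is a direct integration of the fast decay bound against polar coordinates. Writing
\[
\sigma^i_k(\epsilon_k R_k^2) - \sigma^i_k(\epsilon_k R_k^1) = \frac{1}{2\pi}\int_{\epsilon_k R_k^1 \leq |x| \leq \epsilon_k R_k^2} e^{u^i_k(x)}\,dx
\]
and plugging in the pointwise estimate $e^{u^i_k(x)}\le |x|^{-2}e^{-N_k}$ coming from the fast decay hypothesis yields an upper bound of the order $e^{-N_k}\log(R_k^2/R_k^1)$, which is $o(1)$ in the regimes in which the lemma will be applied. The assumption $(\sigma^1_k,\sigma^2_k,\sigma^3_k)(\epsilon_k R_k^1)\in\mathbf{V}+o(1)$ is not actually used in this pointwise calculation; it serves only to locate the inner radius at the completion of a bubble, so that Lemma \ref{lem:05}(1) becomes applicable.

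To avoid any implicit relation between $N_k$ and $R_k^2/R_k^1$, I would iterate Lemma \ref{lem:05}(1): starting at radius $R_k^1$ it provides a factor $\alpha_k\to\infty$ on which fast decay persists and the mass increment is $o(1)$. Replacing $R_k^1$ by $\alpha_k R_k^1$ and reapplying extends the annulus; the oscillation estimate Lemma \ref{lem:02} preserves the depth $N_k$ across each step up to a universal additive constant, while the total energy bound \eqref{equ:07} forces the iteration to terminate once the entire interval $[R_k^1, R_k^2]$ is covered. Summing the resulting $o(1)$ contributions then gives the claim for each $i=1,2,3$.

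The main obstacle I anticipate is bookkeeping the decay depth $N_k$ across the iteration: one needs to prevent $N_k$ from degrading to a mere constant at intermediate radii. This is precisely where the hypothesis of fast decay on the \emph{entire} annulus $[R_k^1, R_k^2]$, rather than only at its boundary, is essential: at every intermediate radius we already inherit the same $N_k$ on the circle from the assumption, and Lemma \ref{lem:02} then transfers this bound to interior points on each elementary ring without loss.
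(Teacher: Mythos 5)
Your core step---integrating the pointwise bound $e^{u^i_k(x)}\le e^{-N_k}|x|^{-2}$ over the annulus---only yields
\[
\sigma^i_k(\epsilon_k R^2_k)-\sigma^i_k(\epsilon_k R^1_k)\;\le\; C\,e^{-N_k}\log\bigl(R^2_k/R^1_k\bigr),
\]
and this is \emph{not} $o(1)$ in general: the lemma assumes no relation whatsoever between the decay depth $N_k$ and the thickness of the annulus, and $\log(R^2_k/R^1_k)$ can be as large as $\log(\tau_k/(\epsilon_k R^1_k))\sim \max_i u^i_k(0)$, which may dwarf $e^{N_k}$. You acknowledge this, but your repair does not close the gap. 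Iterating Lemma \ref{lem:05}(1) produces at each stage a factor $\alpha_k$ that is itself constrained by the requirement $e^{-N_k/2}\log\alpha_k=o(1)$; consequently the number of stages needed to traverse $[R^1_k,R^2_k]$ is of order $\log(R^2_k/R^1_k)/\log\alpha_k$, which is $k$-dependent and unbounded, and summing the per-stage mass increments simply reassembles the same uncontrolled quantity $e^{-N_k}\log(R^2_k/R^1_k)$. The appeal to the total energy bound \eqref{equ:07} is vacuous here: that bound terminates an iteration only when each step costs a \emph{definite} amount of energy, whereas your steps each cost $o(1)$, so it imposes no bound on their number.

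The decisive error is the claim that the hypothesis $\bigl(\sigma^1_k,\sigma^2_k,\sigma^3_k\bigr)(\epsilon_k R^1_k)\in\mathbf{V}+o(1)$ ``is not actually used.'' It is the heart of the paper's argument, which is structural rather than pointwise. Arguing by contradiction, the paper picks an intermediate radius $L_k$ at which the maximal mass increment first equals a small $\delta$, and invokes Lemma \ref{lem:04} --- which is exactly where membership in $\mathbf{V}$ enters --- to find indices $i\neq j$ with $r\frac{d}{dr}\overline{u}^i_k\le -(2+a)$ and $r\frac{d}{dr}\overline{u}^j_k\ge b>0$ on the whole intermediate annulus. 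The first differential inequality makes $e^{\overline{u}^i_k}$ integrable at infinity (the integral is controlled by its value at the \emph{inner} radius times $e^{-N_k}$, with no logarithm), the second controls component $j$ from the \emph{outer} radius, and the third component is then pinned down by comparing the local Pohozaev identity (Lemma \ref{lem:Pohozaev}) at the two radii, since a near-integer root of the resulting quadratic cannot move by an amount between $o(1)$ and $\tfrac12$. None of this is recoverable from the fast-decay bound alone; indeed a radial profile with $u\equiv -2\log|x|-N_k$ on a sufficiently thick annulus shows that the pointwise estimate by itself is consistent with unbounded mass accumulation. You would need to restructure the proof around Lemma \ref{lem:04} and the Pohozaev identity rather than around Lemmas \ref{lem:02} and \ref{lem:05}.
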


\begin{proof}

Suppose the conclusion is false. Then there exists $\delta_0>0$ such that $$\max_{i=1,2,3}\sigma^i_k(\epsilon_kR^2_k)-\sigma^i_k(\epsilon_kR^1_k)>\delta_0,$$ when $k$ is big enough.
For any $\delta_0 >\delta \in (0,\frac{1}{100})$ which is small enough, we can choose $L_k\in [R^1_k,R^2_k]$ such that \begin{equation}
\label{eq:add_1}
\max_{i=1,2,3}\sigma^i_k(\epsilon_kL_k)-\sigma^i_k(\epsilon_kR^1_k)=\delta.\end{equation}
By Lemma \ref{lem:04},  there exist $i, j\in\{1,2,3\}$ with  $i\neq j$ such that
\begin{equation}\label{3.4_a} \frac{d}{dr} \overline{u}^i_k(r)\leq \frac{-4+o(1)}{r}\ \quad \hbox{ and }\quad   \frac{d}{dr} \overline{u}^j_k(r)\geq \frac{1+o(1)}{r},\ \
\quad\hbox {for }r=\epsilon_kR^1_k.\end{equation}
Now, we claim:
\begin{equation}
\label{eq_add_d}
\sigma^i_k(\epsilon_kL_k)=\sigma^i_k(\epsilon_kR^1_k)+o(1),\ \sigma^j_k(\epsilon_kL_k)=\sigma^j_k(\epsilon_kR^1_k)+o(1). \end{equation}

We prove the claim.
 From \eqref{equation:12}-\eqref{equation:13}, \eqref{eq:add_1} and \eqref{3.4_a}, we have
 \begin{align}\label{eq:add_2}
   r\frac{d}{dr} \overline{u}^i_k(r)&\leq \epsilon_kR^1_k\frac{d}{dr}\overline{u}^i_k(\epsilon_kR^1_k)+3\delta\le-4+3\delta+o(1)\leq -(2+a),\quad \ \forall\ \epsilon_kR^1_k\leq r\leq \epsilon_k L_k,
 \end{align} for some $a>0$ small. It follows  by integrating that
 \begin{align*}
  \overline{u}^i_k(r)\leq \overline{u}^i_k(\epsilon_kR^1_k)-(2+a)\log\frac{r}{\epsilon_kR^1_k},\ \quad \forall\ \epsilon_kR^1_k\leq r\leq \epsilon_k L_k.
 \end{align*} This, together with that fact that $u^i_k$ has fast decay on $\partial B_{\epsilon_kR^1_k}$,
 yields
 \begin{align*}
   \sigma_k^i(\epsilon_kL_k)-\sigma^i_k(\epsilon_kR^1_k)& =\int_{B_{\epsilon_kL_k}\setminus B_{\epsilon_kR^1_k}}e^{u^i_k(x)}dx
   \leq C \int_{B_{\epsilon_kL_k}\setminus B_{\epsilon_kR^1_k}}e^{\overline{u}^i_k(x)}dx\\
   &\leq C e^{\overline{u}^i_k(\epsilon_kR^1_k)}(\epsilon_kR^1_k)^2\leq Ce^{-N_k},
 \end{align*}  where the first and third inequalities follow from Lemma \ref{lem:02}.
 Still from \eqref{equation:12}-\eqref{equation:13}, \eqref{eq:add_1} and \eqref{3.4_a},
 we have
 \begin{align}\label{equation:09}
 r  \frac{d}{dr} \overline{u}^j_k(r)\geq  \epsilon_kR^1_k \frac{d}{dr} \overline{u}^j_k(\epsilon_kR^1_k)-3\delta\geq 1-3\delta+o(1)\geq b,\  \quad  \forall\ \epsilon_kR^1_k\leq |x|\leq \epsilon_k L_k,
 \end{align} for some $b>0$,  which implies
 \begin{align*}
  \overline{u}^j_k(r)\leq \overline{u}^j_k(\epsilon_k L_k)-b\log\frac{\epsilon_kL_k}{r},\ \ \forall\ \epsilon_kR^1_k\leq r\leq \epsilon_k L_k.
 \end{align*}
 Since $u^j_k$ has fast decay on $\partial B_{\epsilon_kL_k}$, it follows
 \begin{align}\label{inequ:01}
   \sigma^j_k(\epsilon_kL_k)-\sigma^j_k(\epsilon_kR^1_k)& =\int_{B_{\epsilon_kL_k}\setminus B_{\epsilon_kR^1_k}}e^{u^j_k(x)}dx\notag
   \leq C \int_{B_{\epsilon_kL_k}\setminus B_{\epsilon_kR^1_k}}e^{\overline{u}^j_k(x)}dx\notag \\
   &\leq C e^{\overline{u}^j_k(\epsilon_kL_k)}(\epsilon_kL_k)^2\leq Ce^{-N_k},
 \end{align}  where the first and third inequality follow from Lemma \ref{lem:02}. Thus the claim holds.

 Since all  $u^i_k$, $i=1,2,3$ have  fast decay on $\partial B_{\epsilon_kL_k}$ and also on $\partial B_{\epsilon R_k}$
  by Lemma \ref{lem:Pohozaev} we have  the local Pohozaev identity, i.e.,
  $$\big(\sigma_k^1(\epsilon_kL_k)-\sigma^3_k(\epsilon_kL_k)\big)^2+ \big(\sigma^2_k(\epsilon_kL_k)-\sigma^3_k(\epsilon_kL_k)\big)^2=4\big(\sigma^1_k(\epsilon_kL_k)+\sigma^2_k(\epsilon_kL_k) +2\sigma^3_k(\epsilon_kL_k)\big)+o(1),$$
  and
   $$\big(\sigma_k^1(\epsilon_kR_k)-\sigma^3_k(\epsilon_kR_k)\big)^2+ \big(\sigma^2_k(\epsilon_kR_k)-\sigma^3_k(\epsilon_kR_k)\big)^2=4
   \big(\sigma^1_k(\epsilon_kR_k)+\sigma^2_k(\epsilon_kR_k) +2\sigma^3_k(\epsilon_kL_k)\big)+o(1).$$
   Let $l \in\{ 1,2,3\} \backslash \{i,j\}$. By \eqref{eq_add_d} we know that
   $\sigma^l(\epsilon_kL_k)$ and
    $\sigma^l(\epsilon_kR_k)$ satisfy a quadratic equation with coefficients
    which differ only  by $o(1)$. It is easy to see that such a  quadratic equation has roots very close integers. Hence
    the difference
     between $\sigma^l(\epsilon_kL_k)$ and
    $\sigma^l(\epsilon_kR_k)$  is either $o(1)$ or bigger than $\frac 12$. By \eqref{eq:add_1} we conclude
     $\sigma^l _k(\epsilon_kL_k)- \sigma^l_k(\epsilon_kR_k)=
    +o(1)$,
  a contradiction to \eqref{eq:add_1}.
 Hence we finish the proof.
\end{proof}

\begin{rem}\label{lem3.4} Lamma \ref{lem:03} holds true, if   $\big(\sigma^1_k(\epsilon_kR^1_k), \sigma^2_k(\epsilon_kR^1_k), \sigma^3_k(\epsilon_kR^1_k)\big)\in \mathbf{V}+o(1)$
	is replaced by $\dist ( \big(\sigma^1_k(\epsilon_kR^1_k), \sigma^2_k(\epsilon_kR^1_k), \sigma^3_k(\epsilon_kR^1_k)\big),  \mathbb{V} ) < \delta$
	for a  small enough constant $\delta>0$. In this case, the  inequalities \eqref{eq_add_c} in   Lemma \label{lem.04} is changed to
 $$\frac{d}{dr} \overline{u}^i_k(r)\leq \frac{-4+4\delta +o(1)}{r}
	\quad \hbox{ and } \quad   \frac{d}{dr} \overline{u}^j_k(r)\geq \frac{1-\delta+o(1)}{r},$$
	while the arguments for \eqref{eq:add_2} and \eqref{equation:09} still work. Hence \eqref{eq_add_d} holds. Then
	 the local Pohozaev identity implies the conclusion.
	\end{rem}

\

Now, we state Proposition \ref{prop:01}.

\begin{prop}\label{prop:01}
  Suppose there exists a sequence  $R^1_k\to+\infty$ such that
  $R^1_k=o(1) \tau_k/\epsilon_k$ and $$\big(\sigma^1_k(\epsilon_kR^1_k), \sigma^2_k(\epsilon_kR^1_k), \sigma^3_k(\epsilon_kR^1_k)\big)=(4n_1,4n_2,4n_3)+o(1)\in\mathbf{V}+o(1).$$ Then we have:
\begin{itemize}
  \item[(1)] For any $R^2_k\geq R^1_k$ with ${R^2_k}\slash{R^1_k}\to +\infty$ and  $R^2_k=o(1) \tau_k/\epsilon_k$, if $u^i_k (i=1,2,3)$ have fast decay on $ \partial B_{\epsilon_kR^2_k}$, then there holds $$\big(\sigma^1_k(\epsilon_kR^2_k), \sigma^2_k(\epsilon_kR^2_k), \sigma^3_k(\epsilon_kR^2_k)\big)\in\mathbf{V}+o(1).$$
  \item[(2)] On $\partial B_{\tau_k}$, either $u^1_k,\ u^2_k,\ u^3_k$ have fast decay on $ B_{\tau_k}$ and $$\big(\sigma^1_k(\tau_k), \sigma^2_k(\tau_k), \sigma^3_k(\tau_k)\big)\in\mathbf{V}+o(1),$$ or there exists
  one component $u^i_k$ with slow decay on $\partial B_{\tau_k}$ and $$\lim_{s\to 0}\lim_{k\to\infty}\big(\sigma^1_k(s\tau_k), \sigma^2_k(s\tau_k), \sigma^3_k(s\tau_k)\big)\in\mathbf{V}.$$
  \item[(3)] There exists at least one component $u^i_k$ such that  $u^i_k$ has fast decay on $\partial B_{\tau_k}$. 
Moreover,  if $u^i_k$  has fast decay on $\partial B_{\tau_k}$, then $\sigma^i_k(\tau_k)=4n+o(1)$ for some $n\in \mathds{N}$.
\end{itemize}

\end{prop}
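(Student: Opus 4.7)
The plan is to prove all three statements via a single iterative procedure that starts at the scale $r_k^1 := \epsilon_k R_k^1$ (where the hypothesis places the triple $(\sigma_k^1,\sigma_k^2,\sigma_k^3)(r_k^1)$ in $\mathbf{V}+o(1)$) and pushes the ``fast decay for all three components'' region outward toward $\tau_k$. At each inductive step two situations arise: either Lemma \ref{lem:05} lets me extend fast decay of all three components across an annulus $B_{\rho'_k}\setminus B_{\rho_k}$, in which case Lemma \ref{lem:03} shows the triple on $\partial B_{\rho'_k}$ still lies in $\mathbf{V}+o(1)$; or at some intermediate scale $s_k$ fast decay cannot be extended because a component $u_k^{i_0}$ exhibits slow decay, in which case I perform a secondary blow-up producing a bubble whose mass contribution to each $\sigma_k^i$ is a multiple of $4$, and after absorbing it we return to the first situation on a larger scale with an updated triple in $\mathbf{V}+o(1)$.

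The secondary blow-up is performed by setting $\tilde v_k^i(x) := u_k^i(s_k x) + 2\log s_k$. The oscillation estimate Lemma \ref{lem:02} controls the harmonic part of $u_k^i$ in the annulus between $r_k^1$ and $s_k$, so the limit of $\tilde v_k^i$ satisfies either the Liouville equation \eqref{liou} or the system \eqref{System2}, but with a Dirac-mass singularity at the origin whose strength is determined by $(\sigma_k^1(r_k^1), \sigma_k^2(r_k^1), \sigma_k^3(r_k^1))$. The classification theorems of \cite{Lin-Wei-Ye} and \cite{Lin-Yang-Zhong} apply to such singular limits and force the mass contribution of the secondary bubble to each component to be a multiple of $4$. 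To rule out mass leaking in the annular region between the primary and secondary bubbles, I apply Remark \ref{lem3.4} (the approximate version of Lemma \ref{lem:03}) together with the local Pohozaev identity Lemma \ref{lem:Pohozaev}; this pair of tools is precisely what forces the updated triple back onto $\mathbf{V}+o(1)$ after the absorption.

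Since each secondary bubble absorbs a definite positive amount of energy, the total energy bound \eqref{equ:07} forces the iteration to terminate after finitely many steps, giving statement (1) and the first alternative of statement (2). The second alternative of (2) and statement (3) follow from the boundary behaviour at $r=\tau_k$: the defining property of $\tau_k$ together with the selection Lemma \ref{lem:01} preclude all three components from having slow decay on $\partial B_{\tau_k}$ simultaneously (otherwise one could extract an additional point of $\Sigma_k$ inside $B_{\tau_k}$), so at least one $u_k^i$ has fast decay there. For any such fast-decay component, its mass $\sigma_k^i(\tau_k)$ is a sum of the primary bubble contribution (a multiple of $4$ by the regular classification in the appendix) and the contributions of finitely many secondary bubbles (multiples of $4$ by the singular classification), which gives $\sigma_k^i(\tau_k) = 4n + o(1)$.

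The main obstacle will be the secondary blow-up analysis: one needs a sharp identification of the singular source strength in the limiting equation for $\tilde v_k^i$, which requires combining the oscillation estimate with the quantitative mass information accumulated in the previous stages of the iteration. A related subtlety, absent in the sinh-Gordon case of \cite{Jevnikar-Wei-Yang}, is that two of the three masses can in principle change simultaneously during a transition, so the ``no mass loss'' conclusion genuinely requires Lemma \ref{lem:02} and the Pohozaev identity used in tandem --- this is exactly the point the authors identify as the main novelty over \cite{Jevnikar-Wei-Yang}.
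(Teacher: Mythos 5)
Your proposal follows essentially the same route as the paper's proof: iterate outward from the scale $\epsilon_k R^1_k$, alternating between annuli where all three components decay fast (where Lemma \ref{lem:03} together with Remark \ref{lem3.4} and the local Pohozaev identity keep the triple on $\mathbf{V}+o(1)$) and transition scales where one component has slow decay, at which point a secondary blow-up at $s_k=\epsilon_k L_k$ produces a singular Liouville or Toda bubble classified by \cite{Lin-Yang-Zhong}, with no mass loss in the intermediate annulus thanks to the oscillation estimate and the Pohozaev identity; termination follows from the energy bound. This is precisely Steps 1--4 of the paper's argument for part (1), and your treatment of (2) and the ``moreover'' part of (3) matches the paper as well.

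One step, however, is justified incorrectly. For the first claim of part (3) you assert that all three components cannot simultaneously have slow decay on $\partial B_{\tau_k}$ because ``otherwise one could extract an additional point of $\Sigma_k$ inside $B_{\tau_k}$''. Slow decay only means $\sup_{\partial B_{\tau_k}}\big(u^i_k+2\log|x|\big)\ge -C$, which is perfectly compatible with the upper bound $u^i_k(x)+2\log\dist(x,\Sigma_k)\le C$ from Lemma \ref{lem:01}(6) (note that $\dist(x,\Sigma_k)=\tau_k=|x|$ on $\partial B_{\tau_k}$), so simultaneous slow decay produces no new blow-up point and the selection process cannot be invoked this way. The conclusion is still true, but the argument must be different: the paper derives it from Lemma \ref{lem:04}, which at the initial radius $\epsilon_k R^3_k$ of the last iteration yields a component with $\frac{d}{dr}\overline u^{\,i}_k(r)\le \frac{-4+o(1)}{r}$; integrating this forces that component to have fast decay all the way out to $\partial B_{\tau_k}$. (Alternatively, the constraint $u^1_k+u^2_k+2u^3_k=0$ gives $(u^1_k+2\log|x|)+(u^2_k+2\log|x|)+2(u^3_k+2\log|x|)=8\log\tau_k\to-\infty$ pointwise, and combined with the oscillation estimate around the circle this also yields one fast-decay component.) A second, smaller omission: you leave implicit why each secondary bubble carries a definite positive amount of energy --- the paper's Step 2, proved by contradiction via the Pohozaev identity --- without which neither the nontriviality of the singular bubble nor the termination of the iteration is established.
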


\begin{proof} (1)
Suppose there exists $\delta_0>0$ such that \begin{equation}
\label{contro}
|\big(\sigma^1_k(\epsilon_kR^2_k), \sigma^2_k(\epsilon_kR^2_k), \sigma^3_k(\epsilon_kR^2_k)\big)-\big(\sigma^1_k(\epsilon_kR^1_k), \sigma^2_k(\epsilon_kR^1_k), \sigma^3_k(\epsilon_kR^1_k)\big)|>
\delta_0,\end{equation} when $k$ is big, otherwise we have done.
By Lemma \ref{lem:04}, we consider only the case that
\begin{equation}\label{equation:23}
\frac{d}{dr} \overline{u}^1_k(r)=\frac{-4n_1+4n_3}{r}\leq \frac{-4+o(1)}{r},\ \ \frac{d}{dr} \overline{u}^3_k(r)=\frac{2n_1+2n_2-4n_3}{r}\geq \frac{1+o(1)}{r},\ \ r=\epsilon_kR_k^1.
\end{equation} The other cases can be studied similarly.
 We obtain a contradiction in several steps.

\

\noindent\textbf{Step  1.}  By fixing $\delta \in (0,\min\{\delta_0,\frac{1}{100}\})$  we can  find
a special radius $L_k\in [R^1_k,R^2_k]$ with $L_k=o(1) R_k^2$ such that  at least for one $i$,  $u^i_k$ has slow decay on $\partial B_{\epsilon_kL_k}$ and $$\frac{\delta}{2}\leq \max_{i=1,2,3}\sigma^i_k(\epsilon_kL_k)-\sigma^i_k(\epsilon_kR^1_k)\leq\delta\leq\frac{1}{100}.$$
Moreover, $u^1_k$ has fast decay on $B_{\epsilon_kL_k}\backslash B_{\epsilon_kR_k}$.

\

From  \eqref{contro} it is clear  that there exist  $L^1_k,\ L^2_k\in [R^1_k,R^2_k]$ such that
\begin{equation}\label{equation:15}
\max_{i=1,2,3}\sigma^i_k(\epsilon_kL^1_k)-\sigma^i_k(\epsilon_kR^1_k)=\frac{\delta}{2},\ \ \max_{i=1,2,3}\sigma^i_k(\epsilon_kL^2_k)-\sigma^i_k(\epsilon_kR^1_k)=\delta.
\end{equation}
We can show that there exist  $i$ and $L_k\in [L^1_k,L^2_k]$ such that $u^i_k$ has slow decay on $\partial B_{\epsilon_kL_k}$. Otherwise, $u^1_k,\ u^2_k,\ u^3_k$ have fast decay in $B_{\epsilon_kL^2_k}\setminus B_{\epsilon_kL^1_k}$. From the assumption of the Proposition and \eqref{equation:15} we have that $\dist \big( (\s_k^1(\epsilon_kL_k^1),\s_k^2(\epsilon_kL_k^1),\s_k^2(\epsilon_kL_k^1)
),  \mathbb{V}\big ) <2\delta$. Hence
 we can use Remark \ref{lem3.4}
to show that
 $$\max_{i=1,2,3}\sigma^i_k(\epsilon_kL^2_k)-\sigma^i_k(\epsilon_kL^1_k)=o(1),$$  which contradicts  \eqref{equation:15}. Hence
  $u^i_k$ has slow decay on $\partial B_{\epsilon_kL_k}$. By Lemma \ref{lem:05}  we know that $L_k=o(1) R^2_k$.
From \eqref{equation:15}, we have
\begin{equation}\label{equation:06}
\frac{\delta}{2}\leq \max_{i=1,2,3}\sigma^i_k(\epsilon_kL_k)-\sigma^i_k(\epsilon_kR^1_k)\leq\delta\leq\frac{1}{100}.\end{equation}
By \eqref{equation:12}, \eqref{equation:06} and \eqref{equation:23}, we have $$\frac{d}{dr}\overline{u}^1_k\leq \frac{-4+3\delta+o(1)}{r},\ \quad\epsilon_kR^1_k\leq r\leq \epsilon_kL_k,$$
which implies $u^1_k$ has fast decay in $B_{\epsilon_kL_k}\setminus B_{\epsilon_kR^1_k}$.

\

\noindent
\textbf{Step 2.} We claim that there exist $\delta_1>0$ and a  constant $N$ such that
\begin{equation}\label{equation:08}
  \sigma^2_k(N\epsilon_kL_k)-\sigma^2_k(\epsilon_kL_k)+\sigma^3_k(N\epsilon_kL_k)-\sigma^3_k(\epsilon_kL_k)\geq \delta_1,
\end{equation} when $k$ is big enough.

\

If not, then there exists  $\tilde{ R}_k\to+\infty$ with $\tilde{ R}_k\epsilon_kL_k\leq \epsilon_kR^2_k$, such that
\begin{equation}\label{equation:11}
\sigma^2_k(\tilde{ R}_k\epsilon_kL_k)-\sigma^2_k(\epsilon_kL_k)+\sigma^3_k(\tilde{ R}_k\epsilon_kL_k)-\sigma^3_k(\epsilon_kL_k)=o(1).\end{equation}
Since by Step 1  $u^1_k$ has fast decay in $B_{\epsilon_kL_k}\setminus B_{\epsilon_kR^1_k}$,
as in the proof of Lemma \ref{lem:03}
 we have $$\sigma^1_k(\epsilon_kL_k)-\sigma^1_k(\epsilon_kR^1_k)=o(1).$$ By Lemma \ref{lem:05}, there exists $R_k\to +\infty$
 with $R_k=o(1)\tilde{ R}_k$ such that $u^1_k$ has fast decay in $B_{R_k\epsilon_kL_k}\backslash B_{\epsilon_kL_k}$ and $$\sigma^1_k(R_k\epsilon_kL_k)-\sigma^1_k(\epsilon_kL_k)=o(1).$$
 Hence  we have that $u^1_k$ has fast decay in $B_{R_k\epsilon_kL_k}\backslash B_{\epsilon_kR^1_k}$ and
\begin{equation}\label{equation:10}
\sigma^1_k(R_k\epsilon_kL_k)-\sigma^1_k(\epsilon_kR^1_k)=o(1).\end{equation}

We claim first that  $u^3_k$ also  has fast decay on $\partial B_{\epsilon_kL_k}$. If not, by  \eqref{equation:09}, \eqref{equation:06}, \eqref{equation:10}, \eqref{equation:11} and \eqref{equation:23}, we have
 \begin{align}\label{equation:21}
   \frac{d}{dr} \overline{u}^3_k(r)\geq \frac{1-3\delta+o(1)}{r}\geq \frac{b}{r},\  \ \forall\ \epsilon_kR^1_k\leq |x|\leq R_k\epsilon_k L_k,
 \end{align} for some $b>0$, which implies $$\overline{u}^3_k(r)\geq \overline{u}^3_k(\epsilon_kL_k)+b\log\frac{r}{\epsilon_kL_k},    \ \forall\ \epsilon_kR^1_k\leq |x|\leq R_k\epsilon_k L_k$$
 and $$\int_{B_{R_k\epsilon_kL_k}\setminus B_{\epsilon_kL_k}}e^{u^3_k}dx\geq C\int_{B_{R_k\epsilon_kL_k}\setminus B_{\epsilon_kL_k}}e^{\overline{u}^3_k}dx\geq CR_k^{2+b}\to+\infty,$$ as $k\to\infty$, since $u^3_k$ has slow decay on $\partial B_{\epsilon_kL_k}$, where  we have used  Lemma \ref{lem:02} in the first inequality.
 This is also a contradiction.

  Since  now $u^3_k$ has fast decay on $\partial B_{\epsilon_kL_k}$ and satisfies \eqref{equation:21}, by the same computation given in  \eqref{inequ:01}, we get
 \begin{equation}
   \sigma^3_k(\epsilon_kL_k)-\sigma^3_k(\epsilon_kR^1_k)=o(1),
 \end{equation} which implies (from \eqref{equation:11})
 \begin{equation}\label{equation:16}
   \sigma^3_k(R_k\epsilon_kL_k)-\sigma^3_k(\epsilon_kR^1_k)=o(1).
 \end{equation}

By Lemma \ref{lem:05}, there exists $l_k\to\infty$ and $l_k=o(1)R_k$ such that  $u^1_k,\ u^2_k,\ u^3_k$ have fast decay on $\partial B_{l_k\epsilon_kL_k}$. Hence the local Pohozaev identity holds at $r=l_kr_k\epsilon _k$.
 Noting that
\begin{align*}
\sigma^1_k(R_k\epsilon_kL_k)-\sigma^1_k(\epsilon_kR^1_k)&=o(1),\\
\sigma^2_k(R_k\epsilon_kL_k)-\sigma^2_k(\epsilon_kR^1_k)&\leq\delta +o(1),\\
\sigma^3_k(R_k\epsilon_kL_k)-\sigma^3_k(\epsilon_kR^1_k)&=o(1)
\end{align*}  and  $\delta <\frac{1}{100}$,
 by  the Pohozaev identity \eqref{equation:pohozaev} $$\big(\sigma^1_k(r)-\sigma^3_k(r)\big)^2+ \big(\sigma^2_k(r)-\sigma^3_k(r)\big)^2=4\big(\sigma^1_k(r)+\sigma^2_k(r)+
2\sigma^3_k(r)\big)+o(1),\ \ r=l_k\epsilon_kL_k,$$  we have $$\sigma^i_k(l_k\epsilon_kL_k)-\sigma^i_k(\epsilon_kR^1_k)=o(1),\ i=1,2,3.$$
This is a contradiction to \eqref{equation:06}. Thus, \eqref{equation:08} holds.

\

\noindent
\textbf{Step 3:} There exists $\alpha_k\to +\infty$ such that $\alpha_kL_k=o(1)R^2_k$, $u^1_k,\ u^2_k,\ u^3_k$ have fast decay on $\partial B_{\alpha_k\epsilon_kL_k}$ and $$\big(\sigma^1_k(\alpha_k\epsilon_kL_k),\sigma^2_k(\alpha_k\epsilon_kL_k),\sigma^3_k(\alpha_k\epsilon_kL_k)\big)=(4n_1,4n_2'',4n_3'')+o(1)\in \mathbf{V}+o(1),$$ where $n_2''\geq n_2,\ n_3''\geq n_3,\ n_2''+n_3''>n_2+n_3$.

\
This is the main step in the proof.
Since $u^1_k$ has fast decay on $\partial B_{\epsilon_kL_k}$,
by Lemma \ref{lem:05}, there exists $R_k\nearrow+\infty$ (for the  simplicity of notation still denoted by $R_k$) such that $R_kL_k=o(1)R^2_k$ and $u^1_k$ still has fast decay in $B_{R_k\epsilon_kL_k}\setminus B_{\epsilon_kL_k}$ and $$\sigma_1(R_k\epsilon_kL_k)-\sigma_1(\epsilon_kL_k)=o(1).$$

Now, we do the blow-up argument in $B_{R_k\epsilon_k L_k}$. Let $$w^i_k(x):=u^i_k(\epsilon_kL_kx)+2\log(\epsilon_kL_k),\ \quad |x|\leq R_k,\ \  i=1,2,3.$$ By Lemma \ref{lem:01}, we know $$\max_{i=1,2,3}u^i_k(x)+2\log |x|\leq C,\ \quad  \forall \ |x|\leq \tau_k.$$ Then it is easy to see that  for any $ a>0$ there is a constant $C(a)>0$ such that  $$\max_{i=1,2,3}w^i_k(x)\leq C(a),\ \ 0<a\leq |x|\leq R_k;\ \ w^1_k(x)\to -\infty, \ \ \forall \ a \leq |x|\leq R_k.$$
Noticing that $$\frac 1{2\pi}\sum_{i=2}^3\int_{B_N\setminus B_1}e^{w^i_k}dx\geq \delta_1 $$ from \eqref{equation:08} and that  $w^i_k,\ i=1,2,3$ also satisfy equations  \eqref{equ:03}-\eqref{equ:05},
by the standard elliptic theory we have  that $w^1_k(x)\to -\infty,$ locally uniformly in $\R^2\setminus \{0\}$ and one of the following statements holds:
\begin{itemize}
  \item[(a)] $w^2_k\to -\infty$  locally uniformly in $\R^2\setminus \{0\}$ and $w^3_k\to w^3$ in $C^2_{loc}(\R^2\setminus \{0\})$  where $w^3$ satisfies
  \begin{equation}
  \label{eq_addf1}
 -\Delta w^3=-4\pi b_1\delta_0+e^{w^3}\ \quad \hbox{ in }\ \R^2, \end{equation}
  \item[(b)] $w^3_k\to -\infty$  locally uniformly in $\R^2\setminus \{0\}$ and $w^2_k\to w^2$ in $C^2_{loc}(\R^2\setminus \{0\})$  where $w^2$ satisfies $$-\Delta w^2=-4\pi b_2\delta_0+e^{w^2}\ \ in\ \R^2,$$
  \item[(c)] $(w^2_k,w^3_k)\to (w^2, w^3)$ in $C^2_{loc}(\R^2\setminus \{0\})$  where $(w^2,w^3)$ satisfies
  \begin{align}\label{eq_add_f2}
    \begin{array}{rcl}
    -\Delta w^2&=& -4\pi b_2\delta_0+e^{w^2}-e^{w^3},\\
      -\Delta w^3&=& -4\pi b_1\delta_0-\frac{1}{2}e^{w^2}+e^{w^3},
    \end{array} \quad \hbox{ in } \ \R^2,
  \end{align}
\end{itemize}
where $b_1=n_1+n_2-2n_3$, $b_2 =2(n_3-n_2)$ and $\delta_0$ is the Dirac measure at the point $0$. One can check that in  each case $b_1$ or/and $b_2$ are non-negative. In fact,
in case (a) that $b_1$ is nonnegative follows from \eqref{equation:21}.
In case (b)
 $b_2$ is also  nonnegative,  otherwise,  $$2(n_3-n_2)\leq -2.$$ By \eqref{equation:12}, we get $$\frac{d}{dr} \overline{u}^2_k(r)=\frac{-\sigma^2_k(r)+\sigma^3_k(r)}{r}=\frac{-4n_2+4n_3+o(1)}{r}\leq\frac{-4+o(1)}{r},\  \ r=\epsilon_kR^1_k.$$ Then we can prove  that $u^2_k$ has the similar properties as $u^1_k$. This implies that $w^2_k\to -\infty$  locally uniformly in $\R^2\setminus \{0\}$ which is a contradiction. For case (c), it is
 similar.

We now  claim: for any $s_k\in (\epsilon_kR^1_k,\epsilon_kL_k)$ with  $s_k=o(1)\epsilon_kL_k$,  there holds
\begin{equation}\label{equation:20}
  \sigma^i_k(s_k)-\sigma^i_k(\epsilon_kR^1_k)=o(1),\ \ i=1,2,3.
\end{equation} To prove this claim, we first assume $u^1_k, u^2_k, u^3_k$ have fast decay on $\partial B_{s_k}$.
Since $u^3_k$ has fast decay on $\partial B_{s_k}$ and \eqref{equation:09} holds for $\epsilon_kR^1_k\leq |x|\leq s_k$, by \eqref{inequ:01}, we have $$\sigma^3_k(s_k)-\sigma^3_k(\epsilon_kR^1_k)=o(1).$$  In view of  $$\sigma^1_k(s_k)-\sigma^1_k(\epsilon_kR^1_k)=o(1),\  \ \sigma^2_k(s_k)- \sigma^2_k(\epsilon_kR^1_k)\leq \delta,$$ where $\delta$ is chosen as before, by the Pohozaev identity $$\big(\sigma^1_k(r)-\sigma^3_k(r)\big)^2+ \big(\sigma^2_k(r)-\sigma^3_k(r)\big)^2=4\big(\sigma^1_k(r)+\sigma^2_k(r)+2\sigma^3_k(r)\big)+o(1),\ \ r=s_k,$$ we get that \eqref{equation:20} holds. If there exists $i\in\{1,2,3\}$ such that $u^i_k$ has slow decay on $\partial B_{s_k}$, since $s_k=o(1)\epsilon_kL_k$, by Lemma \ref{lem:05}, there exists $s_k'$ such that ${s_k'}\slash{s_k}\to +\infty$ and $s_k'=o(1)\epsilon_kL_k$ and $u^1_k, u^2_k, u^3_k$ have fast decay on $\partial B_{s'_k}$. Thus, the same argument implies that
 $$\sigma^i_k(s'_k)-\sigma^i_k(\epsilon_kR^1_k)=o(1),\ \ i=1,2,3,$$ and hence \eqref{equation:20} follows.

By the classification results in \cite{Lin-Yang-Zhong}, for above three cases,  we have $$\big(\int_{\R^2}e^{w^2}dx,\int_{\R^2}e^{w^3}dx\big)=(8\pi n'_2, 8\pi n'_3),\ \ n'_1, \ n'_2\in\mathds{N}\cup\{0\}.$$  Choose a sequence of $\gamma_k\nearrow +\infty$ such that,
\begin{eqnarray}
\label{eq_add_e1}
\|w^2_k-w^2\|_{C^2(B_{\gamma_k}\setminus B_{\frac{1}{\gamma_k}})}+ \|w^3_k-w^3\|_{C^2(B_{\gamma_k}\setminus B_{\frac{1}{\gamma_k}})}=o(1)
\end{eqnarray}
and
\begin{eqnarray}
\label{eq_add_e2}
\big(\int_{B_{\gamma_k}\setminus B_{\frac{1}{\gamma_k}}}e^{w_k^2}dx,\int_{B_{\gamma_k}\setminus B_{\frac{1}{\gamma_k}}}e^{w_k^3}dx\big)=(8\pi n'_2, 8\pi n'_3)+o(1).
\end{eqnarray}

	Using Lemma \ref{lem:05} again, we get that there exists $\alpha_k\nearrow+\infty$ with $\alpha_k=o(1) \min\{\gamma_k,R_k,\ \frac{L_k}{R^1_k}\}$, such that $u^1_k,u^2_k,u^3_k$ have fast decay on $\partial B_{\alpha_k\epsilon_kL_k}$. Now we  estimate $\int_{B_{\alpha_k}}e^{w_k^i}dx$ for $ i=2,3$ as follows
	$$\int_{B_{\alpha_k}}e^{w_k^2}dx= \int_{B_{\alpha_k}\setminus B_{\frac{1}{\alpha_k}}}e^{w_k^2}dx+ \int_{ B_{\frac{1}{\alpha_k}}}e^{w_k^2}dx:=I+II.$$
	Since $ \alpha_k= o(1) \gamma_k$, by \eqref{eq_add_e1} we have
	$$I =  \int_{B_{\alpha_k}\setminus B_{\frac{1}{\alpha_k}}}e^{w^2}dx +o(1)  = 8\pi n_2'+o(1).
	$$
	By using claim \eqref{equation:20} with $s_k =1\slash \alpha_k$ we have
	$$II= \int_{B_{\frac 1 {\alpha_k} \epsilon_k L_k}} e^{u^2_k}dx =
	2\pi \s_k^2 (\epsilon_kR^1_k)+o(1) = 8\pi n_2 +o(1).	$$
Hence we have 	$$\int_{B_{\alpha_k}}e^{w_k^2}d x= 8\pi (n^2+n'_2)+o(1).$$
Similarly we have $$\int_{B_{\alpha_k}}e^{w_k^3}dx= 8\pi (n^3+n'_3)+o(1).$$
	


\

\noindent
\textbf{Step 4:} We prove the conclusion $(1)$ by a induction argument.

\

Now we denote $\alpha_k\epsilon_kL_k$ as an new value of $R^1_k$ and repeat above processes. Since the  energies are uniformly bounded and after each step one of the local energy changes by a positive number, the process stops after finite times. We proved  $(1)$.

\

(2). Using the same argument in (1),
 the process stops after finite times, we can easily conclude that there exists $R^3_k\in [R^1_k, \tau_k\slash \epsilon_k]$ which is viewed as the initial value in the last step, such that $R^3_k=o(1)\frac{\tau_k}{\epsilon_k}$ and   $$\big(\sigma_1(\epsilon_kR^3_k), \sigma_2(\epsilon_kR^3_k), \sigma_3(\epsilon_kR^3_k)\big)=(4m_1,4m_2,4m_3)+o(1)\in\mathbf{V}+o(1),$$ for some $m_i\in\mathds{N}\cup \{0\},\ m_i\geq n_i,\ i=1,2,3$ and one of the following alternatives holds:
\begin{itemize}
  \item[(i)] $u^1_k,\ u^2_k,\ u^3_k$ have fast decay in $B_{\tau_k}\setminus B_{\epsilon_kR^3_k}$;
  \item[(ii)] There exist $i\in\{1,2,3\}$ and $L_k\in [R^3_k, \tau_k/\epsilon_k]$ such that $L_k=O(1)\tau_k/\epsilon_k$ and $u^i_k$ has slow decay on $\partial B_{\epsilon_kL_k}$.
\end{itemize}

By Lemma \ref{lem:03}, it is easy to see that  case (i) leads to  the first conclusion of $(2)$ immediately. If $(i)$ does not hold, then there exist $i\in\{1,2,3\}$ and $L_k\in [R^3_k,\tau_k/\epsilon_k]$, such that $u^i_k$ has slow decay on $\partial B_{\epsilon_kL_k}$.
By the choice of $R^3_k$, we get $L_k=O(1)\tau_k/\epsilon_k$. Otherwise, $R^3_k$ is not in the last step.  Next, we just need to show the case $(ii)$ yields the second conclusion of $(2)$. By the above arguments, we know there exists one component $i$ (still assume $i=1$) such that $$\sigma^1_k(\epsilon_kL_k)-\sigma^1_k(\epsilon_kR^3_k)=o(1)$$ and $$(w^2_k(x),w^3_k(x))=\big(u^2_k(\epsilon_kL_kx)+2\log \epsilon_kL_k, u^3_k(\epsilon_kL_kx)+2\log \epsilon_kL_k\big)\to (w^2,w^3),\ \ in\ \ C^2_{loc}(B_1\setminus \{0\}).$$ Moreover, there exists $\alpha_k\nearrow +\infty$ such that $$\sigma^i_k(\frac{1}{\alpha_k}\epsilon_kL_k)-\sigma^i_k(\epsilon_kR^3_k)=o(1)$$ and $$\int_{B_1\setminus B_{\frac{1}{\alpha_k}}}e^{w^i_k}dx-\int_{B_1\setminus B_{\frac{1}{\alpha_k}}}e^{w^i}dx=o(1),\ \ i=2,3.$$

Thus, for $i=2,3$, since $R^3_k=o(1)L_k$, we have
\begin{align*}
  \sigma^i_k(\epsilon_kR^3_k)\leq\sigma^i_k(s\epsilon_kL_k)&\leq \sigma^i_k(\frac{1}{\alpha_k}\epsilon_kL_k)+ \sigma^i_k(s\epsilon_kL_k)-\sigma^i_k(\frac{1}{\alpha_k}\epsilon_kL_k)\\
  &=\sigma^i_k(\epsilon_kR^3_k)+o(1)+2\pi \int_{B_s\setminus B_{\frac{1}{\alpha_k}}}e^{w^i}dx,
\end{align*}
which immediately implies
\begin{align*}
\lim_{s\to 0}\lim_{k\to\infty}\big(\sigma^1_k(s\epsilon_kL_k), \sigma^2_k(s\epsilon_kL_k), \sigma^3_k(s\epsilon_kL_k)\big)=\lim_{k\to\infty}\big(\sigma^1_k(\epsilon_kR^3_k), \sigma^2_k(\epsilon_kR^3_k), \sigma^3_k(\epsilon_kR^3_k)\big)\in\mathbf{V}.\end{align*} Then $$\lim_{s\to 0}\lim_{k\to\infty}\big(\sigma^1_k(s\tau_k), \sigma^2_k(s\tau_k), \sigma^3_k(s\tau_k)\big)\in\mathbf{V},$$ since $L_k=O(1)\tau_k/\epsilon_k$.

\

(3)  We prove  statement  $(3)$.

\

The first part follows immediately from the fact that there exists $i\in\{1,2,3\}$ such that $$\frac{d}{dr} \overline{u}^i_k(r)\leq \frac{-4+o(1)}{r},\ \ r=\epsilon_kR_k^3,$$ where $R^3_k$ is as in the proof of (2),  which is the initial value in the last
iteration. 

\

If  $u^i_k$ has fast decay on $\partial B_{\tau_k}$, then $u^i_k$ has fast decay in $B_{\tau_k}\setminus B_{\epsilon_kR^3_k}$ where $R^3_k$ is defined in the proof of (2).  Then
\begin{align*}
 \lim_{k\to\infty} \sigma^i_k(\tau_k)=\lim_{s\to 0}\lim_{k\to\infty}\sigma^i_k(s\tau_k)+ \lim_{s\to 0}\lim_{k\to\infty}\big(\sigma^i_k(\tau_k)-\sigma^i_k(s\tau_k)\big)=4n,
\end{align*} where the last equality follows from conclusion $(2)$ and the fact that
\begin{align*}
  \lim_{s\to 0}\lim_{k\to\infty}\big(\sigma^i_k(\tau_k)-\sigma^i_k(s\tau_k)\big)&=\lim_{s\to 0}\lim_{k\to\infty}\int_{B_{\tau_k}\setminus B_{s\tau_k}}e^{u^i_k}dx\\
  &=\lim_{s\to 0}\lim_{k\to\infty}e^{-N_k}\int_{B_{\tau_k}\setminus B_{s\tau_k}}\frac{1}{|x|^2}dx=0.
\end{align*}

We finished the proof of this proposition.

\end{proof}

\section{Proof of main theorem \ref{thm:main}}\label{sec:main-theorem}

\

In this section, we will prove our main  Theorem \ref{thm:main}. We first recall the definition of the Group given by \cite{Lin-Wei-Zhang} which is very useful to differentiate the bubble areas.

\begin{defn}\label{def}
Let $G:=\{p^1_k,...,p^m_k\}$ be a subset of $\Sigma_k$ with at least two points. $G$ is called a {\it group} if
\begin{itemize}
  \item[(1)] $\dist(p^i_k,p^j_k)\sim  \dist(p^s_k,p^t_k)$ for any points  $p^i_k,p^j_k,p^s_k,p^t_k\in G$ with $i\neq j,\ s\neq t$.
  \item[(2)] For any $p^i_k,p^j_k\in G$ with $i\neq j$ and any $p_k\in \Sigma_k\setminus G$, there holds $$\frac{\dist(p^i_k,p^j_k)}{\dist(p^i_k,p_k)}\to 0.$$
\end{itemize}
Here two sequences of positive numbers $\{a_k\}$ and $ \{b_k\}$  satisfying  $\{a_k\} \sim \{b_k\}$ means that $C^{-1} \le a_k/b_k \le C$, $\forall k$, for some constant $C>0$.
\end{defn}

\

As above by a standard translation argument we may assume $0\in\Sigma_k$. Let $G_1=\{0, x^1_k,...,x^m_k\}$ be the group containing $0$. Denote $$\varepsilon_k^1:=\frac{1}{2}\dist(G_1,\Sigma_k\setminus G_1).$$ Recall  that $\tau_k=\frac{1}{2}\dist(0,\Sigma_k\setminus \{0\})$.

\begin{prop}\label{prop:02}
Let $(u^1_k,\ u^2_k,\ u^3_k)$ be the solution of \eqref{equ:03}-\eqref{equ:06}. Then we have
\begin{itemize}
\item[(1)] For any $s_k\to 0$ with ${s_k}/{\tau_k}\to +\infty$ and $s_k=o(1)\varepsilon^1_k$, suppose $u^1_k,\ u^2_k,\ u^3_k$ have fast decay on $\partial B_{s_k}$. Then $$\big(\sigma^1_k(s_k), \sigma^2_k(s_k), \sigma^3_k(s_k)\big)\in\mathbf{V}+o(1).$$
  \item[(2)] On $\partial B_{\varepsilon_k^1}$, either $u^1_k,\ u^2_k,\ u^3_k$ have fast decay on $ B_{\varepsilon_k^1}$ and $$\big(\sigma^1_k(\varepsilon_k^1), \sigma^2_k(\varepsilon_k^1), \sigma^3_k(\varepsilon_k^1)\big)\in\mathbf{V}+o(1),$$ or there exists one component $u^i_k$ has slow decay on $\partial B_{\varepsilon_k^1}$ and $$\lim_{s\to 0}\lim_{k\to\infty}\big(\sigma^1_k(s\varepsilon_k^1), \sigma^2_k(s\varepsilon_k^1), \sigma^3_k(s\varepsilon_k^1)\big)\in\mathbf{V}.$$
  \item[(3)] There exists at least one component $u^i_k$ such that  $u^i_k$ has fast decay on $\partial B_{\varepsilon_k^1}$.
   Moreover, if $u^i_k$ has fast decay on $\partial B_{\varepsilon_k^1}$, then $\sigma^i_k(\varepsilon_k^1)=4n+o(1)$ for some $n\in \mathds{N}$.
\end{itemize}
\end{prop}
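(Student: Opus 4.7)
This proposition is the group-level counterpart of Theorem \ref{thm:01}: what that theorem says at a single blow-up point at scale $\tau_k$ must persist for the entire group $G_1$ at scale $\varepsilon_k^1$. My strategy is to first construct an intermediate radius $r_k$ with $\tau_k = o(1)\,r_k$ and $r_k = o(1)\,\varepsilon_k^1$ such that $(\sigma_k^1(r_k),\sigma_k^2(r_k),\sigma_k^3(r_k))\in\mathbf{V}+o(1)$, and then invoke the iteration mechanism of Proposition \ref{prop:01} between $r_k$ and $\varepsilon_k^1$. Since $B_{\varepsilon_k^1}$ contains no points of $\Sigma_k\setminus G_1$, the analysis of Proposition \ref{prop:01} applies directly once a suitable starting scale is secured.

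To produce the starting scale, I would first apply Theorem \ref{thm:01} around each $x^j_k\in G_1$, viewing it as an isolated blow-up with its own scale $\tau_k^{(j)}\sim\mathrm{diam}(G_1)$. This yields at each $x^j_k$ either fast decay of all three components on $\partial B_{\tau_k^{(j)}}(x^j_k)$ with mass in $\mathbf{V}+o(1)$, or a slow-decay alternative; in the latter case, the singular-bubble analysis from Step 3 of the proof of Proposition \ref{prop:01}, combined with the Lin--Yang--Zhong classification for singular sources, promotes the point to a slightly larger scale around $x^j_k$ with all three components in fast decay and mass still in $\mathbf{V}+o(1)$. Since all these point-scales are comparable, iterated use of Lemma \ref{lem:05} then produces a single radius $r_k$ with $\mathrm{diam}(G_1) = o(1)\, r_k$ and $r_k = o(1)\,\varepsilon_k^1$ at which $u^1_k,u^2_k,u^3_k$ simultaneously have fast decay on $\partial B_{r_k}$. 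By the mass-control provided by Lemma \ref{lem:05}, $\sigma_k^i(r_k) = \sum_j\sigma^i_k(\tau_k^{(j)},x^j_k) + o(1)$, so each component equals $4n_i+o(1)$ for some integer $n_i$. The Pohozaev identity (Lemma \ref{lem:Pohozaev}) applied at $\partial B_{r_k}$ then enforces the quadratic constraint defining $\mathbf{V}$, placing $(\sigma_k^1(r_k),\sigma_k^2(r_k),\sigma_k^3(r_k))\in\mathbf{V}+o(1)$.

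From this starting mass in $\mathbf{V}+o(1)$, the iteration of Proposition \ref{prop:01} runs verbatim with $\varepsilon_k^1$ replacing $\tau_k$: part (1) follows from Proposition \ref{prop:01}(1) applied to any fast-decay radius $s_k$ between $r_k$ and $\varepsilon_k^1$; parts (2) and (3) follow from Proposition \ref{prop:01}(2)--(3) at the outer scale $\varepsilon_k^1$. The main obstacle is the consolidation step above: one must coordinate the fast-decay annuli around the distinct points of $G_1$, using Lemmas \ref{lem:02} and \ref{lem:05} to ensure they can be merged at a common radius $r_k$, and one must verify that each slow-decay contribution at an individual $x^j_k$ still produces a multiple of $4$ after the corresponding singular-bubble correction. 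Once this combined base scale is fixed, the rest is a faithful reapplication of the machinery already developed in Section \ref{sec:local-blowup-behavior}.
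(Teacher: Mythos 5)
Your overall architecture (secure a base radius $r_k$ with masses in $\mathbf{V}+o(1)$, then run the iteration of Proposition \ref{prop:01} out to $\varepsilon_k^1$) matches the paper, and your Case-1-type reasoning (all components fast decaying at the group scale, masses adding over the group points, Pohozaev at the merged radius) is essentially the paper's first case. But there is a genuine gap in how you handle the slow-decay alternative, and it is precisely the step where the group structure matters.

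You propose that if some component has slow decay on $\partial B_{\tau_k^{(j)}}(x^j_k)$, then a Step-3-style singular-bubble analysis ``promotes the point to a slightly larger scale around $x^j_k$'' with all components fast decaying and mass in $\mathbf{V}+o(1)$. This cannot be done point by point. The points of $G_1$ are at mutual distance comparable to $\tau_k\sim\tau_k^{(j)}$, so any radius around $x^j_k$ that is $o(1)^{-1}\tau_k^{(j)}$ already engulfs the other group points; there is no room to ``promote'' one point independently of the others. The slow decay at the group scale is a collective phenomenon: the correct blow-up at scale $\tau_k$ produces a \emph{single} limit on $\R^2\setminus\{y^0,\dots,y^m\}$ solving the Liouville equation or System \eqref{System2} with Dirac sources at \emph{all} the rescaled group points $y^j$ simultaneously, with coefficients $2(n_3^j-n_2^j)$ and $n_1^j+n_2^j-2n_3^j$ read off from the masses accumulated at each $x^j_k$. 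The single-source analysis of Step 3 of Proposition \ref{prop:01} does not cover this; one needs the multi-source classification of Lin--Yang--Zhong, and before invoking it one must verify that every Dirac coefficient exceeds $-1$. The paper does this by ruling out $n_1^j+n_2^j-2n_3^j=-1$ through a parity argument: combining the Pohozaev relation $(n_3^j-n_1^j)^2+(n_3^j-n_2^j)^2=n_1^j+n_2^j+2n_3^j$ with $n_1^j+n_2^j-2n_3^j=-1$ forces $(n_3^j-n_1^j)(n_3^j-n_1^j-1)=2n_3^j-1$, an even number equal to an odd one. Your proposal contains no substitute for this admissibility check.

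A related consequence: your additivity claim $\sigma_k^i(r_k)=\sum_j\sigma_k^i(\tau_k^{(j)},x^j_k)+o(1)$ is false in the slow-decay case, because the slowly decaying components pick up an extra quantized energy $4n_i'$ from the multi-source singular bubble living in the region between and around the group points. The paper's final mass at the merged radius is $\sum_j 4n_i^j+4n_i'+o(1)$, not the bare sum over the points. So the consolidation step you flag as ``the main obstacle'' is indeed the obstacle, and the mechanism you sketch for it does not close.
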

\begin{proof}
   Denote $\tau^j_k=\frac{1}{2}\dist(x^j_k,\Sigma_k\setminus \{x_k^j\})$, $j=1,...,m$, $x^j_k\in G_1$.   Then by Definition \ref{def},  there  exists a constant $N>0$ independent of $k$ such that
    $$\tau_k=o(1)\varepsilon_k^1,\ \ G_1\subset B_{N\tau_k}(0),\ \ \frac{1}{N}\leq \frac{\tau^j_k}{\tau_k}\leq N,\ \  \cup_{j=1}^mB_{\tau_k^j}(x^j_k)\subset B_{N\tau_k}(0),\ \ j=1,...,m,\ \forall k.$$
  Let $$v^i_k(x)=u^i_k(\tau_kx)+2\log\tau_k,\ |x|\leq\frac{1}{\tau_k},\ \ i=1,2,3.$$ By Theorem \ref{thm:01}, there exists at least one component $i\in\{1,2,3\}$ such that $v^i_k$ has fast decay on $\partial B_1$. 
  	Without loss of generality, we assume $i=1$. Then $$\sigma^1_k(\tau_k)=4n^0_1+o(1),\  \ \lim_{s\to 0}\lim_{k\to\infty}\big(\sigma^1_k(s\tau_k),\sigma^2_k(s\tau_k),\sigma^3_k(s\tau_k) \big)=(4n^0_1,4n^0_2,4n^0_3)\in \mathds{N}.$$
  Let $y^j_k=\frac{x^j_k}{\tau_k}$, $j=1,...,m$. Then $|y^j_k|\leq N$. Since  $\frac{1}{N}\leq \frac{\tau^j_k}{\tau_k}\leq N$, passing to a subsequence we assume $y^j_k\to y^j$ and $\frac{\tau^j_k}{\tau_k}\to r_j$. By Lemma \ref{lem:02}, we know that $v^1_k$ has fast decay on $\partial B_{r_j}(y^j)$. By Theorem \ref{thm:01}, we get $$\sigma (r_j,y^j;v^1_k)=4n^j_1+o(1)$$ and $$ \lim_{s\to 0}\lim_{k\to\infty}\big(\sigma (sr_j,y^j;v^1_k),\sigma (sr_j,y^j;v^2_k),\sigma (sr_j,y^j;v^3_k) \big)=(4n^j_1,4n^j_2,4n^j_3)\in \mathbf{V}.$$ Moreover, by the proof of Proposition \ref{prop:01} (see Step 5), for each $j=0,1,...,m$ (here, when $j=0$, $x^j_k=0,\ \tau^j_k=\tau_k,\ y^0=0,\ r_0=1$),  there exists $s_k^j=o(1)r_j$ (which can be viewed as the initial data in the last step of the iteration near $y^j$), such that $v^1_k,\ v^2_k,\ v^3_k$ have fast decay on $\partial B_{s_k^j}(y^j)$ and
   \begin{equation}\label{equation:24}
   \big(\sigma (s^j_k,y^j;v^1_k),\sigma (s_k^j,y^j;v^1_k),\sigma (s^j_k,y^j;v^1_k) \big)=(4n^j_1,4n^j_2,4n^j_3)+o(1)\in \mathbf{V}+o(1).
   \end{equation}

  By Lemma \ref{lem:02} and Lemma \ref{lem:05}, we obtain that: $(i)$ $v^1_k$ has fast decay in $B_{N+1}\setminus \cup_{j=0}^m B_{r_j}(y^j)$ and $$\sigma (N+1, 0; v^1_k)=\sum_{j=0}^m4n_1^j+o(1);$$ $(ii)$ There exists $R_k\to +\infty$ such that $R_k=o(1)\frac{\varepsilon^1_k}{\tau_k}$ and $v^1_k$ has fast decay in $B_{R_k}\setminus B_{N+1}$ and
  \begin{equation}\label{equ:13}
    \sigma (R_k,0;v^1_k)-\sigma (N+1,0;v^1_k)=o(1).
  \end{equation}

Now we have to consider following two cases:

\

\noindent\textbf{Case 1.}  $v^2_k,v^3_k$ have fast decay on $\partial B_1$.

\

Then by Lemma \ref{lem:02} and Proposition \ref{prop:01}, we get that $v^2_k,v^3_k$ have fast decay in $B_{N+1}\setminus \cup_{j=0}^m B_{r_j}(y^j)$ and
\begin{align*}
 \sigma(r_j,y^j;v^2_k)=4n^j_2+o(1),\ \ \sigma(r_j,y^j;v^3_k)=4n^j_3+o(1),\ \ j=0,...,m.
\end{align*}
Thus, $$\sigma(N+1, 0; v^i_k)=\sum_{j=0}^m4n_i^j+o(1),\ \ i=1,2,3.$$  Now, we know that  $u^1_k,u^2_k,u^3_k$ have fast decay on $\partial B_{(N+1)\tau_k}$ and $$\big(\sigma_1((N+1)\tau_k), \sigma_2((N+1)\tau_k), \sigma_3((N+1)\tau_k)\big)=\big(\sum_{j=0}^m4n_1^j,\sum_{j=0}^m4n_2^j,\sum_{j=0}^m4n_3^j\big)+o(1)\in\mathbf{V}+o(1).$$ Then the conclusions of the proposition follow from Proposition \ref{prop:01}.

\

\noindent\textbf{Case 2.}  $v^2_k$ or $v^3_k$ has slow decay on $\partial B_1$.

\

By Lemma \ref{lem:01}, we know $$\max_{i=1,2,3}u^i_k(x)+2\log \dist(x,\Sigma_k)\leq C,\ \ \forall \ x\in B_1.$$ Then it is easy to see that  $$\max_{i=1,2,3}v^i_k(x)\leq C(\delta),\ \ v^1_k(x)\to -\infty, \ \ \forall \ x\in B_{R_k}\setminus \cup_{j=0}^mB_{\delta}(y^j),$$ for any fixed $\delta>0$.
Since  $v^2_k$ or $v^3_k$ has slow decay on $\partial B_1$ and  $v^i_k,\ i=1,2,3$ also satisfy system  \eqref{equ:03}-\eqref{equ:05}, by the standard elliptic theory and a similar argument as in {Step 3} in Proposition \ref{prop:01}, we have  $v^1_k(x)\to -\infty,$ locally uniformly in $\R^2\setminus \{0,y^1,...,y^m\}$ and one of the following alternatives holds:
\begin{itemize}
  \item[(a)] $v^2_k\to -\infty$  locally uniformly in $\R^2\setminus \{0,y^1,...,y^m\}$ and $v^3_k\to v^3$ in $C^2_{loc}(\R^2\setminus \{0,y^1,...,y^m\})$  where $w^3$ satisfies $$-\Delta v^3=-\sum_{j=0}^m4\pi (n^j_1+n^j_2-2n^j_3)\delta_{y^j}+e^{v^3}\ \ in\ \R^2,$$ where  $\delta_{y^j}$ is the Dirac measure at the point $y^j$.
  \item[(b)] $v^3_k\to -\infty$  locally uniformly in $\R^2\setminus \{0,y^1,...,y^m\}$ and $v^2_k\to v^2$ in $C^2_{loc}(\R^2\setminus \{0,y^1,...,y^m\})$  where $w^2$ satisfies $$-\Delta v^2=-\sum_{j=0}^m8\pi (n^j_3-n^j_2)\delta_{y^j}+e^{v^2}\ \ in\ \R^2.$$
  \item[(c)] $(v^2_k,v^3_k)\to (v^2, v^3)$ in $C^2_{loc}(\R^2\setminus \{0,y^1,...,y^m\})$  where $(v^2,v^3)$ satisfies
  \begin{align*}
    \begin{cases}
    -\Delta v^2&=-\sum_{j=0}^m8\pi (n^j_3-n^j_2)\delta_{y^j}+e^{v^2}-e^{v^3},\\
      -\Delta v^3&=-\sum_{j=0}^m4\pi (n^j_1+n^j_2-2n^j_3)\delta_{y^j}-\frac{1}{2}e^{v^2}+e^{v^3},
    \end{cases}in\ \R^2.
  \end{align*}
\end{itemize}

We now claim that the constants in $(c)$ satisfy $$2(n^j_3-n^j_2)>-1,\ \ n^j_1+n^j_2-2n^j_3>-1,\ \ j=0,1,...,m.$$ First, it is easy to see that $$2(n^j_3-n^j_2)>-1, \ \ j=0,1,...,m.$$  Otherwise there exits $j\in \{0,1,...,m\}$ such that $2(n^j_3-n^j_2)\leq -1$. Then $2(n^j_3-n^j_2)\leq -2$  and $$r\frac{d}{dr}\overline{v}^2_k(r)=4(n^j_3-n^j_2)+o(1)\leq -4+o(1),\ \ r=s_k^j,$$  where $s_k^j=o(1)r_j$ is defined in \eqref{equation:24}. Then by the proof of Proposition \ref{prop:01}, we get that $v^2_k$ has fast decay in $B_{r_j}(x^j)\setminus B_{s^j_k}(x^j)$ which implies $v^2_k\to -\infty$  locally uniformly in $\R^2\setminus \{0,y^1,...,y^m\}$. This is a contradiction.

Now we show $$n^j_1+n^j_2-2n^j_3>-1,\ \ j=0,1,...,m.$$ In fact, if not, then there exists $j\in \{0,1,...,m\}$ such that $n^j_1+n^j_2-2n^j_3\leq -1$. Then there must be $$n^j_1+n^j_2-2n^j_3= -1.$$ Otherwise we have $n^j_1+n^j_2-2n^j_3\leq -2$ and $$r\frac{d}{dr}\overline{v}^3_k(r)=2(n^j_1+n^j_2-2n^j_3)+o(1)\leq -4+o(1),\ \ r=s_k^j,$$  where $s_k^j=o(1)r_j$ is defined in \eqref{equation:24}. Similarly, we know that this is also a contradiction.

By \eqref{equation:12}, \eqref{equation:13} and  Lemma \ref{lem:04}, we know that among the following constants $$n_3^j-n_1^j,\ \  n_3^j-n_2^j,\ \  \frac{1}{2}(n_1^j+n_2^j)-n_3^j,$$ there exist one component less than $-1$ and one component bigger than $\frac{1}{4}$. Since $n^j_1+n^j_2-2n^j_3= -1$, we know $n_3^j-n_1^j\leq -1,\ n_3^j-n_2^j\geq\frac{1}{4}$ or $n_3^j-n_2^j\leq -1,\ n_3^j-n_1^j\geq\frac{1}{4}$. Without loss of generality, we assume the first case holds. Then $$n_3^j-n_1^j\leq -1,\ n_3^j-n_2^j\geq 1.$$ By the Pohozaev equality \ref{lem:Pohozaev} $$(n_3^j-n_1^j)^2+(n_3^j-n_2^j)^2=(n_1^j+n_2^j+2n^j_3)$$ and the fact $n^j_1+n^j_2-2n^j_3= -1$, we get $$(n_3^j-n_1^j)(n_3^j-n_1^j-1)=2n^j_3-1.$$ This is a contradiction since the left side is even while the right side is odd.

Similarly, one can prove that the constants in $(a)$ and $(b)$ satisfy $$2(n_3^j-n_2^j)>-1,\ \ n^j_1+n^j_2-2n^j_3> -1.$$

By the classification results in \cite{Lin-Yang-Zhong}, for above three cases,  we have $$\big(\int_{\R^2}e^{v^2}dx,\int_{\R^2}e^{v^3}dx\big)=(8\pi n'_2, 8\pi n'_3),\ \ n'_1, \ n'_2\in\mathds{N}\cup\{0\}.$$ By the argument as in
{Step 3} in proposition \ref{prop:01}, we can find a sequence of numbers $R^1_k\to +\infty$  such that $R^1_k=o(1)R_k$ where $R_k$ is given in \eqref{equ:13}, $v^1_k,v^2_k,v^3_k$ have fast decay on $\partial B_{R^1_k}$ and
\begin{align*}
\sigma^1_k(R^1_k\tau_k)=\sum_{j=0}^m4n_1^j+o(1), \ \sigma^2_k(R^1_k\tau_k)=\sum_{j=0}^m4n_2^j+4n_2'+o(1), \ \sigma^3_k(R^1_k\tau_k)=\sum_{j=0}^m4n_3^j+4n_3'+o(1)
\end{align*} and $$\big(\sigma^1_k(R^1_k\tau_k),\sigma^2_k(R^1_k\tau_k),\sigma^3_k(R^1_k\tau_k)\big)\in \mathbf{V}+o(1).$$ Then the conclusions of the proposition follow from Proposition \ref{prop:01}.

\end{proof}

\

\noindent\textit{Proof of Theorem \ref{thm:main}.} Since now we have proved Theorem \ref{thm:01} and Proposition \ref{prop:02}, then by applying a global Pohozaev identity, the conclusion of  Theorem \ref{thm:main} follows from more or less  standard argument
 which has already used in several papers, see for example \cite{Jevnikar-Wei-Yang}.    We omit the details here.

\

\section{ Affine Toda system}\label{sec_last}

Another closely related completely integrable system is the following
\begin{equation}\begin{array}{rcl}\label{eq_a1}
-\Delta \theta &=& e^{2\theta}-e^{-\theta} \cosh 3\phi\\
-\Delta \phi &=& e^{-\theta} \sinh 3\phi
\end{array}
\end{equation}
See \cite{FG}. When $\phi=0$, it reduces to a scalar equation
\begin{eqnarray}\label{eq_a2}
-\Delta \theta &=& e^{2\theta}-e^{-\theta},
\end{eqnarray}
which is usually called Tzitz\'{e}ica equation \cite{Tzitzeca-2}.
Its  blow-up analysis was studied recently by Jevniker-Yang \cite{Jevnikar-Yang}. Let
\begin{eqnarray*}
u_1&=& -\theta+3\phi \\
u_2 &=& -\theta-3\phi \\
u_3 & = & 2\theta.
\end{eqnarray*}
It is clear  to see that the above system is equivalent to
\begin{equation}\begin{array}{rcl}\label{eq_a3}
-\Delta u_1  &=& 2 e^{u_1}-e^{u_2}-e^{u_3}  \\
-\Delta u_2  &=&-e^{u_1}+2e^{u_2}-e^{u_3}  \\
-\Delta u_3  &=&-e^{u_1}-e^{u_2}+2e^{u_3} \\
u_1+u_2+u_3 &=& 0.
\end{array}
\end{equation}
It is usually  the affine Toda system for  $SU(4)$. Our method works also for this system. In fact, the argument would become slightly easy, because due to  the symmetry of the system, the blow-up has only two different cases,
either one component or two components blow up. For the former one obtains a solution to the Liouville equation as a limit, while for the latter case a solution to the  SU(3) Toda system. Both are the so-called  open Toda systems mentioned in the Introduction. In order to give a similar form as in Theorem \ref{thm:main} we consider its rescaled system
\begin{equation}\begin{array}{rcl}\label{eq_a4}
-\Delta u_1  &=&  e^{u_1}- \frac 12 e^{u_2}- \frac 12 e^{u_3}  \\
-\Delta u_2  &=&- \frac 12 e^{u_1}+e^{u_2}- \frac 12 e^{u_3}  \\
-\Delta u_3  &=&-\frac 12e ^{u_1}- \frac 12 e^{u_2}+e^{u_3} \\
u_1+u_2+u_3 &=& 0.
\end{array}
\end{equation}

\begin{thm}
	Theorem \ref{thm:main} holds for system \eqref{eq_a4}. Namely, under similar assumptions,
	the local mass $\s_i$ ($i=1,2,3$) are a multiple of $4$, which satisfy the corresponding   Pohozaev identity
	\[
	(\sigma_1-\sigma_2)^2 + (\sigma_2-\sigma_3)^2+ (\sigma_3-\sigma_1)^2=12 (\sigma_1
	+\sigma_2+\sigma_2).\]
\end{thm}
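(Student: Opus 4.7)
The strategy is to repeat the entire machinery of Sections \ref{sec:basic-lemmas}--\ref{sec:main-theorem} for the system \eqref{eq_a4}, exploiting the $\mathbb{Z}/3$ symmetry which simplifies the bubble classification. I would first establish the analog of the selection Lemma \ref{lem:01}. Because of the constraint $u_1+u_2+u_3=0$, at a sequence of blow-up points exactly one of three possibilities occurs (up to permutation): (i) one component goes to $+\infty$ and the other two stay bounded/go to $-\infty$, giving a Liouville limit $-\Delta v = e^v$ after rescaling; (ii) two components go to $+\infty$ with bounded difference, giving a limit of the $SU(3)$ open Toda system, whose entire solutions were classified by Jost--Wang; (iii) mixed cases that reduce to (i). The required local energies of the Liouville limit and of the $SU(3)$ Toda bubble are $(4\pi\cdot 2,0,0)$-type and $(16\pi,24\pi)$-type respectively, so $\sigma_i \in 4\mathbb{N}_0$ in each elementary bubble.

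Next I would derive the oscillation estimate exactly as in Lemma \ref{lem:02}: the proof uses only Green's representation, the growth bound $u_k^i(x)+2\log\mathrm{dist}(x,\Sigma_k)\le C$ from the selection lemma, and integrability of $e^{u_k^i}$, none of which depend on the specific coefficient matrix. The local Pohozaev identity is then obtained by multiplying each equation by $x\cdot \nabla u_k^i$, integrating on $B_{r_k}$, summing, and using $u_1+u_2+u_3=0$ to eliminate mixed boundary terms; together with the fast-decay hypothesis one gets, for $r=r_k$,
\begin{equation*}
(\sigma_k^1-\sigma_k^2)^2+(\sigma_k^2-\sigma_k^3)^2+(\sigma_k^3-\sigma_k^1)^2=12(\sigma_k^1+\sigma_k^2+\sigma_k^3)+o(1),
\end{equation*}
which matches the claimed final Pohozaev relation.

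With these three tools I would adapt Lemma \ref{lem:04}, Lemma \ref{lem:03} and Proposition \ref{prop:01} essentially verbatim. The role of the set $\mathbf{V}$ is played here by
\begin{equation*}
\widetilde{\mathbf{V}}:=\bigl\{(4n_1,4n_2,4n_3):n_i\in\mathbb{N}_0,\ \textstyle\sum_{i<j}(n_i-n_j)^2=3\sum_i n_i\bigr\}\setminus\{(0,0,0)\}.
\end{equation*}
The gradient formulas $r\,\tfrac{d}{dr}\overline{u}_k^i(r)$ are now linear combinations $2n_i-n_j-n_l$, and the symmetric analog of Lemma \ref{lem:04} asserts that outside of $\widetilde{\mathbf{V}}$ at least one of the $r\,\tfrac{d}{dr}\overline{u}_k^i$ is $\le -2-a$ while another is $\ge b>0$; the Pohozaev identity then forces the exceptional radius argument to close. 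The ``singular bubble'' step (Step~3 of Proposition \ref{prop:01}) requires the classification of entire solutions to the Liouville equation with Dirac source (Prajapat--Tarantello / Chen--Li) and to the $SU(3)$ Toda system with Dirac sources at the origin (Lin--Wei--Ye). Both are available and give local masses in $4\mathbb{N}_0$.

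The main obstacle I expect is verifying that, in the singular bubble step, the non-negativity of the Dirac coefficients that appear in the blow-up limit follows from the sign information on $\tfrac{d}{dr}\overline{u}_k^i$ at the inner scale. In the SU(4) case there are more sign combinations than in \eqref{equ:01}-\eqref{equ:02}; however the symmetric Pohozaev identity together with Lemma \ref{lem:04} (symmetrized) rules out the bad parity/sign cases exactly as in the proof of Proposition \ref{prop:02}, Case 2. Once this is done, the iteration on bubbling annuli terminates by the global energy bound, and a standard grouping argument combined with the global Pohozaev identity yields Theorem \ref{thm:main} for \eqref{eq_a4}, with $(\sigma_1,\sigma_2,\sigma_3)\in\widetilde{\mathbf{V}}$, concluding the proof.
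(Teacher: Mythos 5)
Your overall strategy is exactly the one the paper intends: the paper offers no detailed proof of this theorem, only the remark that the machinery of Sections \ref{sec:basic-lemmas}--\ref{sec:main-theorem} carries over, with the blow-up limits now being either the Liouville equation (one component survives) or the open $SU(3)$ Toda system (two components survive, forced by the permutation symmetry and the constraint $u_1+u_2+u_3=0$), plus their singular versions for the secondary bubbles. Your outline of the selection lemma, the oscillation estimate, the symmetric Pohozaev identity and the iteration on annuli is the right skeleton, and is in fact more explicit than what the paper provides.

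There is, however, a concrete quantitative error that breaks the argument. If you actually carry out the Pohozaev computation for \eqref{eq_a4} --- multiply the $i$-th equation by $x\cdot\nabla u_i$, sum over $i$, and use $\sum_i u_i=0$ so that the cross terms collapse to $\tfrac12\sum_j\int e^{u_j}\,x\cdot\nabla u_j$ --- the right-hand side becomes $\tfrac32\sum_i\big(s\int_{\partial B_s}e^{u_i}-2\int_{B_s}e^{u_i}\big)$, and with $\nabla u_i\sim -\tfrac{x}{|x|^2}\big(\tfrac32\sigma_i-\tfrac12\sum_j\sigma_j\big)$ on the boundary one obtains
\[
(\sigma_1-\sigma_2)^2+(\sigma_2-\sigma_3)^2+(\sigma_3-\sigma_1)^2=8\,(\sigma_1+\sigma_2+\sigma_3)+o(1),
\]
with constant $8$, not $12$. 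You can test this against the elementary bubbles your own selection lemma produces: the Liouville bubble gives $(\sigma_1,\sigma_2,\sigma_3)=(4,0,0)$ (since $-\Delta v=e^{v}$ has mass $8\pi$), and the two-component bubble solves the symmetric system $-\Delta v_1=e^{v_1}-\tfrac12 e^{v_2}$, $-\Delta v_2=-\tfrac12 e^{v_1}+e^{v_2}$, which after the rescaling $x\mapsto\sqrt2\,x$ is the $SU(3)$ Toda system and carries $\int e^{v_1}=\int e^{v_2}=16\pi$, i.e.\ $(8,8,0)$ --- not the ``$(16\pi,24\pi)$-type'' you quote, which belongs to the non-symmetric system \eqref{System2} arising in the main part of the paper. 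Both $(4,0,0)$ and $(8,8,0)$ satisfy the identity with constant $8$ and violate it with $12$. Consequently your set $\widetilde{\mathbf{V}}$, defined by $\sum_{i<j}(n_i-n_j)^2=3\sum_i n_i$, contains neither $(1,0,0)$ nor $(2,2,0)$; the correct condition is $\sum_{i<j}(n_i-n_j)^2=2\sum_i n_i$. As written, the analog of Proposition \ref{prop:01} cannot even start, because the initial bubble does not lie in $\widetilde{\mathbf{V}}+o(1)$. (The constant $12$ also appears in the paper's own statement of the theorem, together with the misprint $\sigma_1+\sigma_2+\sigma_2$, so you have inherited an error from the source; but your assertion that your derivation ``matches the claimed final Pohozaev relation'' is precisely where the proof goes wrong.) With the constant corrected to $8$ and $\widetilde{\mathbf{V}}$ redefined accordingly, the remainder of your plan goes through along the lines of the paper.
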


It is  interesting to see that system \eqref{eq:03}-\eqref{eq:06}  can be embedded  in  the affine Toda system for $SU(N+1)$ with $N=5$, i.e., system \eqref{eq_0}
with an $N\times N$ coefficient matrix $A$

  \[A:=
  \left(\begin{array}{rrrrrr}
  2 & -1 & 0 & \cdots & 0 & -1\cr
  -1& 2 &-1& 0 & \cdots & 0\cr
  0&-1&2&-1& \cdots & 0\cr
  \cdots&\cdots&\cdots&\cdots&\cdots&\cdots\cr
  0 &\cdots&\cdots& -1&2&-1\cr
  -1& 0 &\cdots & 0 &-1&2\cr\end{array}\right).\nonumber
  \]
Hence it would be interesting to consider the general case.
It is easy to observe that all blow-up limits of an affine  Toda system are solutions to various open Toda systems. However, for the affine Toda system for $SU(N+1)$ with $N\ge 4$, the argument given here is not yet enough, since
it is more than $3$ unknown functions.
We will consider this problem later.

\section{Appendix}\label{sec:appendix}

In this Appendix we recall two classification results.

\begin{thm}[Classification theorem]\label{thm:classification-1}
Suppose $(u,v)$ is a solution of
\begin{align}\label{equ:09}
\begin{cases}
  -\Delta u&=e^u-e^v, \\
  -\Delta v&=-\frac{1}{2}e^u+e^v,
\end{cases}\end{align}in $\R^2$, with $\int_{\R^2}e^udx+\int_{\R^2}e^vdx<\infty.$ Then we have $$\int_{\R^2}e^udx=32\pi,\ \ \int_{\R^2}e^vdx=24\pi $$ and $$u(x)=-4\log |x|+O(1),\ \ v(x)=-4\log |x|+O(1)\ as \ |x|\to\infty.$$
\end{thm}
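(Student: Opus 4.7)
The plan has four stages: asymptotics at infinity, a Pohozaev identity, radial symmetry, and ODE reduction.

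First, by a Brezis-Merle type argument, the finiteness $\int_{\R^2}(e^u+e^v)<\infty$ rules out concentration and shows that $u, v$ are bounded above. The Green's representation then yields the asymptotics
\[
u(x) = -\alpha_u \log|x| + O(1), \qquad v(x) = -\alpha_v \log|x| + O(1)
\]
as $|x|\to\infty$, with $\alpha_u = \sigma_u - \sigma_v$, $\alpha_v = -\tfrac12 \sigma_u + \sigma_v$, where $\sigma_w := \tfrac1{2\pi}\int_{\R^2} e^w$. Integrability of $e^u$ and $e^v$ forces $\alpha_u, \alpha_v > 2$.

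Next, I would test the two equations against $x\cdot\nabla u$ and $x\cdot\nabla v$, integrate on $B_R$, combine, and send $R\to\infty$; the boundary terms decay by the first step, producing the Pohozaev identity
\[
(\sigma_u-\sigma_v)^2 + \sigma_v^2 = 4(\sigma_u + 2\sigma_v).
\]
This is one relation in two unknowns, so not yet enough to pin down $(\sigma_u,\sigma_v)$.

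The hard step is to prove that $(u,v)$ is rotationally symmetric about some point, which we translate to $0$. The difficulty is that the source $(e^u-e^v,\, -\tfrac12 e^u + e^v)$ has negative off-diagonal linearization, so the system is not cooperative and standard moving-plane arguments for cooperative systems do not apply directly. My plan is to take the Kelvin transforms of $u, v$, turning the problem into one on a punctured disk with prescribed singularity at the origin (governed by $\alpha_u,\alpha_v$ from the first step), and then to run moving planes from infinity in the spirit of Chen-Li-Ou, using as auxiliary test functions suitable linear combinations of $u$ and $v$ whose coupled inequalities do admit a maximum principle. A potentially cleaner alternative would exploit the integrable structure (a Lax pair / holomorphic data formulation, in the spirit of Jost-Wang for $SU(3)$ Toda). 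This is the step I expect to be the main obstacle.

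Once radial symmetry is established, the system becomes a pair of ODEs in $r=|x|$. Passing to $t=\log r$ and observing that the shift $U = u-\log 2$, $V = v-\log 2$ converts the equations into (a rescaling of) the open $B_2$ Toda system, one integrates explicitly using classical Toda theory. The regularity of $(u,v)$ at $0$ pins down the integration constants, and matching to the decay from the first step forces $\alpha_u = \alpha_v = 4$. This gives $\sigma_u = 16$, $\sigma_v = 12$, i.e.\ $\int_{\R^2} e^u = 32\pi$ and $\int_{\R^2} e^v = 24\pi$, consistent with the Pohozaev identity ($(16-12)^2 + 12^2 = 160 = 4(16+24)$), and the expansions $u, v = -4\log|x| + O(1)$ follow immediately.
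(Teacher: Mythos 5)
Your first two stages are fine and standard: the Brezis--Merle/Green's-representation argument does give $u=-\alpha_u\log|x|+O(1)$, $v=-\alpha_v\log|x|+O(1)$ with $\alpha_u=\sigma_u-\sigma_v$, $\alpha_v=-\tfrac12\sigma_u+\sigma_v$, both $>2$, and your Pohozaev identity $(\sigma_u-\sigma_v)^2+\sigma_v^2=4(\sigma_u+2\sigma_v)$ is correct (it is the specialization of \eqref{equation:pohozaev} with $\sigma_2=0$). But the step you flag as the main obstacle --- radial symmetry --- is not merely hard, it is false, and this breaks the whole strategy. After your shift $U=u-\log 2$, $V=v-\log 2$ the system is an open Toda system of rank two ($B_2$/$C_2$ Cartan matrix), and for open Toda systems of rank $\ge 2$ the finite-energy entire solutions form a positive-dimensional family generated from holomorphic data by a group action (Jost--Wang \cite{JW0}, Lin--Wei--Ye \cite{Lin-Wei-Ye}); only a lower-dimensional subfamily is radially symmetric, in sharp contrast to the scalar Liouville equation. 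So no moving-plane or Kelvin-transform argument can establish rotational symmetry, and the subsequent ODE reduction cannot be the route to the mass quantization. The correct mechanism is that \emph{every} solution in the family has the same total masses, which is exactly what the classification theorems deliver.

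The paper's proof is a two-line reduction exploiting this: set $\tilde u(x)=u(\sqrt2\,x)$, $\tilde v(x)=v(\sqrt2\,x)$, so that $-\Delta\tilde u=2e^{\tilde u}-2e^{\tilde v}$, $-\Delta\tilde v=-e^{\tilde u}+2e^{\tilde v}$, and observe that this is the $SU(4)$ open Toda system restricted to the symmetric slice $u_1=u_3=\tilde v$, $u_2=\tilde u$. The classification of \cite{JW0} then gives $\int e^{\tilde u}=16\pi$, $\int e^{\tilde v}=12\pi$ and the $-4\log|x|$ asymptotics for all solutions, radial or not; undoing the rescaling doubles the integrals to $32\pi$ and $24\pi$. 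If you want to salvage your plan, replace the symmetry-plus-ODE stages with this embedding (or with a direct appeal to the $B_2$ classification in \cite{Lin-Yang-Zhong}); your asymptotics and Pohozaev identity then serve only as a consistency check, not as the engine of the proof.
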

\begin{proof}
Let $\tilde{u}(x)=u(\sqrt{2}x),\ \tilde{v}(x)=v(\sqrt{2}x)$. Then
\begin{align*}
  -\Delta\tilde{u} &=2e^{\tilde{u}}-2e^{\tilde{v}}, \\
  -\Delta \tilde{v}&=-e^{\tilde{u}}+2e^{\tilde{v}},
\end{align*} in $\R^2$, with $\int_{\R^2}e^{\tilde{u}}dx+\int_{\R^2}e^{\tilde{v}}dx<\infty.$
One can easily embed the above system into the open Toda system for $SU(4)$  with $\tilde v= u_1=u_3$ and $\tilde u=u_2$. Then
 the classification theorem proved in \cite{JW0} implies
$$\int_{\R^2}e^{\tilde{u}}dx=16\pi,\ \ \int_{\R^2}e^{\tilde{v}}dx=12\pi $$ and $$\tilde{u}(x)=-4\log |x|+O(1),\ \ \tilde{v}(x)=-4\log |x|+O(1)\ as \ |x|\to\infty.$$ Then the conclusion of the theorem follows immediately.
\end{proof}

\

\begin{thm}[Classification theorem]\label{thm:classification-2}
Suppose $(u,v)$ is a solution of
\begin{align}
\begin{cases}
  -\Delta u&=e^u-e^v-\sum_{j=1}^m4\pi \alpha_j\delta_{p_j}, \\
  -\Delta v&=-\frac{1}{2}e^u+e^v-\sum_{j=1}^m4\pi \beta_j\delta_{p_j},
\end{cases}\end{align}in $\R^2$, with $\int_{\R^2}e^udx+\int_{\R^2}e^vdx<\infty$, where  $\alpha_j,\ \beta_j\in \mathds{N}\cup\{0\},\ j=1,...,m$. Then we have $$\frac{1}{2\pi}\int_{\R^2}e^udx=4n_1,\ \ \frac{1}{2\pi}\int_{\R^2}e^vdx=4n_2 ,$$ where $n_1,\ n_2\in \mathds{N}$.
\end{thm}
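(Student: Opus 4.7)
\textbf{Proof proposal for Theorem \ref{thm:classification-2}.}

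The plan is to mimic the strategy of Theorem \ref{thm:classification-1} and reduce the problem to the singular $SU(4)$ open Toda system, for which a classification (and in particular the quantization of the total masses) is available in the literature, e.g.\ Lin--Wei--Ye \cite{Lin-Wei-Ye}. The whole argument splits into the following steps.

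First, I would rescale by a factor $\sqrt{2}$. Set $\tilde u(x):=u(\sqrt{2}\,x)$ and $\tilde v(x):=v(\sqrt{2}\,x)$. Using the change-of-variables rule $\delta_{p_j}(\sqrt{2}\,x)=\tfrac12\,\delta_{\tilde p_j}(x)$ with $\tilde p_j:=p_j/\sqrt{2}$, the pair $(\tilde u,\tilde v)$ solves
\begin{align*}
-\Delta \tilde u&=2e^{\tilde u}-2e^{\tilde v}-\sum_{j=1}^m 4\pi\alpha_j\,\delta_{\tilde p_j},\\
-\Delta \tilde v&=-e^{\tilde u}+2e^{\tilde v}-\sum_{j=1}^m 4\pi\beta_j\,\delta_{\tilde p_j},
\end{align*}
in $\R^2$, with total mass still finite. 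The integer strengths $\alpha_j,\beta_j$ are unchanged by the rescaling, and the new mass integrals satisfy $\int_{\R^2}e^{u}\,dx=2\int_{\R^2}e^{\tilde u}\,dx$ and $\int_{\R^2}e^{v}\,dx=2\int_{\R^2}e^{\tilde v}\,dx$.

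Next I would embed this rescaled system into the singular open $SU(4)$ Toda system by taking $U_1=U_3:=\tilde v$ and $U_2:=\tilde u$; a direct check shows $(U_1,U_2,U_3)$ solves
\begin{align*}
-\Delta U_1&=2e^{U_1}-e^{U_2}-\sum_j 4\pi\beta_j\,\delta_{\tilde p_j},\\
-\Delta U_2&=-e^{U_1}+2e^{U_2}-e^{U_3}-\sum_j 4\pi\alpha_j\,\delta_{\tilde p_j},\\
-\Delta U_3&=-e^{U_2}+2e^{U_3}-\sum_j 4\pi\beta_j\,\delta_{\tilde p_j},
\end{align*}
which is precisely the singular $SU(4)$ open Toda system with integer singularity data $(\beta_j,\alpha_j,\beta_j)\in(\N\cup\{0\})^3$. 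Note that the symmetry $U_1=U_3$ is compatible with the symmetric singular coefficients, and the total energies $\int e^{U_i}$ remain finite.

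Now I would invoke the classification of entire solutions to the singular $SU(N+1)$ open Toda system with non-negative integer sources (Lin--Wei--Ye \cite{Lin-Wei-Ye}; see also the recent work of Lin--Yang--Zhong cited in the paper). For such solutions, each local mass $\tilde\sigma_i:=\tfrac1{2\pi}\int_{\R^2}e^{U_i}\,dx$ lies in the appropriate quantized set; in particular each $\tilde\sigma_i$ is an even positive integer, so $\tilde\sigma_1=\tilde\sigma_3=2k_1$ and $\tilde\sigma_2=2k_2$ for some $k_1,k_2\in\N$. Translating back via the rescaling,
\[\frac{1}{2\pi}\int_{\R^2}e^{u}\,dx=2\tilde\sigma_2=4k_2, \qquad \frac{1}{2\pi}\int_{\R^2}e^{v}\,dx=2\tilde\sigma_1=4k_1,\]
which gives exactly the claimed quantization with $n_1=k_2$ and $n_2=k_1$.

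The main obstacle I expect is verifying the precise quantization pattern stated in the cited classification and checking that the $U_1=U_3$ symmetric branch, together with the symmetric singular coefficients $(\beta_j,\alpha_j,\beta_j)$, is covered by (and consistent with) that classification, so that the resulting $\tilde\sigma_i$ are indeed even integers rather than merely rational multiples of $\pi$. Everything else is essentially bookkeeping of the change of variables and the Dirac-delta rescaling.
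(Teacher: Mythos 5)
Your proposal is essentially the paper's own argument: rescale by $\sqrt{2}$ and quote a known quantization result for the resulting singular Toda system; your computation of how the Dirac masses transform and the final bookkeeping $\frac{1}{2\pi}\int e^{u}=2\tilde\sigma$ agree with the paper (the paper's displayed coefficient $8\pi\alpha_j\delta_{p_j}$ versus your $4\pi\alpha_j\delta_{\tilde p_j}$ is only a matter of whether one relabels the rescaled delta). The one genuine difference is the reference you lean on: you embed the rescaled system into the singular open $SU(4)$ Toda system via $U_1=U_3=\tilde v$, $U_2=\tilde u$ and invoke the Lin--Wei--Ye classification, whereas the paper applies Corollary 2.3 of Lin--Yang--Zhong directly to the rescaled two-component system (whose coefficient matrix $\bigl(\begin{smallmatrix}2&-2\\-1&2\end{smallmatrix}\bigr)$ is the Cartan matrix of $B_2$, covered by that paper). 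This matters: the Lin--Wei--Ye classification is formulated for a single singular source at the origin, so for general $m\ge 2$ points $p_1,\dots,p_m$ it does not directly yield the quantization you need; the multiple-source quantization with nonnegative integer strengths is exactly what Lin--Yang--Zhong provide, and is what the paper actually cites. So the obstacle you flag at the end is real, and the fix is to replace the Lin--Wei--Ye citation by the Lin--Yang--Zhong quantization (either applied to your $SU(4)$ embedding or, more economically as in the paper, to the rescaled $B_2$ system itself, which also spares you the need to check that the symmetric branch $U_1=U_3$ is compatible with the classification and that the resulting masses $\tilde\sigma_i$ are even integers $2n_i$ with $n_i\in\mathbb{N}$).
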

\begin{proof} As in Theorem \ref{thm:classification-1}, let $\tilde{u}(x)=u(\sqrt{2}x),\ \tilde{v}(x)=v(\sqrt{2}x)$.
It is clear that we have
\begin{align*}
\begin{cases}
  -\Delta \tilde{u} &=2e^{\tilde{u}}-2e^{\tilde{v}}-\sum_{j=1}^m8\pi \alpha_j\delta_{p_j}, \\
  -\Delta \tilde{v}&=-e^{\tilde{u}}+2e^{\tilde{v}}-\sum_{j=1}^m8\pi \beta_j\delta_{p_j},
\end{cases}\end{align*} in $\R^2$, with $\int_{\R^2}e^{\tilde{u}}dx+\int_{\R^2}e^{\tilde{v}}dx<\infty.$ By the classification result Corollary 2.3 in \cite{Lin-Yang-Zhong}, we have $$\frac{1}{2\pi}\int_{\R^2}e^{\tilde{u}}dx=2n_1,\ \ \frac{1}{2\pi}\int_{\R^2}e^{\tilde{u}}dx=2n_2 ,$$ where $n_1,\ n_2\in \mathds{N}$. Thus  $$\frac{1}{2\pi}\int_{\R^2}e^udx=4n_1,\ \ \frac{1}{2\pi}\int_{\R^2}e^vdx=4n_2 .$$
\end{proof}

\


\providecommand{\bysame}{\leavevmode\hbox to3em{\hrulefill}\thinspace}
\providecommand{\MR}{\relax\ifhmode\unskip\space\fi MR }
\providecommand{\MRhref}[2]{%
  \href{http://www.ams.org/mathscinet-getitem?mr=#1}{#2}
}
\providecommand{\href}[2]{#2}

\end{document}